\documentclass{amsart}
\usepackage{graphicx}
\usepackage[bookmarksnumbered,colorlinks,plainpages]{hyperref}
\usepackage{extarrows}
\usepackage{float}
\usepackage{amsthm}
\usepackage[all,2cell]{xy}
\usepackage{amsfonts}
\usepackage{amsmath}
\usepackage{amssymb}
\usepackage{xypic,leqno,amscd,amssymb,pstricks,latexsym,amsbsy,xypic,mathrsfs,verbatim}
\usepackage{multirow}
\usepackage{booktabs}
\usepackage{makecell}
\usepackage{cases}
\usepackage{appendix}
\usepackage{graphicx,colortbl}
\usepackage{tikz-cd}
\usepackage{etex}
\usepackage{mathtools}
\usepackage[bookmarksnumbered,colorlinks,plainpages]{hyperref}
\hypersetup{
	colorlinks=true,
	citecolor=blue
}
\newtheorem{theorem}{Theorem}[section]

\newtheorem{corollary}[theorem]{Corollary}
\newtheorem{lemma}[theorem]{Lemma}
\newtheorem{proposition}[theorem]{Proposition}
\newtheorem{definition-proposition}[theorem]{Definition-Proposition}

\theoremstyle{definition}
\newtheorem{definition}[theorem]{Definition}

\newtheorem{remark}[theorem]{Remark}
\newtheorem{example}[theorem]{Example}

\numberwithin{equation}{section}
\newtheorem{mainthm}{Theorem}

\newcommand{\OO}{{\mathcal O}}

\renewcommand{\L}{\mathbb{L}}
\newcommand{\X}{\mathbb{X}}

\newcommand{\Z}{\mathbb{Z}}

\renewcommand{\c}{\vec c}
\newcommand{\vdelta}{\vec{\delta}}
\newcommand{\y}{\vec y}
\newcommand{\x}{\vec x}

\newcommand{\w}{\vec{\omega}}
\newcommand{\z}{\vec z}

\newcommand{\cut}{\ar@{-}@[|(5)]}
\newcommand{\mr}{\hbox{\boldmath$\cdot$}}

\newcommand{\op}{\operatorname{op}\nolimits}
\newcommand{\Hom}{\operatorname{Hom}\nolimits}

\newcommand{\End}{\operatorname{End}\nolimits}
\newcommand{\Ext}{\operatorname{Ext}\nolimits}
\newcommand{\nExt}{\operatorname{nExt}\nolimits}

\renewcommand{\Im}{\operatorname{Im}\nolimits}
\renewcommand{\ker}{\operatorname{Ker}\nolimits}
\newcommand{\coker}{\operatorname{Cok}\nolimits}

\newcommand{\gl}{\operatorname{gl}.\operatorname{dim}\nolimits}

\newcommand{\RHom}{\mathbf{R}\strut\kern-.2em\operatorname{Hom}\nolimits}
\newcommand{\Hhom}[3]{\mathcal{H}om_{#1}(#2,#3)}

\DeclareMathOperator{\add}{\mathsf{add}}

\DeclareMathOperator{\thick}{\mathsf{thick}}
\DeclareMathOperator{\CM}{\mathsf{CM}}
\DeclareMathOperator{\ACM}{\mathsf{ACM}}
\DeclareMathOperator{\moduleCategory}{\mathsf{mod}} \renewcommand{\mod}{\moduleCategory}
\DeclareMathOperator{\coh}{\mathsf{coh}}

\DeclareMathOperator{\vect}{\mathsf{vect}}

\DeclareMathOperator{\proj}{\mathsf{proj}}

\tikzset{every picture/.style={line width=0.75pt}} 

\begin{document}

	\title[Ideal quotients of $n$-exact categories]{Ideal quotients of $n$-exact categories}
	
	\author[W. Weng] {Weikang Weng}

	\makeatletter \@namedef{subjclassname@2020}{\textup{2020} Mathematics Subject Classification} \makeatother
	
	\subjclass[2020]{18E05, 18G05, 16G10}
	
	\keywords{$n$-exact category, ideal quotient, $n$-cluster tilting subcategory,
		arithmetically Cohen--Macaulay bundle, Geigle-Lenzing projective space}
	\dedicatory{Dedicated to  Professor Yanan Lin on the occasion of his 70th birthday}
	\begin{abstract} 
	 	In this paper, we extend the notion of a removable subcategory
	 	to $n$-exact categories, and prove that an ideal quotient
	 	by a subcategory of projective-injectives inherits
	 	an $n$-exact structure from the original category
	 	if and only if the subcategory is removable.
	 	In the Frobenius case, the quotient remains Frobenius,
	 	and the stable $(n+2)$-angulated categories are equivalent. 
	 	We give a sufficient condition for a subcategory of projective-injectives to be  removable. 
	 	We also show that the ideal quotient construction is compatible
	 	with cluster tilting. Finally, we apply the construction to $2$-cluster tilting subcategories
	 	of the category of ACM bundles on a Geigle--Lenzing projective
	 	plane, obtaining quotients for both natural and relative
	 	$2$-exact structures.
	\end{abstract}
	
	\maketitle
	
	\section{Introduction}
	
	Exact categories in the sense of Quillen \cite{Q} provide a framework for homological
	algebra in additive categories equipped with a chosen class
	of kernel-cokernel pairs; see \cite{B}.
	The stable category of a Frobenius exact category, obtained by
	factoring out its projective-injective objects, carries a natural
	triangulated structure \cite{Ha}. A related question is when
	an ideal quotient  by a subcategory of
	projective-injective objects inherits the original exact structure. 
	Chen \cite[Theorem~3.1]{C} gave sufficient conditions for such a quotient
	of a Frobenius category to remain Frobenius.
	Demonet--Iyama \cite[Theorem~3.6]{DI} showed that the quotient
	inherits the exact structure precisely when the quotient functor sends
	inflations and deflations to categorical monomorphisms and
	epimorphisms, respectively. 
	
	Jasso \cite{Ja} introduced $n$-exact categories by formulating
	higher analogues of the axioms of exact categories,
	using $n$-exact sequences in place of short exact sequences.
	For $n=1$, these recover the usual exact categories. 
	The stable category of a Frobenius $n$-exact category carries
	a natural $(n+2)$-angulated structure in the sense of
	Geiss--Keller--Oppermann \cite{GKO}.  
	Herschend--Liu--Nakaoka \cite{HLN} introduced
	$n$-exangulated categories as higher analogues of extriangulated categories of Nakaoka--Palu \cite{NP}. They encompass both $n$-exact and $(n+2)$-angulated categories.
	Hu--Zhang--Zhou \cite[Theorem~3.1]{HZZ} established a criterion
	for an ideal quotient to inherit an $n$-exangulated structure.
	However, the resulting $n$-exangulated structure need not be $n$-exact, even when the original
	category is $n$-exact \cite[Example~3.4]{HZZ}.	
		
	In this paper, we give a direct treatment of ideal quotients
	of $n$-exact categories, including the Frobenius case,
	and establish related results. In particular, we apply the ideal quotient construction to Geigle--Lenzing projective planes. 
		
	Let $(\mathscr C,\mathscr X)$ be an $n$-exact category, and let $\mathscr P$ and $\mathscr I$ denote its subcategories of $\mathscr X$-projectives and $\mathscr X$-injectives, respectively. 	Let $\mathscr{F}$ be an additive subcategory of $\mathscr{C}$.	Denote by $\pi_\mathscr{F}\colon \mathscr{C}\rightarrow\mathscr{C}/[\mathscr{F}]$ the canonical functor. 
	The subcategory  $\mathscr{F}$  is called \emph{removable (for $\mathscr{X}$)} if $\mathscr{F}\subset \mathscr{P}\cap \mathscr{I}$ and  $\pi_\mathscr{F}$ sends $\mathscr{X}$-conflations in $\mathscr{C}$  to $n$-exact sequences in $\mathscr{C}/[\mathscr{F}]$. 					
		Let $\mathscr X_{\mathscr F}$ be the class of sequences of length
		$n+2$ in $\mathscr C/[\mathscr F]$ obtained from the images of
		$\mathscr X$-conflations under $\pi_{\mathscr F}$ by homotopy
		equivalences fixing both end terms. 		
		The following theorem characterizes when $\mathscr X_{\mathscr F}$
		defines an $n$-exact structure on $\mathscr C/[\mathscr F]$
		and describes the Frobenius case.
	
	\begin{mainthm} [Theorems~\ref{main thm1},~\ref{main thm2} and Remark~\ref{canonical equivalence}] \label{Thm A} 
		Let $(\mathscr C,\mathscr X)$ be an $n$-exact category and let $\mathscr F\subset\mathscr P\cap\mathscr I$ be an additive subcategory. The following hold.
		\begin{itemize}
			\item[(a)] The subcategory $\mathscr F$ is removable if and only if $(\mathscr C/[\mathscr F],\mathscr X_{\mathscr F})$ is an $n$-exact category.			
			In this case, $\pi_{\mathscr F}$ induces an isomorphism of abelian groups 
			\[
			\nExt^1_{\mathscr X}(C,A)\xrightarrow{\sim}
			\nExt^1_{\mathscr X_{\mathscr F}}(C,A)
			\]
			for any $A,C\in\mathscr C$. Here, $\nExt^1_{\mathscr X}(C,A)$ is the group of Yoneda equivalence classes of $\mathscr X$-conflations with end terms
			$A$ and $C$.  
			\item[(b)] If $(\mathscr C,\mathscr X)$ is Frobenius $n$-exact, then $\mathscr F$ is removable if and only $(\mathscr C/[\mathscr F],\mathscr X_{\mathscr F})$ is Frobenius $n$-exact. In this case, its subcategory of projective-injectives is precisely $\mathscr P/[\mathscr F]$. Moreover, $\pi_{\mathscr F}$ induces an
			$(n+2)$-angulated equivalence
			$$ \underline{\mathscr{C}}=
			\mathscr C/[\mathscr P]
			\xlongrightarrow{\sim}
			(\mathscr C/[\mathscr F])/(\mathscr P/[\mathscr F])=\underline{\mathscr C/[\mathscr F]}.
			$$	
		\end{itemize}
	\end{mainthm}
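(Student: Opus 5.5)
For (a), the ``if'' direction is the substantive part; the converse is immediate. If $(\mathscr C/[\mathscr F],\mathscr X_{\mathscr F})$ is $n$-exact, then each image $\pi_{\mathscr F}(X)$ of an $\mathscr X$-conflation lies in $\mathscr X_{\mathscr F}$. Hence it is an $n$-exact sequence, so $\mathscr F$ is removable.

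For the converse, assume $\mathscr F$ is removable. The first step is a lifting lemma. Morphisms in $\mathscr C/[\mathscr F]$ lift to $\mathscr C$, and any map factoring through $\mathscr F$ is null-homotopic on conflations. The reason is that $\mathscr F\subset\mathscr P\cap\mathscr I$: a map from a conflation end term into $F\in\mathscr F$ extends along the inflation, and a map out of $F$ lifts along the deflation. Using this, I will show that every morphism of sequences in $\mathscr X_{\mathscr F}$ with prescribed end maps lifts, up to homotopy, to a morphism of $\mathscr X$-conflations.

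With the lifting lemma in hand, I verify Jasso's axioms in order.
\begin{itemize}
\item[(E0)] The trivial sequence lies in $\mathscr X_{\mathscr F}$, since it is the image of $0\to\cdots\to 0\to A \xrightarrow{1} A\to 0\to\cdots$.
\item[(E1)] For closedness of $\mathscr X_{\mathscr F}$-inflations under composition, lift the inflations $f,g$ to $\mathscr X$-inflations. A representative of a class is an $\mathscr X$-inflation up to adding a summand that passes through $\mathscr F$. I expect to handle this by replacing $f$ by $\binom{f}{i}\colon A\to B\oplus I$ with $i$ an $\mathscr X$-inflation into an injective; pass to $\mathscr F$ where possible.
\item[(E2)] Pushouts along arbitrary morphisms: lift the morphism, take the $n$-pushout diagram in $\mathscr C$, and apply $\pi_{\mathscr F}$. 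Removability gives exactness of the images, and the lifting lemma gives the required $n$-pushout property of the image diagram.
\item[(E2$^{\op}$)] This is dual.
\end{itemize}
Closure of $\mathscr X_{\mathscr F}$ under homotopy equivalence holds by definition. Finally, the map on $\nExt^1$ is well defined and additive because $\pi_{\mathscr F}$ preserves Baer sums, being compatible with $n$-pushouts and $n$-pullbacks. It is surjective by the definition of $\mathscr X_{\mathscr F}$. It is injective because a Yoneda equivalence in the quotient lifts by the lifting lemma; equivalently, a conflation whose image is split has a lifted retraction, corrected by an $\mathscr F$-factoring term, which splits it in $\mathscr C$.

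For (b), assume $(\mathscr C,\mathscr X)$ is Frobenius and $\mathscr F$ is removable. Objects of $\mathscr P$ remain projective and injective in the quotient, because lifting properties descend along $\pi_{\mathscr F}$ via the lifting lemma and the $\nExt$ isomorphism. Enough projectives and injectives follow by applying $\pi_{\mathscr F}$ to the resolving conflations in $\mathscr C$. For the reverse inclusion, let $\pi_{\mathscr F}(C)$ be projective in the quotient. Then the image of a deflation $P\to C$ with $P\in\mathscr P$ splits, so $\nExt^1_{\mathscr X}(C,-)=0$ by the isomorphism in (a). Hence $C$ is projective, and the projective-injectives are exactly $\mathscr P/[\mathscr F]$. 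The converse implication of (b) reduces to (a).

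The equivalence $\mathscr C/[\mathscr P]\to(\mathscr C/[\mathscr F])/(\mathscr P/[\mathscr F])$ is the evident isomorphism of iterated ideal quotients, because $[\mathscr F]\subset[\mathscr P]$. It is $(n+2)$-angulated because the suspension and the standard angles are built from conflations with projective-injective middle terms, and these are carried to the corresponding data in the quotient.

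The main obstacle is the lifting and closure step in (E1)/(E2). Representatives of $\mathscr X_{\mathscr F}$ are only homotopy equivalent to images of conflations, and morphisms are only defined modulo $[\mathscr F]$. The key tool I will try first is the stabilization trick $B\mapsto B\oplus F$ with $F\in\mathscr F$, together with the homotopy invariance of $n$-pushout diagrams from Jasso's paper.
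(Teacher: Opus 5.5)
Your handling of the $\nExt^1$ map, of part (b) and of the angulated equivalence is in line with the paper. Deducing $\mathscr X_{\mathscr F}$-projectivity from $\nExt^1_{\mathscr X_{\mathscr F}}(C,-)\cong\nExt^1_{\mathscr X}(C,-)$ is a legitimate shortcut once (a) is known. However, the core of (a) has two genuine gaps.

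The first is the step you flag yourself: lifting $\mathscr X_{\mathscr F}$-inflations and deflations to $\mathscr X$-ones. Unwinding the homotopy equivalence for an $\mathscr X_{\mathscr F}$-deflation $\overline\delta\colon X\to Y$ only yields an $\mathscr X$-deflation of the form $\delta'=\delta\alpha+ba=\begin{bmatrix}\delta & b\end{bmatrix}\begin{bmatrix}\alpha\\ a\end{bmatrix}$ with $b\colon F\to Y$ and $F\in\mathscr F$. To compose two such deflations you need $\begin{bmatrix}\delta & b\end{bmatrix}\colon X\oplus F\to Y$ itself to be an $\mathscr X$-deflation. Passing from ``a composite through it is a deflation'' to ``it is a deflation'' is exactly the strong obscure axiom (Proposition~\ref{Strong obscure axiom}), which needs $n\ge2$ or weak idempotent completeness. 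This is Lemma~\ref{a lemma 2}; for $n=1$ in general the paper instead relies on Demonet--Iyama.

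Your proposed substitute $\begin{bmatrix}f\\ i\end{bmatrix}\colon A\to B\oplus I$ does not work, for three reasons. Part (a) assumes no enough injectives. The object $I$ need not lie in $\mathscr F$, so this morphism is not $\overline f$ in the quotient. And you would still need the obscure axiom to know it is an inflation.

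The second gap is that Jasso's definition requires closure under weak isomorphisms, which you never verify. Closure under homotopy equivalences is a different statement, and only homotopy equivalences fixing both end terms are built into the definition of $\mathscr X_{\mathscr F}$. The real question is this: if $\overline{X^{\mr}}$ is an arbitrary $n$-exact sequence in $\mathscr C/[\mathscr F]$ receiving a morphism from a member of $\mathscr X_{\mathscr F}$ that is invertible on both end terms, why is $\overline{X^{\mr}}$ homotopy equivalent to the image of an $\mathscr X$-conflation? Nothing in your outline gives this. In particular your lifting lemma does not, since it only concerns members of $\mathscr X_{\mathscr F}$. This is exactly axiom (EI) of Definition~\ref{reformulation of Jasso's axioms}.

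The paper proves (EI) as follows. It writes $d_W^n=\begin{bmatrix}d_X^n & b\end{bmatrix}\begin{bmatrix}f^nw^n\\ a\end{bmatrix}$ and uses the obscure axiom again to see that $\begin{bmatrix}d_X^n & b\end{bmatrix}$ is an $\mathscr X$-deflation. It completes this to some $Z^{\mr}\in\mathscr X$, and then $\overline{X^{\mr}}$ and $\overline{Z^{\mr}}$ are two $n$-kernels of the same morphism, hence homotopy equivalent.

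Your (E2) sketch is also loose. The $n$-pushout property in the quotient comes from removability applied to the mapping cocone, which lies in $\mathscr X$, not from the lifting lemma. That part is repairable, e.g.\ via Proposition~\ref{redundant}. Alternatively, both main gaps can be bypassed by the $n$-exangulated route of Remark~\ref{alternative quotient proof}.
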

	 When $n=1$,  part~(a) recovers Demonet--Iyama's result \cite[Theorem~3.6]{DI}, and part~(b) gives its Frobenius form, recorded in \cite[Lemma 2.7]{CDR}; see Remark \ref{ramark:two exact structures}. 
		 Part~(a) also follows from the theory of $n$-exangulated
		 categories; see Remark~\ref{alternative quotient proof}.	 
		We give a direct proof using Herschend--Liu--Nakaoka's
		reformulation of Jasso's axioms for an $n$-exact category; see Definition \ref{reformulation of Jasso's axioms}. 	Proposition~\ref{redundant} shows that certain axioms
		in this formulation are redundant.
		After omitting them, the remaining axioms agree with Jasso's,
		except that closure under weak isomorphisms is replaced by
		$(\mathrm{EC}')$ and $(\mathrm{EI})$; see
		Remark~\ref{compare two axioms}.
		These conditions are convenient for the ideal quotient
		construction since closure under weak isomorphisms
		can be difficult to verify directly.
					
		For an additive subcategory $\mathscr{F}\subset \mathscr{P}\cap \mathscr{I}$, we give a sufficient condition for $\mathscr{F}$ to be a removable subcategory of $\mathscr{C}$; see Proposition \ref{A sufficient condition}. Together with  Theorem \ref{Thm A}(b), this recovers Chen's result \cite[Theorem 3.1]{C} when $n=1$. 
				
		A main source of $n$-exact categories is provided by
		$n$-cluster tilting subcategories of exact categories. We recall Jasso's construction
		\cite[Theorem~4.14]{Ja},
		with Kvamme's correction \cite[Theorems~2.6, 8.1]{Kv},
		in Theorem~\ref{n-exact from n-CT}.
		The following theorem shows that our ideal quotient construction
		is compatible with cluster tilting.
		
	\begin{mainthm}[Theorem \ref{Compatibility with cluster tilting}] \label{Thm B} 
			Let $\mathscr C$ be an $n$-cluster tilting subcategory of
			an exact category $(\mathscr A,\mathscr E)$, endowed with
			the induced $n$-exact structure $\mathscr X$ of
			Theorem~\ref{n-exact from n-CT}.
			If $\mathscr F$ is a removable subcategory of
			$(\mathscr A,\mathscr E)$, then the following hold.
			\begin{itemize}
				\item[(a)]
				$\mathscr F$ is a removable subcategory of $(\mathscr C,\mathscr X)$.
				\item[(b)]
				$\mathscr C/[\mathscr F]$ is an $n$-cluster tilting
				subcategory of $\mathscr A/[\mathscr F]$.
			\end{itemize}
			Moreover, the induced $n$-exact structure on
			$\mathscr C/[\mathscr F]$ coincides with
			$\mathscr X_{\mathscr F}$.
			In other words, the diagram of constructions (the vertical arrows represent the construction in
			Theorem~\ref{n-exact from n-CT})
			\[
			\begin{tikzcd}[column sep=2.4em,row sep=2.4em]
				(\mathscr A,\mathscr E)
				\arrow[r,"\pi_{\mathscr F}"]
				\arrow[d,Rightarrow]
				&
				(\mathscr A/[\mathscr F],\mathscr E_{\mathscr F})
				\arrow[d,Rightarrow]
				\\
				(\mathscr C,\mathscr X)
				\arrow[r,"{\pi_{\mathscr F}|_{\mathscr C}}"]
				&
				(\mathscr C/[\mathscr F],\mathscr X_{\mathscr F})
			\end{tikzcd}
			\]
			commutes. If moreover $(\mathscr A,\mathscr E)$ is Frobenius and
			$\mho_{\mathscr E}^{n}(\mathscr C)\subset\mathscr C$, then the top and bottom rows consist of Frobenius
			exact and Frobenius $n$-exact categories, respectively.
	\end{mainthm}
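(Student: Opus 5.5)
We plan to work directly with the definitions and to invoke Theorem~\ref{Thm A}, applied with $n=1$ to $(\mathscr A,\mathscr E)$, where it specializes to Demonet--Iyama. Since $\mathscr F$ is removable for $\mathscr E$, we have $\mathscr F\subset\mathscr P(\mathscr E)\cap\mathscr I(\mathscr E)$. Moreover $(\mathscr A/[\mathscr F],\mathscr E_{\mathscr F})$ is exact, and $\pi_{\mathscr F}$ induces isomorphisms $\Ext^i_{\mathscr E}(X,Y)\cong\Ext^i_{\mathscr E_{\mathscr F}}(X,Y)$. For $i=1$ this is part~(a). For higher $i$ we use dimension shifting. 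An $\mathscr E$-projective $P$ remains $\mathscr E_{\mathscr F}$-projective, because every conflation in the quotient is the image of one in $\mathscr A$ and maps out of $P$ lift. Alternatively, we express Yoneda classes of $i$-extensions as splicings of conflations; since $\pi_{\mathscr F}$ is full and bijective on objects, such splicings lift.

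For part~(a), first note that $\mathscr F\subset\mathscr C$, because $\mathscr C$ contains all $\mathscr E$-projectives and $\mathscr E$-injectives. The $\mathscr X$-projectives and $\mathscr X$-injectives are exactly the $\mathscr E$-projectives and $\mathscr E$-injectives in $\mathscr C$, so $\mathscr F\subset\mathscr P\cap\mathscr I$. Now take an $\mathscr X$-conflation $X_0\to X_1\to\cdots\to X_{n+1}$. It splices from $\mathscr E$-conflations $K_i\to X_i\to K_{i+1}$ in $\mathscr A$. Applying $\pi_{\mathscr F}$ gives $\mathscr E_{\mathscr F}$-conflations, so the image is an exact sequence in the exact category $\mathscr A/[\mathscr F]$.

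To show that the image is an $n$-exact sequence in $\mathscr C/[\mathscr F]$, we need the $\Hom(-,C)$ and $\Hom(C,-)$ exactness for $C\in\mathscr C/[\mathscr F]$. This follows once we know $\Ext^{i}_{\mathscr E_{\mathscr F}}(\mathscr C/[\mathscr F],\mathscr C/[\mathscr F])=0$ for $0<i<n$, by the standard argument of Jasso \cite[Theorem~4.14]{Ja}, since the splicing terms are then handled by the vanishing. That vanishing follows from the Ext isomorphisms of the first paragraph. Hence $\mathscr F$ is removable for $\mathscr X$.

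For part~(b), the vanishing $\Ext^i=0$ for $0<i<n$ was just established. The remaining requirement is the description
\[
\mathscr C/[\mathscr F]=\{X\mid \Ext^{i}_{\mathscr E_{\mathscr F}}(X,\mathscr C/[\mathscr F])=0,\ 0<i<n\},
\]
together with its dual. Since objects of the quotient coincide with objects of $\mathscr A$, and the Ext groups are isomorphic, both sides transfer from the corresponding equalities in $\mathscr A$. Functorial finiteness, including generating-cogenerating, transfers because right $\mathscr C$-approximations in $\mathscr A$ map to right approximations in the quotient by fullness. Kvamme's extra hypotheses in Theorem~\ref{n-exact from n-CT} transfer similarly; for example, having enough projectives transfers via the same argument. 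We expect this Ext-transfer to be the main obstacle. In particular, we must make sure that higher Ext, and not only $\Ext^1$, is preserved, and that whatever extra condition Theorem~\ref{n-exact from n-CT} imposes survives the quotient.

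For the coincidence of the $n$-exact structures, a sequence in the structure induced from $\mathscr E_{\mathscr F}$ is an $n$-exact sequence in $\mathscr C/[\mathscr F]$ that splices from $\mathscr E_{\mathscr F}$-conflations. Each such conflation is, up to isomorphism, the image of an $\mathscr E$-conflation. Lifting the pieces and splicing in $\mathscr A$ yields a sequence which is exact in $\mathscr A$ and has terms in $\mathscr C$, hence lies in $\mathscr X$. Its image agrees with the original sequence up to homotopy equivalence fixing the ends, and therefore lies in $\mathscr X_{\mathscr F}$. Conversely, every sequence of $\mathscr X_{\mathscr F}$ is homotopy equivalent to the image of an $\mathscr X$-conflation, which lies in the induced structure by the argument of part~(a). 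Both structures are closed under such homotopy equivalences, so they coincide.

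Finally, in the Frobenius case, Theorem~\ref{Thm A}(b) shows that $\mathscr A/[\mathscr F]$ is Frobenius. The condition $\mho^n(\mathscr C)\subset\mathscr C$ passes to the quotient, since syzygies are computed by images of conflations and $\mathscr P/[\mathscr F]$ is the class of projectives. So the known result on Frobenius $n$-cluster tilting applies in both rows.
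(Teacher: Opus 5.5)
Your proof has a genuine gap, and it is the one you flag yourself: you never show that $\phi^i\colon\Ext^i_{\mathscr E}(U,V)\to\Ext^i_{\mathscr E_{\mathscr F}}(U,V)$ is injective for $i\ge2$. Neither of your two mechanisms gives this.

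\begin{itemize}
\item Dimension shifting through projectives needs $(\mathscr A,\mathscr E)$ to have enough projectives, and that is not assumed.
\item Lifting splicings along the full, bijective-on-objects functor $\pi_{\mathscr F}$ only proves surjectivity. A Yoneda equivalence in $\mathscr A/[\mathscr F]$ is witnessed by morphisms of extensions whose squares commute only modulo $[\mathscr F]$. Such morphisms need not lift to morphisms of $\mathscr E$-extensions without further work.
\end{itemize}

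Surjectivity is enough for part of the argument. It gives the vanishing of $\Ext^i_{\mathscr E_{\mathscr F}}$ on $\mathscr C/[\mathscr F]$, so your argument for (a) goes through. It also gives the inclusion of $\mathscr C/[\mathscr F]$ into the Ext-orthogonal class.

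The reverse inclusion in (b) is where it fails. You need that $\Ext^i_{\mathscr E_{\mathscr F}}(X,\mathscr C/[\mathscr F])=0$ for $0<i<n$ forces $X\in\mathscr C$. That requires $\Ext^i_{\mathscr E}(X,C)=0$, which is exactly injectivity of $\phi^i$ for $2\le i\le n-1$. For $n\le2$ only $\Ext^1$ enters and Theorem~\ref{Thm A} covers it. For $n\ge3$, however, (b) is unproved. The ``moreover'' identification presupposes that Theorem~\ref{n-exact from n-CT} applies to $\mathscr C/[\mathscr F]\subset\mathscr A/[\mathscr F]$, so it inherits the gap.

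The missing step is an induction on $i$ using the long exact Ext sequences, as in the paper.
\begin{itemize}
\item For $i\ge2$, write $\xi=\partial_e(\eta)$. Here $e\colon V\to B\xrightarrow{q}K$ is the first conflation of a representative of $\xi$, and $\eta\in\Ext^{i-1}_{\mathscr E}(U,K)$.
\item $\pi_{\mathscr F}$ commutes with splicing and sends morphisms of $\mathscr E$-extensions to morphisms of $\mathscr E_{\mathscr F}$-extensions. So the maps $\phi$ give a commutative ladder between the exact sequences $\Ext^{i-1}(U,B)\xrightarrow{q_*}\Ext^{i-1}(U,K)\xrightarrow{\partial_e}\Ext^{i}(U,V)$ in $\mathscr A$ and in $\mathscr A/[\mathscr F]$.
\item If $\phi^i(\xi)=0$, then $\phi^{i-1}(\eta)=\bar q_*(\zeta)$ for some $\zeta$.
\item Surjectivity of $\phi^{i-1}_{U,B}$ gives $\zeta=\phi^{i-1}(\eta_0)$.
\item Injectivity of $\phi^{i-1}_{U,K}$ then gives $\eta=q_*\eta_0$, hence $\xi=\partial_e q_*\eta_0=0$.
\end{itemize}

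Two smaller repairs are needed.
\begin{itemize}
\item In (a), an $\mathscr X$-conflation is only homotopy equivalent to an $\mathscr E$-acyclic complex. Add that $\pi_{\mathscr F}$ preserves homotopy equivalences and that $n$-exactness is homotopy invariant.
\item In the coincidence step, the middle terms $Z^j$ of your lifted conflations are only isomorphic to $Y^j\in\mathscr C$ in the quotient. You need that $Z^j$ is a retract of $Y^j\oplus F$ with $F\in\mathscr F$. You also need that $\mathscr C$, being cut out by Ext-orthogonality, is closed under retracts.
\end{itemize}

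With that check, your lift-and-splice argument for that step is valid. It is more elementary than the paper's route, which lifts the whole Yoneda class via $\phi^n$. The paper then needs Kvamme's closure of $\mathscr C$ under $n$-extensions, \cite[Lemma~3.15]{Kv} and $(\mathrm{EI})$.
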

	
	We apply these results to a Geigle--Lenzing projective plane
	$\X$ of weight type $(p_1,\ldots,p_4)$, where $p_i\ge2$;
	see \cite{HIMO,CRW}. Let $\L$ be the abelian group on generators $\x_1,\ldots,\x_4,\c$ modulo the relations $p_1 \x_1= \dots=p_4 \x_4=\c$.  	
	Let $\coh\X$ be the abelian category of coherent sheaves on $\X$. Denote by $\vect \X$ the category of vector bundles on $\X$. Its subcategory $\ACM\X$ of arithmetically Cohen--Macaulay bundles on $\X$
	is Frobenius with the exact structure $\mathscr E$
	inherited from $\coh\X$. Its indecomposable projective-injective objects are precisely the line bundles $\OO(\x)$, $\x\in\L$. Each line bundle has a unique expression 
	$$L=\OO(\sum_{i=1}^{3}\lambda_i\x_i+a\x_4)$$
	such that $0\le \lambda_i <p_i$ for $1\le i \le 3$, and $a\in \Z$. We  call $L$ \emph{fading} if $\lambda_i=p_i-1$ for some $1\le i\le 3$, and \emph{persistent} otherwise. Let $\mathscr{F}$ denote the closure of the fading line bundles under finite direct sums and direct summands. The treatment here is inspired by \cite{KLM}.
	
	For a $2$-cluster  tilting subcategory $\mathscr{C}$ of $ \ACM \X$, let $\mathscr X$ and $\mathscr X^{\mathrm{vec}}$ denote the $2$-exact structures on $\mathscr C$ induced by
	the exact structures on $\ACM\X$ and $\operatorname{vect}\X$ inherited from $\operatorname{coh}\X$, respectively.
	Let $\mathscr X^{\mathrm{rel}}$ consist of $\mathscr X^{\mathrm{vec}}$-conflations, whose first differential
	is $\mathscr F$-monic and the last one is $\mathscr F$-epic. It follows that	 $(\mathscr C, \mathscr X^{\mathrm{rel}})$ is a $2$-exact category by Proposition \ref{rel exact}. 
	The following result illustrates the ideal quotient constructions in this setting. A key step of its proof is to show that each ACM bundle has Properties (L) and (R); see  Definition \ref{Properties (L) and (R)} and Proposition \ref{Property (L) and (R) for ACM bundles}. 
	
	\begin{mainthm}[Theorems \ref{main thm 3}, \ref{main thm 4} and  Proposition \ref{relative criterion}] \label{Thm C} 
		Let $\mathscr{C}$ be a $2$-cluster tilting subcategory of $\ACM \X$. The following hold. 
		\begin{itemize}
			\item[(a)] The subcategory $\mathscr F\subset\mathscr C$ is  removable for both $\mathscr X$ and $\mathscr X^{\mathrm{rel}}$. Moreover, both  $(\mathscr C/[\mathscr F],\mathscr X_{\mathscr F})$ and $(\mathscr C/[\mathscr F],\mathscr X_{\mathscr F}^{\mathrm{rel}})$ are $2$-exact categories.  	  If $\mho^2_{\mathscr{E}}(\mathscr{C})\subset\mathscr{C}$, then $(\mathscr C/[\mathscr F],\mathscr X_{\mathscr F})$ is a Frobenius $2$-exact category.
		\item[(b)] We have $\mathscr X\subset\mathscr X^{\mathrm{rel}}$ and equality holds if and only if
		$\min\{p_1,p_2,p_3\}=2$.
		\end{itemize}
	\end{mainthm}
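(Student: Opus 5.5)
Part (a) will be reduced to Theorem~\ref{Thm A}, Theorem~\ref{Thm B} and Proposition~\ref{A sufficient condition}, with one geometric input: every ACM bundle has Properties (L) and (R) (Definition~\ref{Properties (L) and (R)}, Proposition~\ref{Property (L) and (R) for ACM bundles}). I expect these properties to say, roughly, that for each $E\in\ACM\X$ there is a minimal right $\mathscr F$-approximation (resp.\ left) whose kernel (resp.\ cokernel) behaves well. This is what the sufficient condition of Proposition~\ref{A sufficient condition} needs: the projection $\pi_{\mathscr F}$ should send inflations to monomorphisms and deflations to epimorphisms in the quotient.

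\emph{Step 1: $\mathscr F$ is removable for $(\ACM\X,\mathscr E)$.} The indecomposable objects of $\mathscr F$ are line bundles, hence projective-injective. I will check that the hypotheses of Proposition~\ref{A sufficient condition} follow from Properties (L) and (R). The key point is a Hom computation: if a morphism $f\colon E\to E'$ factors through $\mathscr F$ after composition with an inflation, then $f$ itself factors through $\mathscr F$. I would do this using the explicit description of $\Hom(\OO(\x),\OO(\y))$ via $\L$-graded polynomial components, together with the fact that fading line bundles are closed under the relevant degree shifts.

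\emph{Step 2: transfer to $\mathscr C$.} Since $\mathscr F\subset\add\{\OO(\x)\}\subset\mathscr C$ (a 2-cluster tilting subcategory contains all projective-injectives), Theorem~\ref{Thm B} gives removability of $\mathscr F$ for $\mathscr X$. By Theorem~\ref{Thm A}(a), $(\mathscr C/[\mathscr F],\mathscr X_{\mathscr F})$ is 2-exact. Under $\mho^2_{\mathscr E}(\mathscr C)\subset\mathscr C$, Theorem~\ref{Thm B} shows that $(\mathscr C,\mathscr X)$ is Frobenius, and Theorem~\ref{Thm A}(b) then makes the quotient Frobenius.

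\emph{Step 3: the relative structure.} Here $\mathscr F$ need not consist of $\mathscr X^{\mathrm{vec}}$-projective objects. However, the definition of $\mathscr X^{\mathrm{rel}}$ forces $\mathscr F\subset\mathscr P\cap\mathscr I$ for $\mathscr X^{\mathrm{rel}}$: the first differential is $\mathscr F$-monic and the last is $\mathscr F$-epic, so $\Hom(F,-)$ and $\Hom(-,F)$ are exact on its conflations. I then verify removability directly. Given an $\mathscr X^{\mathrm{rel}}$-conflation, its image is an $n$-exact sequence in $\mathscr C/[\mathscr F]$; to see this, I check the weak kernel/cokernel conditions in the quotient by lifting morphisms and correcting the ones factoring through $\mathscr F$ using the $\mathscr F$-monic/epic hypothesis and Properties (L) and (R) again. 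Theorem~\ref{Thm A}(a) then gives the 2-exact quotient.

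\emph{Part (b).} An $\mathscr X$-conflation has all terms in $\ACM\X$ and is exact in $\coh\X$. Since line bundles are projective-injective in $\ACM\X$, it is $\Hom(\mathscr F,-)$- and $\Hom(-,\mathscr F)$-exact, hence lies in $\mathscr X^{\mathrm{rel}}$. For the converse the question is whether an $\mathscr F$-relative conflation is also exact with respect to \emph{all} line bundles, i.e.\ also the persistent ones. I would use Proposition~\ref{relative criterion}, which presumably reduces this to whether every line bundle lies in the subcategory generated by $\mathscr F$ under the relevant shifts. If $\min p_i=2$, say $p_1=2$, then $\lambda_1\in\{0,1\}$ and every persistent line bundle has $\lambda_1=0$. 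A shift by $\x_1$ then relates it to a fading one, and exact sequences such as $0\to\OO(\y)\to\OO(\y+\x_1)\to\cdots$ transfer $\mathscr F$-exactness to all line bundles. If all $p_i\ge3$, I will construct a counterexample: take a persistent line bundle like $\OO(\x_1+\x_2+\x_3)$, and build a 2-cluster tilting conflation, using tilting bundle or Koszul-type sequences, that is $\mathscr F$-exact but not $\Hom(\OO(\x_1+\x_2+\x_3),-)$-exact.

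\emph{Main obstacle.} I expect the main difficulty to be Step 3 and the counterexample in (b), since both need explicit control of Hom spaces between ACM bundles and line bundles.
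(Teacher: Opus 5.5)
\paragraph{Part (a).} Your plan follows the paper: Properties (L) and (R), fed into Proposition~\ref{A sufficient condition} with $n=1$, make $\mathscr F$ removable in $(\ACM\X,\mathscr E)$. Theorem~\ref{Compatibility with cluster tilting} transfers this to $(\mathscr C,\mathscr X)$, and Theorems~\ref{main thm1} and~\ref{main thm2} give the (Frobenius) quotient. Two corrections.
\begin{itemize}
\item The ``key point'' in your Step~1 is misplaced. Once (L) and (R) are granted, you only need that $f^1$ is a weak cokernel of $f^0$ in $\ACM\X$. This follows from $\Ext^1_{\X}(F^3,T)=0$ for ACM $T$ (Lemma~\ref{lemma 4.3}). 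No graded Hom computation enters; that computation belongs to the proof of (L) for line bundles.
\item In Step~3 you must first know that $(\mathscr C,\mathscr X^{\mathrm{rel}})$ is $2$-exact, since removability is only defined for an $n$-exact category. This is Proposition~\ref{rel exact} applied to $\mathscr X^{\mathrm{vec}}$, using Proposition~\ref{ACM X and vect X}. After that, the clean argument is Proposition~\ref{A sufficient condition} with $n=2$, applied to the full (L) and (R) sequences, whose length is exactly what is needed.
\end{itemize}

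\paragraph{Part (b): the gap.} First, you cannot invoke Proposition~\ref{relative criterion}: it \emph{is} statement (b). Second, your sketch for $\min\{p_1,p_2,p_3\}=2\Rightarrow\mathscr X^{\mathrm{rel}}\subset\mathscr X$ does not work. Transferring $\mathscr F$-exactness from $\OO(\y+\x_1)$ to $\OO(\y)$ via sequences $0\to\OO(\y)\to\OO(\y+\x_1)\to\cdots$ treats each end of the conflation separately, and one end alone carries too little information.

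Concretely, take type $(2,2,2,2)$ and the last map $\gamma\colon\bigoplus_{i=1}^4\OO(-\x_i)\to\OO$ of the defining sequence of $E_0$; this sequence is an $\mathscr X^{\mathrm{vec}}$-conflation (Example~\ref{Example 1}). The cokernel of $\Hom_{\X}(\OO(\vec z),\gamma)$ is $(R/R_+)_{-\vec z}$, which is nonzero only for $\vec z=0$, and $\OO$ is persistent. So $\gamma$ is $\mathscr F$-epic, yet it is not $\OO$-epic, even though $\OO(\x_1)$ is fading.

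The missing idea is to couple the two ends by Auslander--Reiten--Serre duality.
\begin{itemize}
\item Put $K=\Im d_X^1$. $\mathscr F$-monicity of $d_X^0$ and $\mathscr F$-epicity of $d_X^2$ give $\Ext^1_{\X}(K,F)=0=\Ext^1_{\X}(F,K)$ for $F\in\mathscr F$.
\item If $p_j=2$, a persistent $L$ has $\lambda_j=0$. So the $\x_j$-coefficient of $L(\w)$ in normal form is $p_j-1$, and $L(\w)$ is fading.
\item Hence $\Ext^1_{\X}(L,K)\simeq D\Ext^1_{\X}(K,L(\w))=0$.
\end{itemize}
Thus $\Ext^1_{\X}(L,K)=0$ for every line bundle, and $K$ is ACM by \eqref{ACM bundle eq.}. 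The complex is then $\mathscr E$-acyclic, after first reducing to an $\mathscr E^{\mathrm{vec}}$-acyclic representative via homotopy invariance of $\mathscr F$-monic and $\mathscr F$-epic maps. In the $E_0$ example, what fails is precisely the $\mathscr F$-monicity of $\OO(\w)\to E_0$.

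\paragraph{The converse.} Your candidate $\OO(\x_1+\x_2+\x_3)$ is the right one, but the check still has to be carried out. Let $\vec t=\x_1+\x_2+\x_3$ and take the Koszul complex
$0\to\OO\to\bigoplus_i\OO(\x_i)\to\bigoplus_{i<j}\OO(\x_i+\x_j)\to\OO(\vec t)\to0$.
\begin{itemize}
\item Its terms lie in $\mathscr C$, and it is $\mathscr E^{\mathrm{vec}}$-acyclic by Lemma~\ref{kernels of epimorphisms in vect X}.
\item The relevant cokernels are $Q_{\vec a}$ and $Q_{\vec t-\vec a}$, where $Q=R/(X_1,X_2,X_3)$ is concentrated in degrees $r\x_4$. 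So they can be nonzero only for the line bundles $\OO(r\x_4)$ and $\OO(\vec t-r\x_4)$, which are all persistent when $p_1,p_2,p_3\ge3$. Hence $d^0$ is $\mathscr F$-monic and $d^2$ is $\mathscr F$-epic.
\item Taking $\vec a=\vec t$ shows that $d^2$ is not $\mathscr L$-epic.
\end{itemize}
So the complex lies in $\mathscr X^{\mathrm{rel}}$ but not in $\mathscr X$. No tilting object is needed.
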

	
	The paper is organized as follows.
	In Section~\ref{sec: Higher homological algebra}, we recall
	basic notions and facts from higher homological algebra.
	We also show that certain axioms in Herschend--Liu--Nakaoka's
	reformulation of Jasso's axioms for an $n$-exact category
	are redundant.
	In Section~\ref{sec: Ideal quotients of $n$-exact categories},
	we establish the ideal quotient theorem and show that
	the construction is compatible with cluster tilting,
	as stated in Theorems~\ref{Thm A} and~\ref{Thm B}.
	In Section~\ref{sec: Geigle--Lenzing projective planes},
	we apply these results to $2$-cluster tilting subcategories
	of $\ACM\X$ and prove Theorem~\ref{Thm C}.
	We conclude with explicit examples of Frobenius $2$-exact
	categories and their ideal quotient constructions.

	\section{Higher homological algebra}
	\label{sec: Higher homological algebra}	
	In this section, we recall some basic notions and facts on higher homological algebra needed in this paper. We show that certain axioms in Herschend--Liu--Nakaoka's reformulation of the definition
	of an $n$-exact category are redundant.  This formulation is convenient for our purposes.
	Throughout this paper, let $n\ge 1$ be a fixed integer. All subcategories
	are assumed to be full. For an additive category $\mathscr{C}$, we write $\mathscr{C}( - , - )$ for $\Hom_{\mathscr{C}}( - , -)$.
	
		\subsection{Exact categories}
	We use the terminology of \cite{B,Ke}.  A \emph{kernel-cokernel pair} $(i,p)$ in an additive category $\mathscr{A}$ is a pair of composable morphisms $X\xrightarrow{i}Y\xrightarrow{p}Z$ such that $i$ is a kernel of $p$ and $p$ is a cokernel of $i$. An \emph{exact structure} $\mathscr{E}$ on $\mathscr{A}$ is a chosen class of kernel-cokernel pairs, closed under isomorphisms,  which satisfies the axioms of  \cite[Definition 2.1]{B}. 	An \emph{exact category}  $(\mathscr{A},\mathscr{E})$ in the sense of Quillen  is an additive category $\mathscr{A}$ endowed with a specified exact structure $\mathscr{E}$. 
	The pair $(i,p)$ in $\mathscr{E}$ is called an  \emph{$\mathscr{E}$-conflation}, while $i$ is called an \emph{$\mathscr{E}$-inflation} and $p$ is  called an \emph{$\mathscr{E}$-deflation}.

	An additive category $\mathscr{A}$ is  \emph{weakly idempotent complete} if every split epimorphism has a kernel, or equivalently, if every split monomorphism has a cokernel; see \cite[Lemma 7.1]{B}.	
	If $(\mathscr{A},\mathscr{E})$ is a weakly idempotent complete exact category, then it follows that all split epimorphisms are $\mathscr{E}$-deflations and all split monomorphisms are $\mathscr{E}$-inflations; see \cite[Corollary 7.5]{B}.

	\subsection{$n$-exact categories} The notion of $n$-exact categories was introduced by Jasso \cite{Ja} as a natural generalization of exact categories. Let us recall the definitions and basic properties. We refer to \cite{HLN,Ja} for more information.
	
	Let  $\mathscr{C}$ be an additive category.	Denote by ${\bf{C}}_{\mathscr{C}}$  the category of complexes in $\mathscr{C}$. As its full subcategory, define ${\bf{C}}_{\mathscr{C}}^{n+2}$ to be the category of complexes in ${\bf{C}}_{\mathscr{C}}$ whose components are zero in the degrees outside of $\{0,\ldots,n+1\}$. Namely, an object in ${\bf{C}}_{\mathscr{C}}^{n+2}$ is a complex of the form
	\[{{X}^{\mr}}: \quad X^0 \xrightarrow{d_X^0} X^1 \xrightarrow{d_X^1}  \cdots \xrightarrow{d_X^{n-1}} X^{n} \xrightarrow{d_X^n} X^{n+1}\]

	The homotopy relation on morphism sets is defined in the usual way. Specifically, two morphisms $f^{\mr},g^{\mr}\in {\bf{C}}_{\mathscr{C}}^{n+2}({X}^{\mr},{Y}^{\mr})$ are \emph{homotopic}, written $f^{\mr}\sim g^{\mr}$, if there are  morphisms  $h^i\colon X^i\to Y^{i-1}$, $1\le i\le n+1$, such that
	\begin{eqnarray*}
		g^0-f^0&=&h^1 d_X^0,\\
		g^i-f^i&=&d_Y^{i-1}h^i+h^{i+1} d_X^i\quad (1\le i\le n),\\
		g^{n+1}-f^{n+1}&=&d_Y^nh^{n+1}.
	\end{eqnarray*}
	 A morphism $f^{\mr}\in {\bf{C}}_{\mathscr{C}}^{n+2}({X}^{\mr},{Y}^{\mr})$ is called a \emph{homotopy equivalence} if there exists a morphism $g^{\mr}\in {\bf{C}}_{\mathscr{C}}^{n+2}({Y}^{\mr},{X}^{\mr})$ such that  $g^{\mr}\circ f^{\mr}\sim 1_{{X}^{\mr}}$ and $f^{\mr}\circ g^{\mr}\sim 1_{{Y}^{\mr}}$. Such a morphism $g^{\mr}$ is called the  \emph{homotopy inverse} of $f^{\mr}$. A complex is \emph{contractible} if its identity
	 morphism is null-homotopic.

	\begin{definition}\label{DefMapCone} \cite[Definition 2.27]{HLN}
		For a morphism $f\in{\bf{C}}_{\mathscr{C}}^{n+2}({X}^{\mr},{Y}^{\mr})$ satisfying $f^0=1_{A}$ for some $A=X^0=Y^0$, its \emph{mapping cone} is defined to be the complex
		 \begin{equation}\label{mapping cone}
		{{M}}^{\mr}_f\colon\quad  X^1\overset{d_M^0}{\rightarrow}X^2\oplus Y^1\overset{d_M^1}{\rightarrow}X^3\oplus Y^2\overset{d_M^2}{\rightarrow}\cdots\overset{d_M^{n-1}}{\rightarrow}X^{n+1}\oplus Y^n\overset{d_M^n}{\rightarrow}Y^{n+1},
		 \end{equation}		 
		where
		$$d_M^0=\left[\begin{array}{c}-d_X^1\\ f^1\end{array}\right],~ d_M^i= \left[\begin{array}{cc}-d_X^{i+1}&0\\ f^{i+1}&d_Y^i\end{array}\right]~(1\le i\le n-1),~	d_M^n= \left[\begin{array}{cc}f^{n+1}&d_Y^n\end{array}\right].$$
		The \emph{mapping cocone} is defined dually, for morphism $g^{\mr}$ in ${\bf{C}}^{n+2}_{\mathscr{C}}$  satisfying $g^{n+1}=1$.
	\end{definition}
	
	Let $\mathscr{C}$ be an additive category and $f\colon A\rightarrow B$ a morphism in $\mathscr{C}$. A \emph{weak cokernel} of $f$ is a morphism
	$g\colon B\rightarrow C$ such that for any $X\in\mathscr{C}$ the induced sequence 
	$$\mathscr{C}(C,X)\xrightarrow{~\mathscr{C}(g,X)~}\mathscr{C}(B,X)\xrightarrow{~\mathscr{C}(f,X)~}\mathscr{C}(A,X)$$
	of abelian groups is exact. Equivalently, $g$ is a weak cokernel of $f$ if $gf=0$ and for each morphism
	$h\colon B\rightarrow X$ such that $hf=0$, there exists a (not necessarily unique) morphism
	$p\colon C\rightarrow X$ such that $h=pg$. These properties are subsumed in the following
	commutative diagram.
	\[
	\begin{tikzcd}
		A\rar{f}\drar[swap]{0}&B\rar{g}\dar{ h}
		&C\dlar[dotted]{ p}\\&X
	\end{tikzcd}
	\]
	Clearly, a weak cokernel $g$ of $f$ is a cokernel of $f$ if and only if $g$ is an epimorphism.
	The concept of a \emph{weak kernel} is defined dually.
	
		 Let $d_X^0\colon X^0 \to X^1$  be a morphism in $\mathscr{C}$. An \emph{$n$-cokernel} of $d_X^0$ is a sequence
		\[ (d_X^1,\dots,d_X^n):\quad X^1 \xrightarrow{d_X^1} X^2 \xrightarrow{d_X^2} \cdots \xrightarrow{d_X^{n-1}} X^{n} \xrightarrow{d_X^n} X^{n+1}\]
		of morphisms in $\mathscr{C}$ such that for every $X \in \mathscr{C}$ the induced sequence
		\[0 \rightarrow \mathscr{C}(X^{n+1}, X) \xrightarrow{} \mathscr{C}(X^{n}, X) \xrightarrow{} \cdots \xrightarrow{} \mathscr{C}(X^1, X) \xrightarrow{} \mathscr{C}(X^{0}, X)\]
		of abelian groups is exact. That is, the morphism $d_X^i$ is a weak cokernel of $d_X^{i-1}$ for all $1\le i \le n-1$ and $d_X^n$ is a cokernel of $d_X^{n-1}$. In this case, the sequence
		\[ X^0 \xrightarrow{d_X^0} X^1 \xrightarrow{d_X^1} X^2 \xrightarrow{d_X^2} \cdots \xrightarrow{d_X^{n-1}} X^{n} \xrightarrow{d_X^n} X^{n+1}\] is called \emph{right $n$-exact}. The notions of \emph{$n$-kernel} and \emph{left $n$-exact}  are defined dually.
		
		  A sequence
		 ${X}^{\mr} \in {\bf{C}}_{\mathscr{C}}^{n+2}$
		 is called an \emph{$n$-exact sequence} if it is
		 both right $n$-exact and left $n$-exact.  A morphism	$f^{\mr}\colon X^{\mr}\to Y^{\mr}$ of $n$-exact sequences in $\mathscr{C}$ is called a \emph{weak isomorphism} if $f^k$ and
		 $f^{k+1}$  are isomorphisms for some $k\in\{0,1,\dots,n+1\}$, where we set $n+2:=0$. 
		 	
	Let $\mathscr{X}$ be a class of $n$-exact sequences in $\mathscr{C}$. We call its members  \emph{$\mathscr{X}$-conflations}. Given an $\mathscr{X}$-conflation
	\[
	X^0\overset{d_X^0}{\rightarrow} X^1\xrightarrow{d_X^1}  X^2\xrightarrow{d_X^2} \cdots \xrightarrow{d_X^{n-1}} X^n\overset{d_X^n}{\to} X^{n+1},
	\]
	the morphism $d_X^0$ (resp. $d_X^n$) is  called an \emph{$\mathscr{X}$-inflation} (resp.  \emph{$\mathscr{X}$-deflation}).

	We frequently use the following reformulation of Jasso's
	definition for an $n$-exact category
	\cite[Definition~4.2]{Ja}, due to Herschend--Liu--Nakaoka
	\cite[Definition~4.21 and Proposition~4.23]{HLN}.
	This formulation is convenient for our purposes, since closure under weak isomorphisms can be difficult 
	to verify directly.

	\begin{definition} \label{reformulation of Jasso's axioms} Let $\mathscr{C}$ be an additive category, and let $\mathscr{X}$ be a class of $n$-exact sequences in $\mathscr{C}$. The pair $(\mathscr C,\mathscr X)$ is called an \emph{$n$-exact category} if the following conditions hold:
			\begin{itemize}
				\item [$(\rm EC')$] 
				
				Let ${X}^{\mr}, {Y}^{\mr}\in {\bf{C}}_{\mathscr{C}}^{n+2}$ with $A=X^0=Y^0$ and $C=X^{n+1}=Y^{n+1}$. If $f\colon{X}^{\mr}\to {Y}^{\mr}$ is a homotopy equivalence such that $f^{0}=1_A$ and $f^{n+1}=1_C$, then ${X}^{\mr} \in \mathscr{X}$  if and only if ${Y}^{\mr} \in \mathscr{X}$.
				\item [$(\rm E0)$] The sequence
				$0\rightarrow 0 \rightarrow \cdots\rightarrow 0\rightarrow 0$ is an $\mathscr{X}$-conflation.
				\item[{$(\rm E1)$}]  The class of $\mathscr{X}$-inflations is closed under composition.
					\item [$( \rm E1^{\op})$] The class of $\mathscr{X}$-deflations is closed under composition.
				\item [$(\rm E2')$] The dual of the following $(\rm E2'^{ op})$.
				\item [$(\rm {E2'}^{ op})$] 
				\begin{itemize}
					\item[{\rm (i)}] For any ${Y}^{\mr} \in \mathscr{X}$ and $f^{n+1}\colon X^{n+1}\to Y^{n+1}$, there is a commutative diagram as follows, such that the top sequence ${X}^{\mr} \in \mathscr{X}$.
					\begin{equation}\label{pullback}
						\begin{tikzcd}
							{X}^{\mr}\dar{f^{\mr}}	& X^0 \dar[equal]{} \rar[dotted]{d_X^0} & X^1 \rar[dotted]{d_X^1}\dar[dotted]{{}}& \cdots\rar[dotted]{d_X^{n-1}} & X^n\rar[dotted]{d_X^n} \dar[dotted]{} & X^{n+1} \dar{f^{n+1}}\\
							{Y}^{\mr}	& Y^0 \rar{d_Y^0} & Y^1\rar{d_Y^1} & \cdots\rar{d_Y^{n-1}}  & Y^n \rar{d_Y^n} & Y^{n+1}
						\end{tikzcd}
					\end{equation}
					\item[{\rm (ii)}] Let ${X}^{\mr},{Y}^{\mr}\in \mathscr{X}$ with $X^0=Y^0=A$. For any morphism $f^{\mr}\colon  {X}^{\mr} \to {Y}^{\mr}$ satisfying $f^0=1_{A}$,  the mapping cone ${M}^{\mr}_f\in\mathscr{X}$.
					\end{itemize}
					\item [$(\rm EI)$]  For any ${Y}^{\mr}\in \mathscr{X}$ and $n$-exact sequence  ${X}^{\mr}$ in ${\mathscr{C}}$,
					the following hold.
					\begin{itemize}
						\item[{\rm (i)}] If there is a morphism $f^{\mr}\colon{Y}^{\mr} \to {X}^{\mr}$ in which $f^0$ and $f^{n+1}$ are isomorphisms, then $f^{\mr} $ is a homotopy equivalence in ${\bf{C}}_{\mathscr{C}}^{n+2}$.
						\item[{\rm (ii)}] If there is a morphism $g^{\mr}\colon{X}^{\mr} \to {Y}^{\mr}$ in which $g^0$ and $g^{n+1}$ are isomorphisms, then $g^{\mr} $ is a homotopy equivalence in ${\bf{C}}_{\mathscr{C}}^{n+2}$.
					\end{itemize}
			\end{itemize}		
	\end{definition}

	The following observation  shows that the axioms $(\rm {E2'}^{ op})(ii)$ and its dual for $n$-exact categories are redundant.

	\begin{proposition} \label{redundant}
		Let $\mathscr{X}$ be a class of $n$-exact sequences in $\mathscr{C}$ satisfying $(\rm EC')$, $(\rm {E2'}^{})(i)$ and $(\rm {E2'}^{op})(i)$. Then $(\rm {E2'})(ii)$ and $(\rm {E2'}^{ op})(ii)$ hold. 
	\end{proposition}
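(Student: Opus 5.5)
The plan is to realise the mapping cone ${M}^{\mr}_f$ as a sequence homotopy equivalent, rel both end terms, to an $\mathscr X$-conflation produced by $(\rm E2')(\rm i)$, and then to conclude with $(\rm EC')$. By duality it suffices to treat $(\rm E2'^{op})(\rm ii)$. So fix ${X}^{\mr},{Y}^{\mr}\in\mathscr X$ with $X^0=Y^0=A$ and $f^{\mr}\colon {X}^{\mr}\to{Y}^{\mr}$ with $f^0=1_A$. Note that ${M}^{\mr}_f$ starts at $X^1$ and ends at $Y^{n+1}$.

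\textbf{Step 1 (candidate conflation).} Apply $(\rm E2')(\rm i)$ to ${Y}^{\mr}\in\mathscr X$ and the morphism $d_X^0\colon A\to X^1$. This gives an $\mathscr X$-conflation
\[
{Z}^{\mr}\colon\quad X^1\to Z^1\to\cdots\to Z^n\to Y^{n+1}
\]
and a morphism $g^{\mr}\colon{Y}^{\mr}\to{Z}^{\mr}$ with $g^0=d_X^0$ and $g^{n+1}=1_{Y^{n+1}}$. Then ${Z}^{\mr}$ and ${M}^{\mr}_f$ have the same end terms $X^1$ and $Y^{n+1}$.

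\textbf{Step 2 (morphisms between ${M}^{\mr}_f$ and ${Z}^{\mr}$).} Write ${M}^{\mr}_f$ in degrees $0,\dots,n+1$, so that $M^0=X^1$, $M^i=X^{i+1}\oplus Y^i$ for $1\le i\le n$, and $M^{n+1}=Y^{n+1}$.
\begin{itemize}
\item[(a)] First, construct $u^{\mr}\colon{M}^{\mr}_f\to{Z}^{\mr}$ with $u^0=1_{X^1}$ and $u^{n+1}=1_{Y^{n+1}}$. The component $Y^i\to Z^i$ of $u^i$ will be $g^i$. The component $X^{i+1}\to Z^i$ is built inductively from left to right. At the first stage, $d_Z^0-g^1f^1$ vanishes on $A$ after composing with $d_X^0$. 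Since $d_X^1$ is a weak cokernel of $d_X^0$, this difference factors through $d_X^1$. Repeating the argument along the weak cokernels $d_X^i$ of ${X}^{\mr}$ gives the remaining components, and at the last stage the map is forced to be compatible with $1_{Y^{n+1}}$.
\item[(b)] Second, construct $v^{\mr}\colon{Z}^{\mr}\to{M}^{\mr}_f$, again with identity end terms. Here I use that ${M}^{\mr}_f$ is an $n$-exact sequence, which follows by a diagram chase from the $n$-exactness of ${X}^{\mr}$ and ${Y}^{\mr}$ (as in \cite{HLN}). I then lift from right to left using that ${M}^{\mr}_f$ is left $n$-exact (weak kernels). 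The starting point is $d_Z^n\colon Z^n\to Y^{n+1}$. To lift it through $d_M^n=[f^{n+1}\ d_Y^n]$, I use that $Z^n\to Y^{n+1}$ is an $\mathscr X$-deflation. I apply $(\rm E2'^{op})(\rm i)$ to ${Y}^{\mr}$ along $d_Z^n$ and compare the result with ${Z}^{\mr}$ via the $n$-cokernel property of ${Z}^{\mr}$ out of $X^1$.
\end{itemize}

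\textbf{Step 3 (homotopy equivalence).} Show that $v^{\mr}u^{\mr}\sim 1$ and $u^{\mr}v^{\mr}\sim 1$. Both composites are endomorphisms of $n$-exact sequences that restrict to the identity on the two end terms. The difference $1-w^{\mr}$ of such an endomorphism is therefore zero at both ends. A null-homotopy for it is built degree by degree: on the source I use the $n$-cokernel property, extending from the left since the difference vanishes on $M^0$. The last homotopy component is then forced by the vanishing in degree $n+1$, using that $d^n$ is a cokernel. Once $u^{\mr}$ is a homotopy equivalence with $u^0=1$ and $u^{n+1}=1$, axiom $(\rm EC')$ together with ${Z}^{\mr}\in\mathscr X$ gives ${M}^{\mr}_f\in\mathscr X$.

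\textbf{Main obstacle.} I expect Step 2(b) to be the hardest part. Nothing like $(\rm EI)$ is available, so the existence of a morphism ${Z}^{\mr}\to{M}^{\mr}_f$ must come from the way ${Z}^{\mr}$ was constructed, not from general comparison of $n$-exact sequences. If the direct lift fails, my fallback is to build ${Z}^{\mr}$ the other way round. I would first apply $(\rm E2'^{op})(\rm i)$ to ${Y}^{\mr}$ and $(\rm E2')(\rm i)$ to ${X}^{\mr}$, and then symmetrise. This yields maps in both directions that are genuinely induced by the axioms. I would then check the homotopy identities of Step 3 only for these specific maps.
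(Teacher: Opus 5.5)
\textbf{Verdict: there is a genuine gap, in Step~2(b).}

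Your overall plan matches the paper's. You compare $M_f^{\mr}$ with the conflation $Z^{\mr}$ obtained from $(\mathrm{E2'})(\mathrm{i})$ along $d_X^0$, show they are homotopy equivalent with identity end terms, and conclude by $(\mathrm{EC'})$. Step~2(a) is correct. It is a direct construction replacing the paper's isomorphism $N_y^{\mr}\simeq X^{\mr}\oplus\bigoplus_i\Delta^i(Y^i)$ together with the dual of \cite[Lemma~4.17]{HLN}. Step~3 is also fine: the null-homotopy you describe only uses right $n$-exactness of the source, and $M_f^{\mr}$ has that by a diagram chase.

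The gap is Step~2(b), which is where all the content lies.

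\emph{First}, the left $n$-exactness of $M_f^{\mr}$ does not follow by a diagram chase. Every other condition does follow from $n$-exactness of $X^{\mr}$ and $Y^{\mr}$. The exception is the requirement that $d_M^{n-1}$ be a weak kernel of $d_M^n$. This says that $u\colon T\to X^{n+1}$ factors through $d_X^n$ whenever $f^{n+1}u$ factors through $d_Y^n$, which is a genuine pullback property. The paper obtains it by applying $(\mathrm{E2'}^{\mathrm{op}})(\mathrm{i})$ to $X^{\mr}$ along $u$. It then shows the composite into $Y^{\mr}$ is null-homotopic, so the pulled-back conflation is contractible.

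\emph{Second}, your right-to-left lift has no starting point. Applying $(\mathrm{E2'}^{\mathrm{op}})(\mathrm{i})$ to $Y^{\mr}$ along $d_Z^n$ gives $V^{\mr}\to Y^{\mr}$. This only shows that $d_Z^n d_V^n$ factors through $d_Y^n$. It does not factor $d_Z^n$ itself through $\begin{bmatrix} f^{n+1} & d_Y^n\end{bmatrix}$, and ``comparing via the $n$-cokernel property of $Z^{\mr}$'' does not supply this.

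Lifting from the right also leaves $v^0$ uncontrolled. This can be repaired by a homotopy, but it signals the wrong direction.

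Any $v^{\mr}$ as in 2(b) would make $M_f^{\mr}$ homotopy equivalent to $Z^{\mr}$, hence $n$-exact. So Step~2(b) must contain, in some form, the non-formal half of the $n$-exactness of $M_f^{\mr}$. Your plan assumes it instead of proving it.

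\textbf{The missing idea} is the universal property of $Z^{\mr}$: it is an $n$-pushout of $Y^{\mr}$ along $d_X^0$. Concretely, the mapping cocone $N_g^{\mr}$ of $g^{\mr}$ is $n$-exact. In particular $d^1_{N_g}$ is a weak cokernel of $d^0_{N_g}=\begin{bmatrix}-d_Y^0\\ d_X^0\end{bmatrix}$. This is not formal either. It is the dual of the contractibility argument above, and uses $(\mathrm{E2'})(\mathrm{i})$ a second time, applied to $Z^{\mr}$ along the $X^1$-component of a test map.

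With this property, build $v^{\mr}$ from left to right:
\begin{itemize}
\item Start from $v^0=1_{X^1}$ and extend the canonical morphism $y^{\mr}\colon Y^{\mr}\to M_f^{\mr}$, whose components are $d_X^0$, $\begin{bmatrix}0\\1\end{bmatrix}$ and $1$.
\item The identity $y^1d_Y^0=d_M^0d_X^0$ is exactly the compatibility that the weak-cokernel property needs.
\item The last component can be taken to be $1_{Y^{n+1}}$, because the last differential of $N_g^{\mr}$ is an epimorphism.
\end{itemize}
This is what the paper does via the dual of \cite[Lemma~4.17]{HLN}.

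Your fallback of ``symmetrising'' two pullback/pushout constructions does not provide this ingredient. With it, your Steps~2(a) and~3 finish the proof as you describe.
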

	\begin{proof} 
		We only prove $(\rm {E2'}^{ op})(ii)$ since the other assertion can be shown dually. Let ${X}^{\mr},{Y}^{\mr}\in \mathscr{X}$ with $X^0=Y^0=A$, and
		  $f^{\mr}\colon  {X}^{\mr} \to {Y}^{\mr}$ be a morphism  with  $f^0=1_{A}$.
		 The proof is divided into the following two steps.
		
		\emph{Step $1$}: We show that the mapping cone 
			\[{M}^{\mr}_f\colon\quad X^1\xrightarrow{\begin{bmatrix}
					-d^1_X\\  f^1
			\end{bmatrix}} X^2\oplus Y^1\overset{d_M^1}{\rightarrow}\cdots\overset{d_M^{n-1}}{\rightarrow}X^{n+1}\oplus Y^n\xrightarrow{\begin{bmatrix}
			f^{n+1} & d_Y^n
		\end{bmatrix}} Y^{n+1} \]
	 is an $n$-exact sequence in $\mathscr{C}$. Here, 
		 $$d_M^i= \left[\begin{array}{cc}-d_X^{i+1}&0\\ f^{i+1}&d_Y^i\end{array}\right]\quad(1\le i\le n-1).$$
		 
		 We first verify that $d_M^{n-1}$ is a weak kernel of $d_M^n$. Let $$\begin{bmatrix}
		 	u & v
		 \end{bmatrix}^{\top}\colon Z^{n+1} \to X^{n+1}\oplus Y^n$$ be a morphism such that 
		 \begin{equation}\label{equ2.3.0}
		 	f^{n+1}u+d^n_Yv=0.
		 \end{equation}
 		Applying $(\rm {E2'}^{op})(i)$ to ${X}^{\mr}$ along $u$, we obtain a morphism 	 		
 		$g^{\mr}\colon Z^{\mr}\to {X}^{\mr}$ such that $Z^{\mr}\in  \mathscr{X}$, $g^0=1_A$ and $g^{n+1}=u$. Put $h^{\mr}=f^{\mr}g^{\mr}$.  Starting with $t^{n+1}=-v$, we construct a null-homotopy of $h^{\mr}$ by descending induction. Indeed, equality \eqref{equ2.3.0} gives $h^{n+1}=d_Y^nt^{n+1}$, and then by composing  $d^n_Z$ to  both sides, we obtain 
 		\begin{equation}\label{equ2.3.1} 
 			d_Y^n(h^n-t^{n+1}d_Z^n)=0.
 		\end{equation} Assume that  we have constructed the equality of form (\ref{equ2.3.1}) for $n=k$. Since $d^{k-1}_Y$ is a 
 		weak kernel of $d^{k}_Y$,  there exists a morphism $t^k\colon Z^k\to Y^{k-1}$ such that 
 		$$h^k=d_Y^{k-1}t^k+t^{k+1}d_Z^k.$$ Composing $d^{k-1}_Z$ to  both sides, we obtain $d_Y^{k-1}(h^{k-1}-t^{k}d_Z^{k-1})=0$ and the induction step follows. Thus we have $d_Y^0(h^0-t^1d_Z^0)=0$. Since $d_Y^0$ is a  monomorphism, we have $1_A=h^0=t^1d_Z^0.$ Thus $d_Z^0$ is a split monomorphism. 
 		By \cite[Proposition 2.6]{Ja}, the $n$-exact sequence $Z^{\mr}$ is contractible. Choose a morphism $s\colon Z^{n+1} \to Z^n$ with $d_Z^ns=1_{Z^{n+1}}$, and put $a=g^ns$. Then $$u=g^{n+1}d_Z^ns=d_X^ng^ns=d_X^na.$$
		It follows from (\ref{equ2.3.0}) that $d_Y^n(f^na+v)=0$. Hence there exists a morphism $b \colon Z^{n+1}\to Y^{n-1}$ such that $f^na+v=d_Y^{n-1}b$. If $n\ge 2$, this yields 
		\begin{equation}\label{formula1}
			\begin{bmatrix}
				u\\  v
			\end{bmatrix}=d_M^{n-1}\begin{bmatrix}
				-a\\  b
			\end{bmatrix}.
		\end{equation}
		If $n=1$, then we have 
		\begin{equation}\label{formula2}
			\begin{bmatrix}
			u\\  v
		\end{bmatrix}=d_M^{0}(-a+d_X^0b)
		\end{equation}	
		since $f^1d_X^0=d_Y^0$. Thus $d_M^{n-1}$ is a weak kernel of $d_M^{n}$. 
		
		Next we show that $d_M^{i-1}$ is a weak kernel of $d_M^i$ for any $1\le i <n$. Let $\begin{bmatrix}
			u & v
		\end{bmatrix}^{\top}\colon V \to X^{i+1}\oplus Y^i$ be a morphism such that $d_M^i\begin{bmatrix}
		u & v
		\end{bmatrix}^{\top}=0$, i.e. $$d_X^{i+1}u=0 \quad \text{and} \quad f^{i+1}u+d_Y^iv=0.$$ 
		Since $d_X^i$ is a weak kernel of $d_X^{i+1}$, we have  $u=d_X^ia$ for some $a\colon V\to X^i$. Thus 
		$d_Y^i(f^ia+v)=0$, and moreover $f^ia+v=d_Y^{i-1}b$ holds again for some $b\colon V\to Y^{i-1}$. 
		For $1\le i< n$, formula \eqref{formula1} holds with $n$
		replaced by $i$ when $i\ge2$, while \eqref{formula2} applies
		when $i=1$. In either case, we obtain a factorization through
		$d_M^{i-1}$.
		
		 Let $u\colon T \to X^1$ be a morphism such that $d_M^0u=0$. This implies that $u=d_X^0a$ for some  morphism $a\colon T \to X^0$ and thus $d_Y^0a=f^1u=0$. Since $d_Y^0$ is a monomorphism, we have that $a=0$ and so $u=0$. Thus $d_M^{0}$ is a monomorphism. Hence we have shown that ${M}^{\mr}_f$ is left $n$-exact.
		
		  Now we show right $n$-exactness of ${M}^{\mr}_f$. Take $1\le i\le n$ and a morphism
		$\begin{bmatrix}
			u & v
		\end{bmatrix}\colon  X^{i+1}\oplus Y^i\to V$  such that $\begin{bmatrix}
		u & v
		\end{bmatrix}d_M^{i-1}=0$. Equivalently, $$ud_X^i=vf^i \quad\text{and}\quad vd_Y^{i-1}=0;$$
		For $i=1$, the second equality follows by composing the first with $d_X^0$. Since $d_Y^i$ is a weak cokernel of $d_Y^{i-1}$, there exists a morphism $w\colon Y^{i+1}\to V$ such that $v=wd_Y^i$. Thus we have $(u-wf^{i+1})d_X^i=0$. 		
		If $i<n$, then $u-wf^{i+1}=rd_X^{i+1}$ for some morphism  $r\colon X^{i+2}\to V$. Thus $$\begin{bmatrix}
			u & v
		\end{bmatrix}=\begin{bmatrix}
		-r & w
		\end{bmatrix}d_M^i.$$ 		
		If $i=n$, then the epimorphism $d_X^n$ can be cancelled on the right, and hence
		$u=wf^{n+1}$. Moreover, we have $\begin{bmatrix}
			u & v
		\end{bmatrix}=wd_M^n$. Finally, $d_M^n$ is an epimorphism since $d_Y^n$ is an
		epimorphism. This proves that $M^{\mr}_f$ is
		$n$-exact.

		\emph{Step $2$}: We show that ${M}^{\mr}_f\in \mathscr{X}$. 		
			The preceding argument uses only $(\rm {E2'}^{op})(i)$. Dually, one can show that $(\rm {E2'})(i)$ implies that the mapping cocone of a
		morphism of $\mathscr{X}$-conflations with the identity in the degree $n+1$ is an $n$-exact sequence. Applying $(\rm {E2'})(i)$ to ${Y}^{\mr}$ along $d_X^0\colon A\to X^1$, we obtain  a morphism 	 		
		$p^{\mr}\colon Y^{\mr}\to {W}^{\mr}$ such that $W^{\mr}\in  \mathscr{X}$, $p^0=d_X^0$ and $p^{n+1}=1_{Y^{n+1}}$. 
		Its mapping cocone $N_p^{\mr}$ is therefore $n$-exact. 		
		Consider  the 
		morphism  $y^{\mr}\colon Y^{\mr}\to {M}^{\mr}_f$ given by
		\[
		y^0=d_X^0,\quad
		y^i=\begin{bmatrix}0\\1_{Y^i}\end{bmatrix}\ (1\leq i\leq n),
		\quad y^{n+1}=1_{Y^{n+1}}.
		\] 		
		
		To show ${N}^{\mr}_y$ is $n$-exact, it suffices to show that
		 there is an isomorphism 
		\begin{equation}\label{N_y an isomorphism}
			N_y^{\mr}\simeq
			X^{\mr}\oplus\bigoplus_{1\le i\le n} \Delta^i(Y^i),
		\end{equation}
		of complexes, where for an object $E\in\mathscr{C}$,  $\Delta^i(E)$ denotes $E\xrightarrow{1_E}E$ in degrees
		$i,i+1$ and zero elsewhere.	In fact, the complexes $\Delta^i(E)$ are contractible for $1\le i\le n$. 
			
		For $n=1$, the maps
		\[
		u^0=1_A,\quad
		u^1=\begin{bmatrix}1&f^1\\0&1\end{bmatrix},
		\quad
		u^2=\begin{bmatrix}-1&0\\0&1\end{bmatrix}
		\]
		give an isomorphism
		$N_y^{\mr}\simeq X^{\mr}\oplus \Delta^1(Y^1)$. 
		Hence $N_y^{\mr}$ is exact.
		
		For $n\ge2$, the mapping cocone $N_y^{\mr}$ is given by
		\begin{align*}{N}^{\mr}_y\colon\quad A&\xrightarrow{\begin{bmatrix}
				-d^0_Y\\  d_X^0
		\end{bmatrix}} Y^1\oplus X^1 \xrightarrow{\begin{bmatrix}
		  -d_Y^1 & 0 \\  0 & -d_X^1 \\ 1_{Y^1} & f^1
	\end{bmatrix}} Y^2\oplus X^2 \oplus Y^1 \xrightarrow{d_{N_y}^{2}} \cdots \\
	&\xrightarrow{d_{N_y}^{n-1}}  Y^n\oplus X^n \oplus Y^{n-1} \xrightarrow{\begin{bmatrix}
			0 & -d_X^n & 0 \\  1_{Y^n} & f^n & d_Y^{n-1} \\
	\end{bmatrix}} X^{n+1} \oplus Y^{n}, \end{align*} where
		\[
	d_{N_y}^{i}=
	\begin{bmatrix}
		-d^{i}_Y &0 &0\\0&-d^i_X & 0 \\ 1_{Y^i} & f^i & d_Y^{i-1}
	\end{bmatrix} \quad (2\le i \le n-1) . \]					
		Denote the complex on the right of \eqref{N_y an isomorphism} by $U^{\mr}$, which is given by 
			\begin{align*}U^{\mr}\colon\quad A&\xrightarrow{\begin{bmatrix}
					0 \\  d_X^0
			\end{bmatrix}} Y^1\oplus X^1 \xrightarrow{\begin{bmatrix}
			0&0 \\ 0&d_X^1\\1_{Y^1}&0  
		\end{bmatrix}} Y^2\oplus X^2 \oplus Y^1 \xrightarrow{d_U^2} \cdots \\
			&\xrightarrow{d_U^{n-1}}  Y^n\oplus X^n \oplus Y^{n-1} \xrightarrow{\begin{bmatrix}0&d_X^n&0\\1_{Y^n}&0&0\end{bmatrix}} X^{n+1} \oplus Y^{n}, \end{align*}	where			
		$$ 
		d_U^i=\begin{bmatrix}
			0&0&0\\0&d_X^i&0\\1_{Y^i}&0&0
		\end{bmatrix}\quad (2\le i\le n-1).
		$$
		Define a morphism  $u^{\mr}\colon N_y^{\mr}\to U^{\mr}$ given by
		$$
		u^0=1_A,\quad
		u^1=\begin{bmatrix}1&f^1\\0&1\end{bmatrix},$$$$
		u^i=
		\begin{bmatrix}
			1&f^i&d_Y^{i-1}\\
			0&(-1)^{i-1}&0\\
			0&0&1
		\end{bmatrix}\ (2\leq i\leq n),\quad
		u^{n+1}=
		\begin{bmatrix}(-1)^n&0\\0&1\end{bmatrix}.
		$$
		 A direct calculation gives the following commutative diagram.
				\[
				\begin{tikzcd}[ampersand replacement=\&]
						 A\dar[equal]\rar{d_{N_y}^0}
						\& (N_y)^1 \dar{{u^1}}\rar{d_{N_y}^1}
						\& (N_y)^2 \dar{{u^2}}\rar{}
						\& \cdots \rar{}
						\& (N_y)^n\dar{{u^n}}\rar{d_{N_y}^n}
						\& X^{n+1}\oplus Y^n\dar{u^{n+1}}\\			
						 A\rar{d_U^0}
						\& U^1\rar{d_U^1}
						\& U^2\rar{}
						\& \cdots\rar{}
						\& U^n\rar{d_U^n}
						\& X^{n+1}\oplus Y^n
					\end{tikzcd}
				\]	
		Each $u^i$ is invertible and thus $u^{\mr}$ is the desired isomorphism. 

		Since $N_y^{\mr}$ and $N_p^{\mr}$ are $n$-exact, the dual
		of \cite[Lemma~4.17]{HLN}, applied to $ y^{\mr}\colon Y^{\mr}\to {M}^{\mr}_f$ and $ p^{\mr}\colon Y^{\mr}\to {W}^{\mr}$
		in both orders, gives morphisms
		\[
		r^{\mr}\colon M_f^{\mr}\longrightarrow W^{\mr}
		\quad \text{and}\quad
		s^{\mr}\colon W^{\mr}\longrightarrow M_f^{\mr}
		\]
		with
		$r^0=s^0=1_{X^1}$
		and 
		$r^{n+1}=s^{n+1}=1_{Y^{n+1}}$. Since $M_f^{\mr}$ and $W^{\mr}$ are $n$-exact,
		by \cite[Proposition~4.10(2)]{HLN}, we have
		\[
		s^{\mr}r^{\mr}\sim1_{M_f^{\mr}}
		\quad\text{and}\quad
		r^{\mr}s^{\mr}\sim1_{W^{\mr}}.
		\]
		Thus $r^{\mr}$ and $s^{\mr}$ are mutually homotopy inverses fixing the two endpoints. Finally,
		since $W^{\mr}\in\mathscr X$, axiom $(\mathrm{EC'})$ yields
		$M_f^{\mr}\in\mathscr X$.	
	\end{proof}
	
	\begin{remark}\label{compare two axioms} For an $n$-exact category in the sense of Jasso \cite[Definition~4.2]{Ja},
		 axiom \cite[Definition 4.2$(\mathrm{E2}^{\op})$]{Ja} 
		 may be replaced by
		$(\mathrm{E2'}^{\op})(\mathrm{i})$.
		Indeed,  by the dual of
		\cite[Proposition~4.8(i)$\Rightarrow$(iv)]{Ja}, \cite[Definition 4.2$(\mathrm{E2}^{\op})$]{Ja} implies
		$(\mathrm{E2'}^{\op})(\mathrm{i})$.
		Conversely, let $f^{\mr}\colon X^{\mr}\to Y^{\mr}$ be
		a morphism given by $(\mathrm{E2'}^{\op})(\mathrm{i})$.
		Step~1 in the proof of Proposition~\ref{redundant}
		shows that its mapping cone  $M_f^{\mr}$ is $n$-exact, and thus
		 the last $n$ squares of $f^{\mr}$ form an
		$n$-pullback diagram. This proves \cite[Definition 4.2$(\mathrm{E2}^{\op})$]{Ja}.
		Dually, \cite[Definition 4.2$(\mathrm{E2})$]{Ja} 
		may be replaced by $(\mathrm{E2'})(\mathrm{i})$.	
		
		After omitting the redundant axioms
		$(\mathrm{E2'})(\mathrm{ii})$ and
		$(\mathrm{E2'}^{\mathrm{op}})(\mathrm{ii})$,
		Definition~\ref{reformulation of Jasso's axioms} agrees with Jasso's definition
		\cite[Definition~4.2]{Ja}, modified as above, except that
		closure under weak isomorphisms is replaced by
		$(\mathrm{EC'})$ and $(\mathrm{EI})$.
		We shall therefore verify these two conditions in what follows.
	\end{remark}

		\begin{proposition}[Strong obscure axiom] \label{Strong obscure axiom} Let $(\mathscr{C},\mathscr{X})$ be an $n$-exact category, and suppose that $\mathscr{C}$  is weakly idempotent complete or $n\ge 2$. Set $f\in \mathscr{C}(X,Y)$ and $g\in \mathscr{C}(Y,Z)$. Then
			the following hold.
			\begin{itemize}
				\item [(a)] If $gf$ is an $\mathscr{X}$-inflation, then so is $f$.
				\item [(b)]	If $gf$ is an $\mathscr{X}$-deflation, then so is $g$.
			\end{itemize}
		\end{proposition}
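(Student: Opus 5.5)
We prove (b); statement (a) is dual. Throughout we use the following notation. Suppose $gf\colon X\to Z$ is an $\mathscr X$-deflation. Choose an $\mathscr X$-conflation
\[X^{\mr}\colon\quad X^0\to X^1\to\cdots\to X^{n-1}\xrightarrow{d_X^{n-1}} X\xrightarrow{gf} Z,\]
so that $X^n=X$ and $X^{n+1}=Z$.

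\emph{Step 1: $[\,g\ \ gf\,]$ is an $\mathscr X$-deflation.} Apply $(\mathrm{E2'}^{\op})(\mathrm i)$ to $X^{\mr}$ along $g\colon Y\to Z$. This gives an $\mathscr X$-conflation $V^{\mr}$ with $V^0=X^0$ and $V^{n+1}=Y$, together with a morphism $q^{\mr}\colon V^{\mr}\to X^{\mr}$ such that $q^0=1$ and $q^{n+1}=g$. By Proposition~\ref{redundant}, the mapping cone $M_q^{\mr}$ lies in $\mathscr X$. By Definition~\ref{DefMapCone}, its last differential is
\[
d_M^n=[\,q^{n+1}\ \ d_X^n\,]=[\,g\ \ gf\,]\colon Y\oplus X\to Z .
\]
Hence $[\,g\ \ gf\,]$ is an $\mathscr X$-deflation. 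I expect this to be the key idea: we cannot work with $g$ directly, and the mapping cone supplies the right substitute.

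\emph{Step 2: remove the shear.} We have $[\,g\ \ gf\,]=[\,g\ \ 0\,]\circ\theta$, where
\[
\theta=\begin{bmatrix}1&f\\0&1\end{bmatrix}\colon Y\oplus X\to Y\oplus X
\]
is an automorphism. Composing the degree-$n$ part of $M_q^{\mr}$ with $\theta^{-1}$ gives an isomorphism of complexes fixing both end terms. Every isomorphism is a homotopy equivalence, so $(\mathrm{EC'})$ shows that $[\,g\ \ 0\,]\colon Y\oplus X\to Z$ is an $\mathscr X$-deflation. Call the resulting conflation $N^{\mr}$.

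\emph{Step 3: cancel the summand $X$.} The last differential of $N^{\mr}$ vanishes on the summand $X$, and $N^{\mr}$ is left $n$-exact. Hence the inclusion $X\to N^n=Y\oplus X$ factors through $d_N^{n-1}$. This lets us split off a contractible piece $\Delta^{n-1}(X)$, so that we can write
\[
N^{\mr}\cong N'^{\mr}\oplus \Delta^{n-1}(X),\qquad N'^n=Y,\quad d_{N'}^n=g.
\]
Here we use an explicit change of basis in degrees $n-1$ and $n$, in the style of the isomorphism \eqref{N_y an isomorphism}.

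\begin{itemize}
\item[(i)] For $n\ge2$, the degree $n-1$ lies strictly between the end terms $0$ and $n+1$. So the projection $N^{\mr}\to N'^{\mr}$ is a homotopy equivalence fixing both end terms. The point to check here is that the splitting in degree $n-1$ can be realized inside $N^{n-1}$; if it cannot, we first enlarge by $\Delta^{n-1}(X)$ and then project. Then $(\mathrm{EC'})$ gives $N'^{\mr}\in\mathscr X$, so $g$ is an $\mathscr X$-deflation.
\item[(ii)] For $n=1$, the degree $n-1=0$ is an end term, so this argument is unavailable. Instead we use weak idempotent completeness, as in \cite[Proposition~7.6]{B}. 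The kernel of $[\,g\ \ 0\,]$ contains the summand $X$, and the split epimorphism onto it has a kernel. This identifies $g$ as the cokernel of an $\mathscr X$-inflation, obtained by a pullback.
\end{itemize}

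I expect the main obstacle to be the bookkeeping in Step 3 for $n\ge 2$: one has to write down the homotopy equivalence explicitly and check that it is compatible with the identities on the end terms.
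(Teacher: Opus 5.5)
Your Steps 1 and 2 are correct. The cone of $q^{\mr}$ does end in $[\,g\ \ gf\,]$, and the shear gives an $\mathscr X$-conflation $N^{\mr}$ ending in $[\,g\ \ 0\,]$; the paper reaches the same point via $(\mathrm{E1}^{\op})$.

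The gap is Step~3(i) when $\mathscr C$ is not weakly idempotent complete. That is exactly the case the alternative hypothesis $n\ge2$ is there to cover. Left $n$-exactness gives $s\colon X\to N^{n-1}$ with $d_N^{n-1}s=\begin{bmatrix}0\\1_X\end{bmatrix}$. So $s$ is a split monomorphism with retraction $r=\pi_X d_N^{n-1}$. A decomposition $N^{\mr}\cong N'^{\mr}\oplus\Delta^{n-1}(X)$ forces $N^{n-1}\cong N'^{n-1}\oplus X$, i.e.\ a splitting of the idempotent $1-sr$. Without weak idempotent completeness this splitting need not exist.

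Your fallback does not repair this. Adding $\Delta^{n-1}(X)$ puts another copy of $X$ into degree $n$, so it never yields a complex with $Y$ in degree $n$. In general the surplus $X$ has to be moved one degree down rather than cancelled in place.

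Concretely, put $b=\pi_Y d_N^{n-1}$ and let $\widetilde N^{\mr}$ be
\[
\cdots\longrightarrow N^{n-2}\oplus X\xrightarrow{[\,d_N^{n-2}\ \ s\,]}N^{n-1}\xrightarrow{\;b\;}Y\xrightarrow{\;g\;}Z.
\]
It agrees with $N^{\mr}$ in degrees below $n-2$, with differential $\begin{bmatrix}d_N^{n-3}\\0\end{bmatrix}$ into degree $n-2$ when $n\ge3$. For $n=2$ it starts at $N^0\oplus X$.

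Since $bs=0$ and $rd_N^{n-2}=0$, there is an isomorphism of complexes $N^{\mr}\oplus\Delta^{n-2}(X)\cong\widetilde N^{\mr}\oplus\Delta^{n-1}(X)$ fixing both end terms. It is the identity in every degree except $n-1$, where it is the involution $\begin{bmatrix}1-sr&s\\ r&0\end{bmatrix}$ of $N^{n-1}\oplus X$.

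For $n\ge3$ both contractible summands lie in middle degrees, so $(\mathrm{EC}')$ gives $\widetilde N^{\mr}\in\mathscr X$. For $n=2$, however, $\Delta^0(X)$ occupies the left end term, which $(\mathrm{EC}')$ can never change. There you also need $N^{\mr}\oplus\Delta^0(X)\in\mathscr X$. This holds because $\Delta^0(X)$ is weakly isomorphic to the zero sequence and $\mathscr X$ is closed under direct sums \cite[Proposition~4.6]{Ja}.

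The paper avoids all of this. Its complex $A_2\oplus X\to A_1\to Y$ is exactly $\widetilde N^{\mr}$ truncated at $Y$, and it verifies directly that this is an $n$-kernel of $g$. It then concludes from the dual of Jasso's obscure axiom \cite[Proposition~4.11]{Ja}, since $gf$ is an $\mathscr X$-deflation.
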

	\begin{proof} By \cite[Proposition~4.34]{HLN}, every $n$-exact category is
		$n$-exangulated. Thus the assertion follows from
		\cite[Theorem~3.5]{Kla}. For the convenience of the reader, we include  a direct proof. We only show (b) as (a) can be proved similarly.
		For $n=1$, by assumption, $\mathscr{C}$  is weakly idempotent complete and this is \cite[Proposition 7.6]{B}. So we assume $n\ge 2$ and  that $gf$ is an $\mathscr{X}$-deflation. By $( \rm E1^{\op})$,  the factorization
		\[
		\begin{bmatrix}
			0 & g
		\end{bmatrix}
		=\begin{bmatrix}
			1_Z & g
		\end{bmatrix}
		\begin{bmatrix}
			gf&0\\
			0&1_Y
		\end{bmatrix}
		\begin{bmatrix}
			1_X&0\\
			-f&1_Y
		\end{bmatrix}
		\]
		shows that $\begin{bmatrix}
			0 & g
		\end{bmatrix}\colon X\oplus Y\to Z$ is an $\mathscr{X}$-deflation. Indeed, the first factor is a split
		epimorphism with kernel $\begin{bmatrix}
			-g & 1_Y
		\end{bmatrix}^\top$, the second is a direct sum of $\mathscr{X}$-deflations, and the third is an isomorphism. Choose an $\mathscr{X}$-conflation
		\[
		A_n\xrightarrow{d_n}\cdots \xrightarrow{d_3}A_2
		\xrightarrow{d_2}A_1
		\xrightarrow{\begin{bmatrix}
				a\\  b
		\end{bmatrix}}X\oplus Y
		\xrightarrow{\begin{bmatrix}
				0 & g
		\end{bmatrix}}Z.
		\]
		Since $\begin{bmatrix}
			a & b
		\end{bmatrix}^\top$ is a weak kernel of $\begin{bmatrix}
		0 & g
		\end{bmatrix}$, the composition $$X\xrightarrow{\begin{bmatrix}
			1 \\ 0
			\end{bmatrix}} X \oplus Y \xrightarrow{\begin{bmatrix}
			0 & g
		\end{bmatrix}} Z$$ vanishes and thus
		there exists a morphism
		$s\colon X\to A_1$ such that
		\[
		as=1_X\quad\text{and}\quad bs=0.
		\]
		
		We claim that $g$ has the following $n$-kernel:
		\[
		\begin{cases}
			A_n\longrightarrow\cdots\longrightarrow A_3
			\xrightarrow{\begin{bsmallmatrix}d_3\\0\end{bsmallmatrix}}
			A_2\oplus X
			\xrightarrow{\begin{bsmallmatrix}d_2&s\end{bsmallmatrix}}
			A_1\xrightarrow{b}Y,&n\geq3,\\[4pt]
			A_2\oplus X
			\xrightarrow{\begin{bsmallmatrix}d_2&s\end{bsmallmatrix}}
			A_1\xrightarrow{b}Y,&n=2.
		\end{cases}
		\]	
		 Since $\begin{bmatrix}
		 	a & b
		 \end{bmatrix}^\top$ is a weak kernel of $\begin{bmatrix}
		 0 & g
		 \end{bmatrix}$,  
		we have that $b$ is a weak kernel of $g$. Next, let $u\colon T\to A_1$ be a morphism
		such that $bu=0$. Then
		\[
		\begin{bmatrix}
			a\\  b
		\end{bmatrix}(u-sau)=0.
		\]
		Hence $u-sau=d_2v$ for some morphism $v\colon T\to A_2$, so $u$ factors
		through $\begin{bmatrix}
			d_2 & s
		\end{bmatrix}$. Finally, let $ \begin{bmatrix}
		v&   w
		\end{bmatrix}^\top\colon T'\to A_2\oplus X $ be a morphism such that
		$d_2v+sw=0$. Composing both sides with $a$,	we have	
		$w=a(d_2v+sw)=0$ and thus $d_2v=0$.
		If $n\geq3$, then $v$  factors through $d_3$; if $n=2$,
		the monomorphism $d_2$ gives $v=0$ and thus $\begin{bmatrix}
			d_2 & s
		\end{bmatrix}$ is a monomorphism. The remaining exactness conditions 
		follow from the chosen $\mathscr X$-conflation.		
		Thus $g$ has an $n$-kernel. Since $gf$ is an $\mathscr{X}$-deflation,  $g$ is an $\mathscr{X}$-deflation by the dual of 
		\cite[Proposition~4.11]{Ja}.
	\end{proof}
	
		\begin{definition} \cite{Ja} Let $(\mathscr{C},\mathscr{X})$ be an $n$-exact category.
		\begin{itemize}
			\item [(a)] An object $P\in\mathscr{C}$ is called  
			\emph{$\mathscr{X}$-projective} if for each $\mathscr{X}$-deflation $f\colon M\to
			N$, the induced morphism
			$${\mathscr{C}}(P,f)\colon {\mathscr{C}}(P,M)\to{\mathscr{C}}(P,N)$$	
			is an epimorphism. The notion of 	\emph{$\mathscr{X}$-injective} is defined dually. 			
			Denote by $\mathscr{P}$ (resp. $\mathscr{I}$)   the subcategory of $\mathscr{X}$-projectives (resp. $\mathscr{X}$-injectives). 
			
			\item [(b)] We say that $\mathscr{C}$ \emph{has enough $\mathscr{X}$-projectives} if for any $M \in \mathscr{C}$, there exists an $\mathscr{X}$-conflation
			$
			N\to { P^1\xrightarrow{}   \cdots \xrightarrow{} P^n\to M},
			$
			such that $P^1,\dots,P^n\in \mathscr{P}$.
			The notion of  \emph{having  enough $\mathscr{X}$-injectives} is defined dually.
			\item [(c)] An $n$-exact category is \emph{Frobenius} if it has enough $\mathscr{X}$-projectives, enough $\mathscr{X}$-injectives, and the class of $\mathscr{X}$-projectives coincides with that of $\mathscr{X}$-injectives.
		\end{itemize}
	\end{definition}
	
		Let $\mathscr{C}$ be an additive category, and $\mathscr{F}$ be a subcategory of $\mathscr{C}$. Recall that a 
	morphism $f\colon A\to B$ in $\mathscr{C}$ is called  \emph{$\mathscr{F}$-monic} if the induced morphism
	$\mathscr{C}(B,F)\xrightarrow{}{\mathscr{C}}(A,F)$ is an epimorphism for all $F\in\mathscr{F}$.
	The notion of \emph{$\mathscr{F}$-epic} is defined dually.

	\begin{lemma} \label{F-monic and F-epic}
		Let $(\mathscr{C},\mathscr{X})$ be an $n$-exact category, and $\mathscr{F}\subset\mathscr{C}$ a subcategory. Let $f^{\mr}\colon X^{\mr} \to Y^{\mr}$ be a morphism of $\mathscr{X}$-conflations. The following hold.
		\begin{itemize}
			\item [(a)] If $f^0=1$ and $d_Y^n$ is $\mathscr{F}$-epic, then $d_X^n$ is $\mathscr{F}$-epic.
			\item [(b)]  If $f^{n+1}=1$ and $d_X^0$ is $\mathscr{F}$-monic, then $d_Y^0$ is $\mathscr{F}$-monic. 
		\end{itemize}
	\end{lemma}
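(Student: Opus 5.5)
The plan is to use the mapping cone of $f^{\mr}$ together with the weak kernel property established in Step~1 of the proof of Proposition~\ref{redundant}. I prove (a); part (b) is the dual statement and follows by the dual argument with the mapping cocone.

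\emph{Setting up.} Assume $f^0=1_A$ and that $d_Y^n$ is $\mathscr F$-epic. Fix $F\in\mathscr F$ and a morphism $h\colon F\to X^{n+1}$. We must factor $h$ through $d_X^n$. The composite $f^{n+1}h\colon F\to Y^{n+1}$ lifts along the $\mathscr F$-epic morphism $d_Y^n$. So there is $k\colon F\to Y^n$ with
\[
f^{n+1}h=d_Y^nk.
\]

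\emph{Using the mapping cone.} Consider the mapping cone $M_f^{\mr}$ of Definition~\ref{DefMapCone}. Since $X^{\mr},Y^{\mr}\in\mathscr X$ and $f^0=1_A$, Step~1 of the proof of Proposition~\ref{redundant} (equivalently, axiom $(\mathrm{E2'}^{\op})(\mathrm{ii})$) shows that $M_f^{\mr}$ is $n$-exact. In particular, $d_M^{n-1}$ is a weak kernel of $d_M^n=[\,f^{n+1}\ \ d_Y^n\,]$.

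The morphism $[\,h\ \ {-k}\,]^{\top}\colon F\to X^{n+1}\oplus Y^n$ satisfies $d_M^n[\,h\ \ {-k}\,]^{\top}=f^{n+1}h-d_Y^nk=0$. Hence it factors through $d_M^{n-1}$.

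\emph{Reading off the factorization.} We compare first components.
\begin{itemize}
\item If $n\ge2$, write $[\,h\ \ {-k}\,]^{\top}=d_M^{n-1}[\,a\ \ b\,]^{\top}$ with $a\colon F\to X^n$. The first row of $d_M^{n-1}$ is $[\,-d_X^n\ \ 0\,]$, so $h=d_X^n(-a)$.
\item If $n=1$, write $[\,h\ \ {-k}\,]^{\top}=d_M^0c$ with $c\colon F\to X^1$. Since $d_M^0=[\,-d_X^1\ \ f^1\,]^{\top}$, we get $h=d_X^1(-c)$.
\end{itemize}
In either case $h$ factors through $d_X^n$. Since $F\in\mathscr F$ and $h$ were arbitrary, $d_X^n$ is $\mathscr F$-epic.

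The only delicate point is that $M_f^{\mr}$ has the weak kernel property at degree $n$. This is exactly what Step~1 of the proof of Proposition~\ref{redundant} provides, so no new obstacle arises.
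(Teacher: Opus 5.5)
Your proposal is correct and follows the paper's proof: lift $f^{n+1}h$ along $d_Y^n$, observe that $\begin{bmatrix} h & -k\end{bmatrix}^{\top}$ is killed by $d_M^n$, factor it through $d_M^{n-1}$ using the $n$-exactness of $M_f^{\mr}$ (from $(\mathrm{E2'}^{\op})(\mathrm{ii})$), and read off the first component. Your separate treatment of $n=1$ just spells out what the paper's ``comparing first components'' covers uniformly.
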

	\begin{proof}  We prove (a). Let $M_f^{\mr}$ be the mapping cone of $f^{\mr}$; see Definition \ref{DefMapCone}.
		Let $u\colon F\to X^{n+1}$ be a morphism with $F\in \mathscr{F}$. Since $d_Y^n$ is $\mathscr{F}$-epic, there exists $v\colon F\to  Y^{n}$ such that $f^{n+1}u=d_Y^{n}v$, that is, ${\begin{bmatrix}
				u & -v
		\end{bmatrix}}^{\top}$ is killed by $d_M^n$. Hence ${\begin{bmatrix}
				u & -v
		\end{bmatrix}}^{\top}$ factors through $d_M^{n-1}$. Comparing first components gives $u=-d_X^nt$ for some  $t\colon F\to X^n$. Hence $d_X^n$ is $\mathscr F$-epic, proving (a). The proof of (b) is dual.	
	\end{proof}
	
	Let $(\mathscr C,\mathscr X)$ be an $n$-exact category, and let
	$\mathscr F$ be a subcategory of $\mathscr C$. With a view to
	constructing an $n$-exact substructure in which every object of
	$\mathscr F$ is projective and injective, let
	$\mathscr X^{\mathrm{rel}}$ consist of all
	$\mathscr X$-conflations
	\[
	X^{\mr}\colon\quad
	X^0\xrightarrow{d_X^0}X^1\rightarrow\cdots
	\rightarrow X^n\xrightarrow{d_X^n}X^{n+1}
	\]
	such that $d_X^0$ is $\mathscr F$-monic and $d_X^n$ is
	$\mathscr F$-epic; compare \cite[Lemmas 4.2 and 4.3]{LZ}.
	
	\begin{proposition}\label{rel exact}
			Let $(\mathscr{C},\mathscr{X})$ be an $n$-exact category, and $\mathscr{F}$ be a subcategory of $\mathscr{C}$. 
			Then the pair $(\mathscr C,\mathscr X^{\mathrm{rel}})$ is an $n$-exact category. 
	\end{proposition}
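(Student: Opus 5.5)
We verify the axioms of Definition~\ref{reformulation of Jasso's axioms} for $\mathscr X^{\mathrm{rel}}$. By Proposition~\ref{redundant}, it suffices to check $(\mathrm{EC}')$, $(\mathrm{E0})$, $(\mathrm{E1})$, $(\mathrm{E1}^{\op})$, $(\mathrm{E2}')(\mathrm{i})$, $(\mathrm{E2'}^{\op})(\mathrm{i})$ and $(\mathrm{EI})$.

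\textbf{Inherited axioms.} Axiom $(\mathrm{EI})$ is inherited from $\mathscr X$: its hypothesis only requires $Y^{\mr}\in\mathscr X^{\mathrm{rel}}\subset\mathscr X$ and $X^{\mr}$ $n$-exact, and its conclusion does not mention the class. Axiom $(\mathrm{E0})$ holds trivially, since zero morphisms between zero objects are $\mathscr F$-monic and $\mathscr F$-epic. For $(\mathrm{EC}')$, take a homotopy equivalence $f^{\mr}\colon X^{\mr}\to Y^{\mr}$ fixing both end terms. Then $X^{\mr}\in\mathscr X$ if and only if $Y^{\mr}\in\mathscr X$. Moreover, Lemma~\ref{F-monic and F-epic}, applied to $f^{\mr}$ and to a homotopy inverse $g^{\mr}$, transports $\mathscr F$-monicity of the first differential and $\mathscr F$-epicity of the last one in both directions, since $f^0=1$ and $f^{n+1}=1$. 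Alternatively, one checks this directly from $d_Y^0=f^1d_X^0$ and $d_X^0=g^1d_Y^0$.

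\textbf{Composition.} For $(\mathrm{E1})$, let $d$ and $d'$ be $\mathscr X^{\mathrm{rel}}$-inflations. Their composite $d'd$ is an $\mathscr X$-inflation, and it is $\mathscr F$-monic because composites of $\mathscr F$-monics are $\mathscr F$-monic. It remains to see that some $\mathscr X$-conflation starting with $d'd$ has $\mathscr F$-epic last differential. Here I would use that any $\mathscr X$-conflation starting with a given $\mathscr F$-monic inflation $d_X^0$ automatically has $\mathscr F$-epic deflation when the sequence is suitably chosen. More concretely, I would show the following claim: for an $\mathscr X$-conflation $X^{\mr}$, $d_X^0$ is $\mathscr F$-monic if and only if $d_X^n$ is $\mathscr F$-epic.

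\textbf{Main obstacle.} Proving this equivalence is the key point, and it is false in general. So instead I would verify $(\mathrm{E1})$ directly. Following Jasso's construction of the conflation for a composite (via $(\mathrm{E2})$ and a mapping cone), I would track $\mathscr F$-epicity of the final deflation using Lemma~\ref{F-monic and F-epic} and a diagram chase with $\mathscr C(F,-)$ for $F\in\mathscr F$. This is the step I expect to be most delicate. The idea is that the relevant final term is built from the deflations of the two given $\mathscr X^{\mathrm{rel}}$-conflations via pushout-type diagrams. Surjectivity of $\mathscr C(F,-)$ along each of them then combines, using weak-kernel properties of the mapping cone exactly as in the proof of Lemma~\ref{F-monic and F-epic}. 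Axiom $(\mathrm{E1}^{\op})$ is dual.

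\textbf{Pullbacks and pushouts.} For $(\mathrm{E2'}^{\op})(\mathrm{i})$, let $Y^{\mr}\in\mathscr X^{\mathrm{rel}}$ and $f^{n+1}\colon X^{n+1}\to Y^{n+1}$, and take $X^{\mr}\in\mathscr X$ as produced by $(\mathrm{E2'}^{\op})(\mathrm{i})$ for $\mathscr X$, with $f^0=1$.
\begin{itemize}
\item Lemma~\ref{F-monic and F-epic}(a) shows that $d_X^n$ is $\mathscr F$-epic.
\item $d_X^0$ is $\mathscr F$-monic because $d_Y^0=f^1d_X^0$ is $\mathscr F$-monic: any $h\colon X^0=Y^0\to F$ extends along $d_Y^0$, and hence along $d_X^0$.
\end{itemize}
Thus $X^{\mr}\in\mathscr X^{\mathrm{rel}}$. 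Axiom $(\mathrm{E2}')(\mathrm{i})$ is dual, using Lemma~\ref{F-monic and F-epic}(b).
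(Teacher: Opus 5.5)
Your handling of $(\mathrm{E0})$, $(\mathrm{EI})$, $(\mathrm{EC}')$, $(\mathrm{E2'})(\mathrm{i})$ and $(\mathrm{E2'}^{\mathrm{op}})(\mathrm{i})$ is correct and matches the paper. In $(\mathrm{EC}')$ you need the homotopy inverse $g^{\mr}$ to satisfy $g^0=1$ and $g^{n+1}=1$; this can be arranged by Claim~2.8 of Herschend--Liu--Nakaoka.

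\textbf{The gap.} Closure under composition, $(\mathrm{E1})$ and $(\mathrm{E1}^{\mathrm{op}})$, is the only nontrivial part of the statement, and you state an intention but give no argument. The plan you describe also does not match the situation. There is no ``construction of the conflation for a composite'' to follow: in Jasso's framework $(\mathrm{E1})$ is an axiom. For $\mathscr X^{\mathrm{rel}}$-inflations $d\colon A\to B$ and $d'\colon B\to C$, it only gives you some $Z^{\mr}\in\mathscr X$ with $d_Z^0=d'd$. The last term of $Z^{\mr}$ is not built from the given conflations by pushout-type diagrams. $\mathscr F$-monicity of $d_Z^0$ is automatic. $\mathscr F$-epicity of $d_Z^n$ is the whole content, and it needs comparison morphisms that you never construct.

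\textbf{The missing argument.} Take $X^{\mr},Y^{\mr}\in\mathscr X^{\mathrm{rel}}$ with $d_X^0=d$ and $d_Y^0=d'$.
Right $n$-exactness of $X^{\mr}$ extends $\alpha^0=1_A$, $\alpha^1=d'$ to a morphism $\alpha^{\mr}\colon X^{\mr}\to Z^{\mr}$. By $(\mathrm{E2'}^{\mathrm{op}})(\mathrm{ii})$ for $\mathscr X$, its mapping cone
\[
M_\alpha^{\mr}\colon\quad B\xrightarrow{\begin{bmatrix}-d_X^1 & d'\end{bmatrix}^{\top}}X^2\oplus C\longrightarrow\cdots\longrightarrow X^{n+1}\oplus Z^n\xrightarrow{\begin{bmatrix}\alpha^{n+1} & d_Z^n\end{bmatrix}}Z^{n+1}
\]
lies in $\mathscr X$.
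Right $n$-exactness of $M_\alpha^{\mr}$ extends $\beta^0=1_B$, $\beta^1=\begin{bmatrix}0 & 1_C\end{bmatrix}$ to a morphism $\beta^{\mr}\colon M_\alpha^{\mr}\to Y^{\mr}$. Lemma~\ref{F-monic and F-epic}(a), applied to $\beta^{\mr}$, shows that $\begin{bmatrix}\alpha^{n+1} & d_Z^n\end{bmatrix}$ is $\mathscr F$-epic, because $d_Y^n$ is.
Now let $u\colon F\to Z^{n+1}$ with $F\in\mathscr F$. Write $u=\alpha^{n+1}a+d_Z^nb$, and lift $a=d_X^nc$ using that $d_X^n$ is $\mathscr F$-epic. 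Since $\alpha^{n+1}d_X^n=d_Z^n\alpha^n$, this gives $u=d_Z^n(\alpha^nc+b)$. Hence $Z^{\mr}\in\mathscr X^{\mathrm{rel}}$.

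This is the dual of the paper's proof of $(\mathrm{E1}^{\mathrm{op}})$. There, the mapping cocone $N_\alpha^{\mr}$, a morphism $X^{\mr}\to N_\alpha^{\mr}$, and Lemma~\ref{F-monic and F-epic}(b) show that the first differential of the composite conflation is $\mathscr F$-monic. What your sketch lacks is the passage through $M_\alpha^{\mr}\in\mathscr X$ and the second comparison map $\beta^{\mr}$. These are what allow the two $\mathscr F$-epicity hypotheses to be combined.
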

	\begin{proof}
		We verify the axioms in Definition~\ref{reformulation of Jasso's axioms}. 		
		To show $(\mathrm{EC}')$,
		let $f\colon{X}^{\mr}\to {Y}^{\mr}$ be a homotopy equivalence in ${\bf{C}}_{\mathscr{C}}^{n+2}$ such that $f^{0}=1$ and $f^{n+1}=1$. 
		We show that ${X}^{\mr} \in \mathscr{X}^{\mathrm{rel}}$  if and only if ${Y}^{\mr} \in \mathscr{X}^{\mathrm{rel}}$. We may assume ${Y}^{\mr} \in \mathscr{X}^{\mathrm{rel}}$, since the other case can be shown dually. Applying $(\mathrm{EC}')$ for $(\mathscr{C},\mathscr{X})$, we have ${X}^{\mr} \in \mathscr{X}$. By Lemma \ref{F-monic and F-epic}(a) for $f^{\mr}$, 		$d_X^n$ is $\mathscr{F}$-epic since $d_Y^n$ is $\mathscr{F}$-epic.  We can choose a homotopy inverse $g^{\mr}\colon Y^{\mr}\to X^{\mr}$	of $f^{\mr}$ such that $g^0=1$ and $g^{n+1}=1$ by \cite[Claim 2.8]{HLN}.  Applying Lemma \ref{F-monic and F-epic}(b) to $g^{\mr}$, 	$d_X^0$ is $\mathscr{F}$-monic since $d_Y^0$ is $\mathscr{F}$-monic. Thus ${X}^{\mr} \in \mathscr{X}^{\mathrm{rel}}$.		Hence $(\mathrm{EC}')$ holds.		
		Axioms $(\mathrm E0)$ and $(\mathrm{EI})$ are inherited from $\mathscr X$.
		
		Let $V^{\mr}\in\mathscr X^{\mathrm{rel}}$ and
		$c\colon C\to V^{n+1}$. By
		$(\mathrm{E2'}^{\mathrm{op}})(\mathrm i)$ for $\mathscr X$,
		there is a morphism
		$\alpha^{\mr}\colon U^{\mr}\rightarrow V^{\mr}$
		of $\mathscr X$-conflations such that
		$\alpha^0=1$ and $\alpha^{n+1}=c$. Since
		$d_V^0=\alpha^1d_U^0$ and  $d_V^0$ is $\mathscr F$-monic,  $d_U^0$ is
		$\mathscr F$-monic. By
		Lemma~\ref{F-monic and F-epic}(a), $d_U^n$ is
		$\mathscr F$-epic since $d_V^n$ is
		$\mathscr F$-epic. Thus
		$U^{\mr}\in\mathscr X^{\mathrm{rel}}$, proving
		$(\mathrm{E2'}^{\mathrm{op}})(\mathrm i)$. Its dual follows
		similarly. Proposition~\ref{redundant} then gives axioms
		$(\mathrm{E2'})(\mathrm{ii})$ and $(\mathrm{E2'}^{\mathrm{op}})(\mathrm{ii})$.
		
		It remains to verify $(\mathrm E1)^{\mathrm{op}}$.
		Let $\delta\colon A\to B$ and $\gamma\colon B\to C$ be
		$\mathscr X^{\mathrm{rel}}$-deflations. Take
		$X^{\mr},Y^{\mr}\in\mathscr X^{\mathrm{rel}}$ with
		$d_X^n=\delta$ and $d_Y^n=\gamma$. By
		$(\mathrm E1)^{\mathrm{op}}$ for $\mathscr X$, choose
		$Z^{\mr}\in\mathscr X$ with $d_Z^n=\gamma\delta$.
		Since $\mathscr F$-epic morphisms are closed under
		composition, $d_Z^n$ is $\mathscr F$-epic.		
		By the weak kernel properties of $Y^{\mr}$, the morphisms
		$\alpha^n=\delta$ and $\alpha^{n+1}=1_C$ extend to a morphism
		$\alpha^{\mr}\colon Z^{\mr}\longrightarrow Y^{\mr}.$
		By $(\mathrm{E2'})(\mathrm{ii})$ for $\mathscr X$, its
		mapping cocone $N_\alpha^{\mr}$ belongs to $\mathscr X$, which is given by
			\[
			N_\alpha^{\mr}\colon\quad
			Z^0
			\xrightarrow{\begin{bmatrix}
				-d_Z^0\\ \alpha^0
			\end{bmatrix}}
			Z^1\oplus Y^0
			\xrightarrow{d_N^1}
			\cdots
			\xrightarrow{d_N^{n-1}}
			A\oplus Y^{n-1}
			\xrightarrow{\begin{bmatrix}
			\alpha^n&d_Y^{n-1}
		\end{bmatrix}}
			B.
			\]		
		The morphisms $\beta^n=\begin{bmatrix}1_A &0\end{bmatrix}^{\top}$
		and $\beta^{n+1}=1_B$ extend to a morphism
		$\beta^{\mr}\colon X^{\mr}\rightarrow N_\alpha^{\mr}.$
		Since $d_X^0$ is $\mathscr F$-monic,
		 $d_N^0$ is $\mathscr F$-monic by Lemma~\ref{F-monic and F-epic}(b).		
		Let $h\colon Z^0\to F$ with $F\in\mathscr F$. We have
		$h=-u d_Z^0+v\alpha^0$
		for some morphisms $u\colon Z^1\to F$ and $v\colon Y^0\to F$.
		Since $d_Y^0$ is $\mathscr F$-monic, 
		$v=w d_Y^0$ for some $w\colon Y^1\to F$. Thus
		$d_Y^0\alpha^0=\alpha^1d_Z^0$ yields
		$h=(-u+w\alpha^1)d_Z^0.$
		Thus $d_Z^0$ is $\mathscr F$-monic, and hence
		$Z^{\mr}\in\mathscr X^{\mathrm{rel}}$.
		Therefore $\gamma\delta$ is an
		$\mathscr X^{\mathrm{rel}}$-deflation, proving
		$(\mathrm E1)^{\mathrm{op}}$. The proof of $(\mathrm E1)$ is dual. 
	\end{proof}

		\subsection{$n$-cluster tilting subcategories} 				
		Let $\mathscr{A}$ be an additive category, and $\mathscr{C}$ be a subcategory of $\mathscr{A}$.	
	A morphism $f\colon A\to B$ in $\mathscr{A}$ is called a
	\emph{left $\mathscr C$-approximation} of $A$ if
	$B\in\mathscr C$ and $f$ is $\mathscr C$-monic.
	The subcategory $\mathscr C$ is called
	\emph{covariantly finite} if every object of $\mathscr A$
	admits a left $\mathscr C$-approximation.
	The notions of a \emph{right $\mathscr C$-approximation}
	and a \emph{contravariantly finite} subcategory are
	defined dually.
	We say that $\mathscr C$ is \emph{functorially finite} if it is
	both covariantly and contravariantly finite.
	
	\begin{definition}\cite[Definition 4.13]{Ja} A subcategory $\mathscr{C}$  of an exact category  $(\mathscr{A},\mathscr{E})$ is called  an \emph{$n$-cluster tilting subcategory} if the following conditions are satisfied.	
		\begin{itemize}
			\item [(a)] $\mathscr{C}$ is a functorially finite subcategory of $\mathscr{A}$ such that	
			\begin{align*}
				\mathscr{C}&=\{X\in\mathscr{A}\mid \Ext_{\mathscr{A}}^i(C,X)=0\text{ for any } C\in \mathscr{C} \text{ and } 1\le i<n \}\\
				&=\{X\in\mathscr{A}\mid \Ext_{\mathscr{A}}^i(X,C)=0\text{ for any } C\in \mathscr{C} \text{ and } 1\le i<n \}.
			\end{align*}
			\item [(b)]  $\mathscr{C}$ is both a generating and a cogenerating subcategory of $\mathscr{A}$, that is, for any object $A\in \mathscr{A}$, there exists an $\mathscr{E}$-deflation $C \to A$ with $C \in \mathscr{C}$ and an  $\mathscr{E}$-inflation  $ A \to C'$ with  $C' \in \mathscr{C}$.
		\end{itemize} 
	\end{definition} 	
	If $n=1$, then $\mathscr{C}=\mathscr{A}$ is the unique $1$-cluster tilting subcategory of $\mathscr{A}$. 
		
		Let $(\mathscr{A},\mathscr{E})$ be an exact category. Recall that a morphism $X\to Y$ in
	$\mathscr{A}$ is \emph{proper} if it has a factorization $X\to Z\to Y$
	such that $X\to Z$ is an $\mathscr{E}$-deflation and $Z\to Y$ an $\mathscr{E}$-inflation.  A sequence 
	\[
	\begin{tikzcd}[row sep=small,column sep=small]
		\cdots\ar{rr}&&X^{k-1}\ar{rr}\drar&&X^k\ar{rr}\drar&&X^{k+1}\ar{rr}&&\cdots\\
		&&&Y^{k-1}\urar&&Y^k\urar
	\end{tikzcd}
	\]
	of proper morphisms is an \emph{$\mathscr{E}$-acyclic complex} if for each $k\in\Z$ the sequence
	$Y^{k-1}\to X^k\to Y^k$ is an $\mathscr{E}$-conflation. An \emph{$\mathscr{E}$-acyclic complex} of the form
	$$\delta\colon X^0 \xrightarrow{} X^1  \xrightarrow{} \cdots \xrightarrow{} X^{n} \xrightarrow{} X^{n+1}$$ is called an \emph{$\mathscr{E}$-acyclic complex of length $n+2$}. 
	
	The following result provides a source of examples of $n$-exact categories and Frobenius $n$-exact categories. This is based on Jasso's construction \cite[Theorems~4.14,~5.16]{Ja}, with Kvamme's correction
	\cite[Theorems~2.6, 8.1]{Kv}.
	By taking the closure under homotopy equivalences,
	the construction works without assuming weak idempotent
	completeness. Here,  for a Frobenius exact category $(\mathscr{A},\mathscr{E})$ and $E\in \mathscr{A}$, an \emph{$n$-th cosyzygy} $\mho^n_{\mathscr{E}}(E)$ of $E$ is defined by an $\mathscr{E}$-acyclic complex
	$$E\to I^1\to \dots \to I^n\to \mho^n_{\mathscr{E}}(E)$$
	where for all $k\in\{1,\dots,n\}$ the object $I^k$ is $\mathscr{E}$-injective.
	\begin{theorem}\label{n-exact from n-CT}  
	Let $(\mathscr A,\mathscr E)$ be an exact category, and let
	$\mathscr C$ be an $n$-cluster tilting subcategory of $\mathscr A$.
	Then $\mathscr C$ carries an $n$-exact structure $\mathscr X$,
	where a complex
	\[
	X^0 \rightarrow X^1 \rightarrow \cdots
	\rightarrow X^n \rightarrow X^{n+1}
	\]
	in $\mathscr C$ is an $\mathscr X$-conflation if and only if
	it is homotopy equivalent to an $\mathscr E$-acyclic complex
	in $\mathscr A$.
		If moreover $(\mathscr{A},\mathscr{E})$ is Frobenius and $\mho^n_{\mathscr{E}}(\mathscr{C})\subset\mathscr{C}$, then  $(\mathscr{C},\mathscr{X})$ is a Frobenius $n$-exact category. Its projective-injectives are precisely the $\mathscr E$-projective-injectives.
	\end{theorem}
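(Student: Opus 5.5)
The plan is to reduce to Jasso's construction \cite[Theorems~4.14, 5.16]{Ja}, as corrected by Kvamme \cite[Theorems~2.6, 8.1]{Kv}. Jasso needs weak idempotent completeness only to get closure under weak isomorphisms. Since we work with Definition~\ref{reformulation of Jasso's axioms}, that closure is replaced by $(\mathrm{EC}')$ and $(\mathrm{EI})$, and we close under homotopy equivalences. So let $\mathscr X_0$ be the class of $\mathscr E$-acyclic complexes of length $n+2$ with all terms in $\mathscr C$, and let $\mathscr X$ be the class of complexes homotopy equivalent to a member of $\mathscr X_0$ via homotopy equivalences fixing both end terms.

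\textbf{Step 1: members of $\mathscr X$ are $n$-exact.} Every member of $\mathscr X_0$ is $n$-exact in $\mathscr C$. Write it as conflations $Y^{k-1}\to X^k\to Y^k$ and use $\Ext^i_{\mathscr A}(\mathscr C,\mathscr C)=0$ for $1\le i<n$. Dimension shifting gives $\Ext^1(C,Y^k)=0$ for $k<n$, hence $\mathscr C(C,X^k)\to\mathscr C(C,Y^k)$ is surjective. Splicing these surjections gives left $n$-exactness; right $n$-exactness is dual. $n$-exactness is invariant under homotopy equivalences fixing the ends \cite[Proposition~4.10]{HLN}, so every member of $\mathscr X$ is $n$-exact. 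Axiom $(\mathrm{EC}')$ then holds by construction, and $(\mathrm E0)$ is trivial.

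\textbf{Step 2: $(\mathrm{EI})$ and the remaining axioms.} $(\mathrm{EI})$ follows from \cite[Proposition~4.10(2)]{HLN}: a morphism of $n$-exact sequences that is an isomorphism on both end terms is a homotopy equivalence. For $(\mathrm{E2'}^{\op})(\mathrm i)$, it suffices to treat $Y^{\mr}\in\mathscr X_0$ and $c\colon X^{n+1}\to Y^{n+1}$. Take the pullback of the deflation $Y^n\to Y^{n+1}$ along $c$ in $\mathscr A$. Then, following Jasso's argument with $\mathscr C$-approximations and the vanishing of $\Ext^i$ for $i<n$, replace the pullback object by a right $\mathscr C$-approximation and iterate. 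This produces an $\mathscr E$-acyclic complex in $\mathscr C$ mapping to $Y^{\mr}$, with identity at degree $0$ after adjusting. This is exactly Kvamme's corrected step. The dual gives $(\mathrm{E2'})(\mathrm i)$, and Proposition~\ref{redundant} supplies the (ii) parts.

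For $(\mathrm E1^{\op})$: a deflation of $\mathscr X$ is an $\mathscr E$-deflation up to the homotopy adjustment, since homotopy equivalences fixing ends preserve the last map up to adding $d\circ h$. Compositions of $\mathscr E$-deflations between objects of $\mathscr C$ are $\mathscr E$-deflations. The $n$-kernel is then resolved inside $\mathscr C$ using approximations, as in \cite[Theorem~4.14]{Ja}. The main obstacle is this step and $(\mathrm{E2'}^{\op})(\mathrm i)$ without weak idempotent completeness. We must check that Kvamme's argument only uses $\mathscr E$-acyclicity up to homotopy equivalence fixing the ends, which is precisely what the enlarged class $\mathscr X$ permits.

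\textbf{Step 3: the Frobenius case.} Assume $\mathscr A$ is Frobenius and $\mho^n_{\mathscr E}(\mathscr C)\subset\mathscr C$. Since $\mathscr C$ is cogenerating and $n$-rigid, $\mathscr E$-projective-injectives lie in $\mathscr C$. For $M\in\mathscr C$, the injective coresolution $M\to I^1\to\cdots\to I^n\to\mho^n M$ lies in $\mathscr X_0$, so there are enough $\mathscr X$-injectives that are $\mathscr E$-injective. Dually, using that $\mho^n$ is an autoequivalence on the stable category (so $\Omega^n\mathscr C\subset\mathscr C$ as well, up to projective summands), we get enough projectives. It remains to identify the classes. $\mathscr E$-projectives are $\mathscr X$-projective, since $\mathscr X$-deflations are $\mathscr E$-deflations up to homotopy. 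Conversely, an $\mathscr X$-projective $P$ has a deflation from an $\mathscr E$-projective that must split, so $P$ is an $\mathscr E$-projective summand. Hence $\mathscr X$-projectives and $\mathscr X$-injectives both coincide with the $\mathscr E$-projective-injectives, and $(\mathscr C,\mathscr X)$ is Frobenius.
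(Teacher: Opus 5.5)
Your overall plan is the paper's: for the first assertion the paper simply invokes \cite[Theorem~8.1]{Kv}, and your Step~3 is essentially its adaptation of \cite[Theorem~5.16(i)]{Ja}. The genuine problem is in Step~2, where you go beyond the citation and verify axioms yourself.

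Your argument for $(\mathrm{EI})$ does not work. \cite[Proposition~4.10(2)]{HLN}, as it is used in the proof of Proposition~\ref{redundant}, gives $sr\sim 1$ and $rs\sim 1$ only when end-fixing morphisms $r,s$ in \emph{both} directions between two $n$-exact sequences are already available. It does not produce a morphism $X^{\mr}\to Y^{\mr}$ from a given $f^{\mr}\colon Y^{\mr}\to X^{\mr}$. Such a morphism must satisfy $d^n_Yg^n=d^n_X$, so $d^n_X$ has to factor through $d^n_Y$. For an arbitrary $n$-exact $X^{\mr}$ in $\mathscr C$, this needs information about $X^{\mr}$ beyond $n$-exactness. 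In effect you need $X^{\mr}\in\mathscr X$, i.e.\ closure of $\mathscr X$ under weak isomorphisms, and that is exactly the delicate point of Jasso's construction which Kvamme's correction addresses. If your claim held for all $n$-exact sequences, $(\mathrm{EI})$ would be automatic for every class of $n$-exact sequences. Compare the proof of $(\mathrm{EI})$ in Theorem~\ref{main thm1}: the paper first shows $\overline{X^{\mr}}\in\mathscr X_{\mathscr F}$ using Proposition~\ref{Strong obscure axiom}, and only then concludes.

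Your $(\mathrm{E1}^{\op})$ sketch has a similar hole. From $d^n_Y=d^n_Xg^n$ with $d^n_Y$ an $\mathscr E$-deflation, you can conclude that $d^n_X$ is an $\mathscr E$-deflation only via the obscure axiom. That axiom needs $d^n_X$ to have a kernel in $\mathscr A$, which is automatic when $\mathscr A$ is weakly idempotent complete but not in general.

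The fix is to take the entire first assertion from \cite[Theorem~8.1]{Kv}, as the paper does. This also gives the class described in the statement. Your class (end-fixing homotopy equivalences to $\mathscr E$-acyclic complexes with all terms in $\mathscr C$) is a priori smaller and would otherwise have to be reconciled with it.

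Smaller remarks:
\begin{itemize}
\item In Step~3, knowing that $\mho^n$ is a stable autoequivalence does not by itself give $\Omega^n\mathscr C\subset\mathscr C$. Use the orthogonality description of $\mathscr C$ instead: $\Ext^i_{\mathscr A}(C',\Omega^nC)\cong\Ext^i_{\mathscr A}(\mho^nC',C)=0$ for $1\le i<n$ and $C,C'\in\mathscr C$.
\item To see that $\mathscr E$-projectives are $\mathscr X$-projective, you do not need $\mathscr X$-deflations to be $\mathscr E$-deflations. The factorization $d^n_Y=d^n_Xg^n$ suffices, or you can use the paper's argument with Hom complexes.
\item In Step~1, $\Ext^1(C,Y^k)$ embeds in $\Ext^{k+1}(C,X^0)$. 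So it vanishes for $k\le n-2$, not for all $k<n$; this is exactly the range needed for surjectivity of $\mathscr C(C,X^k)\to\mathscr C(C,Y^k)$ when $k\le n-1$.
\end{itemize}
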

	\begin{proof}
		The first assertion follows from
		\cite[Theorem~8.1]{Kv}.		
		For the second assertion, we adapt the proof of
		\cite[Theorem~5.16(i)]{Ja}.
		We assume that the exact category $(\mathscr A,\mathscr E)$ is Frobenius and
		$\mho_{\mathscr E}^{n}(\mathscr C)\subset\mathscr C$.
		Let $\mathscr I$ denote the subcategory of
		$\mathscr E$-injectives.
		The definition of $n$-cluster tilting subcategory implies 
				$\mathscr I\subset\mathscr C$.
		
		Let $Z^{\mr}\in\mathscr X$, and choose an
		$\mathscr E$-acyclic complex $X^{\mr}$ homotopy equivalent
		to $Z^{\mr}$. For each $I\in\mathscr I$, applying
		$\mathscr A(-,I)$ yields a homotopy equivalence between		
		$\mathscr A(X^{\mr},I)$ and $\mathscr A(Z^{\mr},I)$.		
		The complex $\mathscr A(X^{\mr},I)$ is exact since $I$ is
		$\mathscr E$-injective.
		Since homotopy equivalent complexes of abelian groups
		have isomorphic cohomology, the complex $\mathscr A(Z^{\mr},I)$ is
		also exact.
		Thus every object of $\mathscr I$ is
		$\mathscr X$-injective. Finally, for $C\in \mathscr{C}$, choose an $\mathscr{E}$-acyclic complex
		\begin{equation} \label{n cosyzygy seq.}
			C\to I^1\to \cdots \to I^n\to \mho^n_{\mathscr{E}}({C}).
		\end{equation}
	Each term belongs to $\mathscr{C}$ since $\mho^n_{\mathscr{E}}(\mathscr{C})\subset\mathscr{C}$. This means that \eqref{n cosyzygy seq.} is an $\mathscr{X}$-conflation, proving that $(\mathscr{C},\mathscr{X})$ has enough	$\mathscr X$-injectives.		
	We claim that  $\mathscr{X}$-injectives are precisely $\mathscr{E}$-injectives. If $C$ is $\mathscr{X}$-injective, then the first morphism in \eqref{n cosyzygy seq.} splits, so $C$ is a direct summand of $I^1$ and thus is $\mathscr{E}$-injective as claimed. 
			
		  Dually, $(\mathscr{C},\mathscr{X})$ has enough $\mathscr X$-projectives (and they are the 	
		  $\mathscr E$-projectives). Since $(\mathscr{A},\mathscr{E})$ is Frobenius, $\mathscr{X}$-projectives and 
		$\mathscr{X}$-injectives coincide. Hence $(\mathscr{C},\mathscr{X})$ is a Frobenius $n$-exact category, whose projective-injective objects are precisely the $\mathscr E$-projective-injective objects.		
	\end{proof}
	
	\section{Ideal quotients of $n$-exact categories}
		\label{sec: Ideal quotients of $n$-exact categories}	
		Let $(\mathscr{C}, \mathscr{X})$ be an $n$-exact category. 
		Let $\mathscr{F}$ be an additive subcategory of $\mathscr{C}$.
		 	For objects 
		$X, Y \in \mathscr{C}$, we write $[\mathscr{F}](X, Y)$ for the subgroup of 
		$\mathscr{C}(X, Y)$ consisting of morphisms factoring through some object 
		in $\mathscr{F}$. The \emph{factor category} $\mathscr{C}/[\mathscr{F}]$ of 
		$\mathscr{C}$ modulo $\mathscr{F}$ is defined as follows: it has the same objects 
		as $\mathscr{C}$;  the morphism set between $X$ and $Y$ is the quotient group 
		$\mathscr{C}(X, Y)/[\mathscr{F}](X, Y)$; the composition is induced from 
		that in $\mathscr{C}$. Note that 
		$\mathscr{C}/[\mathscr{F}]$ inherits the structure of an additive category. 	
			Denote by $$\pi_\mathscr{F}\colon \mathscr{C}\rightarrow\mathscr{C}/[\mathscr{F}]$$ the canonical functor.  For any morphism $f\colon X \to Y $ in $\mathscr{C}$, we denote by $\overline{f}$ the image of $f$ under $\pi_\mathscr{F}$. Denote by $\mathscr{P}$ (resp. $\mathscr{I}$)
			the  subcategory consisting of $\mathscr{X}$-projectives (resp. $\mathscr{X}$-injectives) in $\mathscr{C}$. 
			An additive subcategory  $\mathscr{F}\subset \mathscr{C}$  is called \emph{removable (for $\mathscr{X}$)} if $\mathscr{F}\subset \mathscr{P}\cap \mathscr{I}$ and the canonical functor $\pi_\mathscr{F}$ sends $\mathscr{X}$-conflations in $\mathscr{C}$  to $n$-exact sequences in $\mathscr{C}/[\mathscr{F}]$; compare \cite{CDR}. 
						
			If moreover $(\mathscr{C}, \mathscr{X})$ is a Frobenius $n$-exact category, the case $\mathscr{F}=\mathscr{P}=\mathscr{I}$ is of particular interest, since the associated quotient category $\mathscr{C}/[\mathscr{P}]$, namely the  \emph{stable category} of $\mathscr{C}$, denoted by $\underline{\mathscr{C}}$,  is equipped with a natural $(n+2)$-angulated structure.
			 In this case,  the canonical functor $\pi_\mathscr{P}\colon \mathscr{C}\rightarrow\underline{\mathscr{C}}$  sends $\mathscr{X}$-conflations to standard $(n+2)$-angles. For details, we refer to \cite[Section 5.2]{Ja}.
			
			Assume $\mathscr{F}\subset \mathscr{P}\cap \mathscr{I}$. We are interested in the following question: when the quotient category $\mathscr{C}/[\mathscr{F}]$ is still an  $n$-exact category, and when this structure is Frobenius.  In this section,	we establish the ideal quotient theorem for this question. We also give a sufficient condition for $\mathscr{F}$ to be a removable subcategory of $\mathscr{C}$, and show that
			the ideal quotient construction is compatible with cluster tilting.

			\subsection{Ideal quotients of  $n$-exact categories}

				Let $\mathscr{X}_\mathscr{F}$ be the class of sequences $\overline{{Y}^{\mr}}=(\overline{d^0_Y},\dots,\overline{d^{n}_Y})$ in $\mathscr{C}/[\mathscr{F}]$ admitting a homotopy equivalence
			\begin{equation}\label{new exact structure}
				\begin{tikzcd}
					\overline{Y^{\mr}}\dar{\overline{y^{\mr}}}	& A \dar[equal]{} \rar{\overline{d_Y^0}} & Y^1 \rar{\overline{d_Y^1}}\dar{\overline{y^1}}& \cdots\rar{\overline{d^{n-1}_Y}} & Y^n\rar{\overline{d_Y^n}} \dar{\overline{y^n}} & C \dar[equal]{}\\
					\overline{{Z}^{\mr}}	& A \rar{\overline{d^0_Z}} & Z^1\rar{\overline{d^1_Z}} & \cdots\rar{\overline{d^{n-1}_Z}}  & Z^n \rar{\overline{d^n_Z}} & C
				\end{tikzcd}
			\end{equation}
			 where $\overline{{Z}^{\mr}}=(\overline{d^0_Z},\dots,\overline{d^{n}_Z})$ is the image of some ${{Z}^{\mr}}=(d^0_Z,\dots, d^n_Z)\in\mathscr{X}$, and  $\overline{y^0}=1_{A}$ and  $\overline{y^{n+1}}=1_{C}$. Note that by \cite[Claim 2.8]{HLN}, there exists a homotopy inverse $\overline{z^{\mr}}\colon \overline{{Z}^{\mr}}\to \overline{Y^{\mr}}$ of $\overline{y^{\mr}}$ such that $\overline{z^0}=1_{A}$ and $\overline{z^{n+1}}=1_{C}$.

			\begin{proposition} \label{closed under homotopy equivalences}
			Assume that $\mathscr{F}$ is a removable subcategory of $\mathscr{C}$. Then  any sequence in $\mathscr{X}_\mathscr{F}$ is $n$-exact  in $\mathscr{C}/[\mathscr{F}]$, and the pair $(\mathscr{C}/[\mathscr{F}],\mathscr{X}_\mathscr{F})$ satisfies $(\rm EC')$, $(\rm {E2'}^{})$ and $(\rm {E2'}^{op})$. Moreover,  			
			$\mathscr{X}_\mathscr{F}$ is closed under homotopy equivalences in ${\bf{C}}_{\mathscr{C}/[\mathscr{F}]}^{n+2}$.
		\end{proposition}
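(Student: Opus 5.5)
We prove the four assertions in order: $n$-exactness, closure under homotopy equivalences (which gives $(\mathrm{EC}')$), $(\mathrm{E2'}^{\op})(\mathrm i)$ with its dual, and finally the (ii) parts via Proposition~\ref{redundant}.

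\emph{$n$-exactness.} Let $\overline{Y^{\mr}}\in\mathscr X_{\mathscr F}$, with homotopy equivalence $\overline{y^{\mr}}\colon\overline{Y^{\mr}}\to\overline{Z^{\mr}}$ as in \eqref{new exact structure}. Since $\mathscr F$ is removable, $\overline{Z^{\mr}}$ is $n$-exact in $\mathscr C/[\mathscr F]$. We then use the standard fact that $n$-exactness passes along homotopy equivalences. For any object $W$, the complexes $(\mathscr C/[\mathscr F])(W,\overline{Y^{\mr}})$ and $(\mathscr C/[\mathscr F])(W,\overline{Z^{\mr}})$ are homotopy equivalent complexes of abelian groups, via maps that are identities in degrees $0$ and $n+1$. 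So they have the same cohomology. The same holds for $(\mathscr C/[\mathscr F])(\overline{Y^{\mr}},W)$ and $(\mathscr C/[\mathscr F])(\overline{Z^{\mr}},W)$. Exactness at the end terms must be handled separately, because homotopies can move cohomology there. For example, to see that $\overline{d_Y^0}$ is monic, write $\overline{d_Y^0}=\overline{z^1}\,\overline{d_Z^0}$ (up to the sign conventions), and note that $\overline{d_Z^0}$ is monic. The argument at degree $n+1$ is dual. This mirrors the argument in \cite[Proposition~4.10]{HLN}.

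\emph{Closure under homotopy equivalences and $(\mathrm{EC}')$.} Suppose $\overline{W^{\mr}}\to\overline{Y^{\mr}}$ is a homotopy equivalence fixing both end terms. Composing it with $\overline{y^{\mr}}$ gives a homotopy equivalence to $\overline{Z^{\mr}}$ fixing both ends, so $\overline{W^{\mr}}\in\mathscr X_{\mathscr F}$. The converse direction uses the homotopy inverse $\overline{z^{\mr}}$ with identity end terms from \cite[Claim~2.8]{HLN}. This gives $(\mathrm{EC}')$. For closure under arbitrary homotopy equivalences in ${\bf C}^{n+2}_{\mathscr C/[\mathscr F]}$, I expect to use $(\mathrm{EI})$-type arguments: a homotopy equivalence between $n$-exact sequences restricts to isomorphisms on the end terms. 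Indeed, the end components are isomorphisms, since a homotopy between $n$-exact sequences is an isomorphism at the ends after using the mono/epi cancellation. One can then change the end terms by these isomorphisms and reduce to the case fixing the ends, which is absorbed into the closure of $\mathscr X$ under isomorphisms of complexes.

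\emph{Axiom $(\mathrm{E2'}^{\op})(\mathrm i)$.} Given $\overline{Y^{\mr}}\in\mathscr X_{\mathscr F}$ and $\overline{f}\colon X^{n+1}\to Y^{n+1}$, pick a representative $f$ in $\mathscr C$. First transport along $\overline{y^{\mr}}$ to $\overline{Z^{\mr}}$ with $Z^{\mr}\in\mathscr X$. Apply $(\mathrm{E2'}^{\op})(\mathrm i)$ for $\mathscr X$ to $Z^{\mr}$ along $f$, producing $X^{\mr}\in\mathscr X$ and a morphism $X^{\mr}\to Z^{\mr}$. Project this morphism to $\mathscr C/[\mathscr F]$ and compose with $\overline{z^{\mr}}$ to obtain a morphism $\overline{X^{\mr}}\to\overline{Y^{\mr}}$ that is the identity in degree $0$ and equals $\overline f$ in degree $n+1$. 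The dual (i) is symmetric.

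\emph{The (ii) axioms.} Everything is now in place for Proposition~\ref{redundant}, applied to the class $\mathscr X_{\mathscr F}$ of $n$-exact sequences in $\mathscr C/[\mathscr F]$, which supplies both (ii) axioms. The main obstacle I anticipate is not $(\mathrm{E2'})$ but the full homotopy-closure claim, specifically controlling the end terms. If that reduction fails, I would restrict attention to equivalences fixing the end terms, which suffices for all axioms used.
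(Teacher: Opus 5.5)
Your treatment of $n$-exactness, of $(\mathrm{EC}')$, of $(\mathrm{E2'}^{\op})(\mathrm i)$ and its dual, and the appeal to Proposition~\ref{redundant} for the (ii) parts agrees with the paper. There is one small slip. From $\overline{d_Y^0}=\overline{z^1}\,\overline{d_Z^0}$ and monicity of $\overline{d_Z^0}$ nothing follows about $\overline{d_Y^0}$; you should use $\overline{d_Z^0}=\overline{y^1}\,\overline{d_Y^0}$ instead. In fact no separate argument at the end terms is needed. The Hom complexes are concentrated in degrees $0,\dots,n+1$, so a homotopy equivalence preserves all of their cohomology, including the kernel in degree $0$.

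The genuine gap is the final assertion, closure under \emph{arbitrary} homotopy equivalences in ${\bf C}^{n+2}_{\mathscr C/[\mathscr F]}$.

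\begin{itemize}
\item The fact you invoke is false: a homotopy equivalence between $n$-exact sequences need not have invertible end components. For example, $A\xrightarrow{1_A}A\to 0\to\cdots\to 0$ is a contractible $n$-exact sequence, and it is homotopy equivalent to the zero complex via zero maps.
\item So the reduction to $(\mathrm{EC}')$ fails. Your fallback, restricting to equivalences fixing both ends, only recovers $(\mathrm{EC}')$ itself.
\item The full statement is really used later. In the proof of $(\mathrm{EI})$ in Theorem~\ref{main thm1}, the homotopy equivalence $\overline{X^{\mr}}\to\overline{Z^{\mr}}$ from \cite[Proposition~2.7]{Ja} is the identity only in degrees $n$ and $n+1$. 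Its degree-$0$ component $A\to Z^0$ need not be invertible.
\end{itemize}

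The missing idea is that the end terms must be changed using the pushout and pullback axioms. So this closure has to be proved \emph{after} $(\mathrm{E2'})$ and $(\mathrm{E2'}^{\op})$, which is why the paper proves it last, by quoting \cite[Lemma~4.22(4)]{HLN}.

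One way to carry this out is as follows. Let $f\colon X^{\mr}\to Y^{\mr}$ be a homotopy equivalence with $X^{\mr}\in\mathscr X_{\mathscr F}$ and homotopy inverse $g$.
\begin{itemize}
\item Push $X^{\mr}$ out along $f^0$ to get $U^{\mr}$. Then pull $U^{\mr}$ back along $g^{n+1}$ to get $V^{\mr}\in\mathscr X_{\mathscr F}$ with the same end terms as $Y^{\mr}$.
\item The $n$-exact mapping cocone and cone give weak universal properties. After correcting $f^0g^0$ and $f^{n+1}g^{n+1}$ by the homotopy $fg\sim 1$, these yield $\psi\colon Y^{\mr}\to V^{\mr}$ with $\psi^0=1$ and $\psi^{n+1}=1$, and $\chi\colon V^{\mr}\to Y^{\mr}$ with $\chi^{n+1}=1$.
\item Any chain map into a left $n$-exact complex whose last component vanishes is null-homotopic. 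Hence $\psi$ and $\chi$ are mutually homotopy inverse.
\item Since $\psi$ fixes both end terms, $(\mathrm{EC}')$ gives $Y^{\mr}\in\mathscr X_{\mathscr F}$.
\end{itemize}
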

		\begin{proof} Since $\mathscr{F}$ is a removable subcategory of $\mathscr{C}$, the image of an $\mathscr{X}$-conflation under $\pi_\mathscr{F}$ is an $n$-exact sequence in $\mathscr{C}/[\mathscr{F}]$. It further follows from \cite[Proposition 2.5]{Ja}  that $n$-exactness  is preserved under homotopy equivalences. Thus
			any sequence in $\mathscr{X}_\mathscr{F}$ is an $n$-exact sequence in $\mathscr{C}/[\mathscr{F}]$.
			Moreover, the definition of $\mathscr X_{\mathscr F}$
			and the transitivity of homotopy equivalences fixing
			both end terms imply $(\mathrm{EC'})$.	

			To show $(\rm {E2'}^{})$ and $(\rm {E2'}^{op})$,
			by Proposition~\ref{redundant}, it suffices to check
			$(\mathrm{E2'})(\rm i)$ and
			$(\mathrm{E2'}^{\mathrm{op}})(\rm  i)$.
			We prove the latter since the former follows dually.
			 Let $\overline{Y^{\mr}}\in \mathscr{X}_\mathscr{F}$ be of the form (\ref{new exact structure}). Let $\theta\colon B\to C$ be a morphism in
			 $\mathscr C/[\mathscr F]$. Since $\pi_{\mathscr F}$ is full, we may choose a lift
			 $f^{n+1}\colon B\to C$ in $\mathscr C$ such that
			 $\overline{f^{n+1}}=\theta$.
			Then we apply $(\rm {E2'}^{op})(i)$ for $(\mathscr{C}, \mathscr{X})$ to get the following commutative diagram
			in $\mathscr{C}$ such that ${X}^{\mr}\in\mathscr{X}$.
			\[
			\begin{tikzcd}[ampersand replacement=\&]
				{X^{\mr}}\dar{{f^{\mr}}}	\&	A \dar[equal] \rar[dotted]{} 
				\& X^1 \rar[dotted]{} \dar[dotted]{} 
				\& \cdots\rar[dotted]{} 
				\& X^n \rar[dotted]{} \dar[dotted] 
				\& B \dar{f^{n+1}} \\
				{Z^{\mr}}	\&	A \rar{d_Z^0} 
				\& Z^1 \rar{d_Z^1} 
				\& \cdots\rar{d_Z^{n-1}}
				\& Z^n  \rar{d_Z^n} 
				\& C
			\end{tikzcd}
			\]
			 Moreover, we get $\overline{X^{\mr}}\in\mathscr{X}_\mathscr{F}$. By \cite[Claim 2.8]{HLN}, we can choose a homotopy inverse $\overline{h^{\mr}}$ of $\overline{y^{\mr}}$   such that
			$\overline{h^0}=1_{A}$ and $\overline{h^{n+1}}=1_{C}$.  Then we obtain the desired morphism $\overline{h^{\mr}}\circ \overline{f^{\mr}}\colon\overline{X^{\mr}}\to \overline{Y^{\mr}}$.  
			Thus $(\rm {E2'}^{op})(i)$ holds for $(\mathscr{C}/[\mathscr{F}], \mathscr{X}_\mathscr{F})$.

			The last assertion follows directly from \cite[Lemma 4.22(4)]{HLN}. 
						\end{proof}	
		
			\begin{remark} \label{ramark:two exact structures}
				In the preceding proposition, for $n=1$, let $\mathscr{X}'_{\mathscr{F}}$ denote the class of composable
				pairs in $\mathscr{C}/[\mathscr{F}]$ that are isomorphic to
				images of $\mathscr{X}$-conflations. In this case,
				$\mathscr{X}'_{\mathscr{F}}=\mathscr{X}_{\mathscr{F}}$. Indeed, 
				since every isomorphism of complexes is a homotopy equivalence,
				$\mathscr{X}'_{\mathscr{F}}\subset\mathscr{X}_{\mathscr{F}}$ follows from Proposition~\ref{closed under homotopy equivalences}. 
				
				For the reverse inclusion, we show that  a homotopy equivalence fixing both end terms between
				 kernel-cokernel pairs is an isomorphism. 
				 	Let
				 	\[
				 	X^{\mr}:\ A\xrightarrow{a}B\xrightarrow{b}C
				 	\quad \text{and} \quad
				 	Y^{\mr}:\ A\xrightarrow{a'}B'\xrightarrow{b'}C
				 	\]
				 	be two kernel-cokernel pairs, and let
				 	$f^{\mr}=(1_A,f^1,1_C)\colon X^{\mr}\to Y^{\mr}$ be a homotopy equivalence.
				 	By \cite[Claim 2.8]{HLN}, we can choose a homotopy inverse
				 	 $g^{\mr}=(1_A,g^1,1_C)\colon Y^{\mr}\to X^{\mr}$ of $f^{\mr}$.				 	
				 Put $t=g^1f^1-1_B$. Then we have
				 $bt=0$ and $ta=0$.
				 Since $a=\ker b$, we have $t=ah$ for some 
				 $h\colon B\to A$. Since $a$ is a monomorphism, $aha=ta=0$ implies $ha=0$.
				 Since $b=\operatorname{coker}a$, we have $h=ub$ for
				 some $u:C\to A$. Hence
				 $t=aub$ and $t^2=au(ba)ub=0.$
				 Thus $g^1f^1=1_B+t$ is invertible, whose inverse is $1_B-t$.
				 Similarly, $f^1g^1$ is invertible.
				 Hence $f^1$ is an isomorphism.
				 It follows that $f^{\mr}$ is an isomorphism of complexes. This proves $\mathscr{X}'_{\mathscr{F}}=\mathscr{X}_{\mathscr{F}}$.
			\end{remark}

		\begin{lemma}\label{a lemma 2}
			Assume that $\mathscr{F}$ is a removable subcategory of $\mathscr{C}$. If $\mathscr{C}$ is weakly idempotent complete or $n\ge 2$, then the following assertions hold for each morphism $\delta\colon X\to Y$ in $\mathscr{C}$.
			\begin{itemize}						
				\item[(a)] $\overline \delta$ is an
				$\mathscr X_{\mathscr F}$-inflation if and only if
				$\begin{bmatrix}
					\delta & a
				\end{bmatrix}^\top\colon X\to Y\oplus F$ is an
				$\mathscr X$-inflation for some morphism $a\colon X\to F$ with $F\in\mathscr F$.
				\item[(b)] $\overline \delta$ is an
				$\mathscr X_{\mathscr F}$-deflation if and only if
				$\begin{bmatrix}
					\delta & b
				\end{bmatrix}\colon X\oplus F\to Y$ is an
				$\mathscr X$-deflation for some morphism $b\colon F\to Y$ with $F\in\mathscr F$.
			\end{itemize}
		\end{lemma}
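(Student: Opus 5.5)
We prove (b); (a) is dual. The plan is to pass between $\mathscr X$-conflations in $\mathscr C$ and $\mathscr X_{\mathscr F}$-conflations in $\mathscr C/[\mathscr F]$, using two facts. First, adding a summand from $\mathscr F$ is invisible in the quotient. Second, the Strong obscure axiom (Proposition~\ref{Strong obscure axiom}) is available because $\mathscr C$ is weakly idempotent complete or $n\ge 2$.

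\emph{($\Leftarrow$).} Suppose $\begin{bmatrix}\delta & b\end{bmatrix}\colon X\oplus F\to Y$ is an $\mathscr X$-deflation, and choose an $\mathscr X$-conflation $Z^{\mr}$ with last differential $d_Z^n=\begin{bmatrix}\delta & b\end{bmatrix}$. Its image $\overline{Z^{\mr}}$ lies in $\mathscr X_{\mathscr F}$, with last term $X\oplus F\cong X$ in $\mathscr C/[\mathscr F]$, since $F\cong 0$ there. Under the isomorphism $\overline{\begin{bmatrix}1&0\end{bmatrix}^\top}\colon X\to X\oplus F$, the morphism $\overline{d_Z^n}$ corresponds to $\overline\delta$. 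Replacing the $n$-th term by $X$ gives an isomorphic complex with the same end terms, hence a homotopy equivalence fixing both ends. Its last differential is $\overline\delta$. By Proposition~\ref{closed under homotopy equivalences}, $\mathscr X_{\mathscr F}$ is closed under homotopy equivalences, so $\overline\delta$ is an $\mathscr X_{\mathscr F}$-deflation.

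\emph{($\Rightarrow$).} Suppose $\overline\delta$ is the last differential of some $\overline{Y^{\mr}}\in\mathscr X_{\mathscr F}$. By definition there is a homotopy equivalence $\overline{y^{\mr}}\colon\overline{Y^{\mr}}\to\overline{Z^{\mr}}$ fixing both ends, where $Z^{\mr}\in\mathscr X$ has $Z^{n+1}=Y$. Compatibility in the last square gives $\overline{d_Z^n}\,\overline{y^n}=\overline\delta$. Hence $\delta-d_Z^ny^n$ factors through some $F_1\in\mathscr F$, say $\delta=d_Z^ny^n+b_1c_1$ with $c_1\colon X\to F_1$ and $b_1\colon F_1\to Y$.

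Now consider
\[
\begin{bmatrix}d_Z^n & b_1\end{bmatrix}\colon Z^n\oplus F_1\to Y.
\]
This is an $\mathscr X$-deflation: it is the composite of $d_Z^n\oplus 1_{F_1}$, a deflation because $\mathscr X$ is closed under direct sums with contractible complexes, with the split epimorphism $\begin{bmatrix}1&b_1\end{bmatrix}\colon Y\oplus F_1\to Y$. One then uses $(\mathrm E1^{\op})$, as in the proof of Proposition~\ref{Strong obscure axiom}. More simply, $F_1$ is $\mathscr X$-projective, so $b_1$ factors through $d_Z^n$, and $\begin{bmatrix}d_Z^n&b_1\end{bmatrix}=d_Z^n\begin{bmatrix}1&e\end{bmatrix}$, which is a deflation composed with a split epimorphism. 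The goal is to exhibit $\begin{bmatrix}d_Z^n&b_1\end{bmatrix}$ as a composite $g\circ h$ with $g=\begin{bmatrix}\delta&b\end{bmatrix}$. Then Proposition~\ref{Strong obscure axiom}(b) shows that $g$ is an $\mathscr X$-deflation.

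\emph{Main obstacle: producing this factorization.} The factorization needs a morphism $Z^n\to X$. We take the homotopy inverse $\overline{z^{\mr}}$ with $\overline{z^{n+1}}=1$, so that $\overline\delta\,\overline{z^n}=\overline{d_Z^n}$. Then $d_Z^n=\delta z^n+b_2c_2$ with $b_2\colon F_2\to Y$ and $c_2\colon Z^n\to F_2$, where $F_2\in\mathscr F$. Set $F=F_1\oplus F_2$ and $b=\begin{bmatrix}b_1&b_2\end{bmatrix}$. Then
\[
\begin{bmatrix}d_Z^n & b_1\end{bmatrix}=\begin{bmatrix}\delta & b\end{bmatrix}\circ h,
\qquad
h=\begin{bmatrix} z^n & 0\\ 0 & 1_{F_1}\\ c_2 & 0\end{bmatrix}\colon Z^n\oplus F_1\to X\oplus F_1\oplus F_2,
\]
which is checked directly. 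By Proposition~\ref{Strong obscure axiom}(b), $\begin{bmatrix}\delta&b\end{bmatrix}$ is an $\mathscr X$-deflation, as required.
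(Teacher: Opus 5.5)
Your proof is correct and is essentially the paper's argument. The key step in both is to use the homotopy inverse $\overline{z^{\mr}}$ with $\overline{z^{n+1}}=1$ to write $d_Z^n=\delta z^n+b_2c_2$, so that the $\mathscr X$-deflation $d_Z^n$ factors through $\begin{bmatrix}\delta & b_2\end{bmatrix}$, and then to apply Proposition~\ref{Strong obscure axiom}(b); the converse in both comes from completing to a conflation and applying $\pi_{\mathscr F}$. Your detour through $y^n$, $F_1$ and $\begin{bmatrix}d_Z^n & b_1\end{bmatrix}$ is harmless but unnecessary, because $d_Z^n=\begin{bmatrix}\delta & b_2\end{bmatrix}\begin{bmatrix}z^n\\ c_2\end{bmatrix}$ already does the job.
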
	 		 
		\begin{proof}
			Next we only show (b), since (a) can be proved similarly.
		
		(b) Let ${\delta}:{X}\to {Y}$ be a morphism in $\mathscr{C}$ such that $\overline{\delta}$ is an $\mathscr{X}_\mathscr{F}$-deflation.
		Then there exists a  morphism ${\alpha}\colon X'\to X$ in $\mathscr{C}$ such that 	$\overline{\delta'}=   \overline{\delta}\circ\overline{\alpha}$, where $\delta'\colon X'\to Y$ is an $\mathscr{X}$-deflation. Then
		$\delta'-\delta\alpha$ has a factorization $X'\xrightarrow{a} F \xrightarrow{b} Y$ with $F\in \mathscr{F}$. 
		This shows that $\delta'$ has a factorization
		$$X'\xrightarrow{\begin{bmatrix}
				\alpha\\
				a
		\end{bmatrix}}X\oplus F \xrightarrow{\begin{bmatrix}
				\delta & b
		\end{bmatrix}} Y.$$
		Using Proposition \ref{Strong obscure axiom} for $(\mathscr{C}, \mathscr{X})$, we have that $\theta:=\begin{bmatrix}
			\delta & b
		\end{bmatrix}\colon X\oplus F\to Y$ is an $\mathscr{X}$-deflation.  Conversely, completing $\begin{bmatrix}
			\delta & b
		\end{bmatrix}$ to an  $\mathscr{X}$-conflation and then applying $\pi_\mathscr{F}$ yields an $\mathscr X_{\mathscr F}$-conflation whose last morphism is $\bar\delta$.
				\end{proof}

		For $A,C\in\mathscr C$, let $\nExt^1_{\mathscr X}(C,A)$
		denote the collection of classes $[X^{\mr}]$ of $n$-exact sequences
		\[
		X^{\mr}\colon\quad
		A\rightarrow X^1\rightarrow\cdots
		\rightarrow X^n\rightarrow C
		\]
		in $\mathscr X$, modulo homotopy equivalences fixing both end
		terms. By $(\mathrm{EI})$, every morphism fixing both end terms
		is a homotopy equivalence, so this agrees with Yoneda equivalence.
		We always assume that
		$\nExt^1_{\mathscr X}(C,A)$ is a set for all $A,C\in\mathscr C$. By \cite[Definition~4.28 and Proposition~4.32]{HLN}, 
		the Baer sum then endows $\nExt^1_{\mathscr X}(C,A)$ with an
		abelian group structure.
			
			 The following result is a higher analogue of \cite[Theorem~3.6]{DI}.
		 	\begin{theorem} \label{main thm1} Let $(\mathscr C,\mathscr X)$ be an $n$-exact category and let
		 		$\mathscr F\subset\mathscr P\cap\mathscr I$ be an additive subcategory.
		 	Then the following conditions are equivalent.
		 	\begin{itemize}
		 		\item [(a)] $\mathscr{F}$ is a removable subcategory of $\mathscr{C}$.
		 		\item [(b)]  $(\mathscr{C}/[\mathscr{F}],{\mathscr{X}_\mathscr{F}})$ is an $n$-exact  category.			
		 	\end{itemize}
		 	In this case,  the induced map $$\nExt^1_{\mathscr X}(C,A)\to \nExt^1_{\mathscr{X}_\mathscr{F}}(C,A)$$ is an isomorphism of abelian
		 	groups for any $A,C\in \mathscr{C}$.
		 \end{theorem}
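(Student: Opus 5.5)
The implication (b)$\Rightarrow$(a) is immediate. If $(\mathscr C/[\mathscr F],\mathscr X_{\mathscr F})$ is $n$-exact, then every member of $\mathscr X_{\mathscr F}$ is an $n$-exact sequence by definition. The image of an $\mathscr X$-conflation lies in $\mathscr X_{\mathscr F}$ (take the identity homotopy equivalence). Since $\mathscr F\subset\mathscr P\cap\mathscr I$ is assumed, $\mathscr F$ is removable.

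For (a)$\Rightarrow$(b), I verify the axioms of Definition~\ref{reformulation of Jasso's axioms}. Proposition~\ref{closed under homotopy equivalences} already shows that $\mathscr X_{\mathscr F}$ consists of $n$-exact sequences and satisfies $(\mathrm{EC'})$, $(\mathrm{E2'})$ and $(\mathrm{E2'}^{\op})$. Axiom $(\mathrm{E0})$ holds because the zero complex is the image of the zero conflation.

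For $(\mathrm{E1}^{\op})$, let $\bar\delta\colon X\to Y$ and $\bar\gamma\colon Y\to Z$ be $\mathscr X_{\mathscr F}$-deflations. Each is the last morphism of some $\overline{W^{\mr}}$ homotopy equivalent to the image of an $\mathscr X$-conflation, with both end terms fixed. Hence, up to modifying by a morphism factoring through $\mathscr F$, it is the image of an $\mathscr X$-deflation precomposed with a morphism; this is the first half of the proof of Lemma~\ref{a lemma 2}. I would avoid the weak idempotent completeness hypothesis of that lemma by working directly. Write $\bar\delta\bar\alpha=\overline{\delta'}$ with $\delta'\colon X'\to Y$ an $\mathscr X$-deflation, and $\bar\gamma\bar\beta=\overline{\gamma'}$ with $\gamma'\colon Y'\to Z$ an $\mathscr X$-deflation. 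Replacing $\gamma'$ by the $\mathscr X$-deflation $[\gamma'\ \ b]\colon Y'\oplus F\to Z$ with $F\in\mathscr F$ does not change the image. Since $\mathscr F\subset\mathscr P$, I can lift along $\delta'$ and compose deflations using $(\mathrm{E1}^{\op})$ and $(\mathrm{E2'}^{\op})(\mathrm i)$ in $\mathscr C$. This should produce an $\mathscr X$-deflation whose image equals $\bar\gamma\bar\delta$ composed with a morphism. Finally I need a way to recognize deflations in the quotient: I would show that if $\overline{\epsilon}=\bar\gamma\bar\delta\circ\bar\mu$ with $\epsilon$ an $\mathscr X$-deflation, then $\bar\gamma\bar\delta$ is an $\mathscr X_{\mathscr F}$-deflation. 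For this I would use the quotient analogue of the obscure axiom, namely that $\bar\gamma\bar\delta$ has an $n$-kernel in $\mathscr C/[\mathscr F]$ obtained as in the proof of Proposition~\ref{Strong obscure axiom}, and then pass through the $n$-pullback of $(\mathrm{E2'}^{\op})(\mathrm i)$. $(\mathrm{E1})$ is dual.

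For $(\mathrm{EI})$, take $\overline{Y^{\mr}}\in\mathscr X_{\mathscr F}$, an $n$-exact sequence $X^{\mr}$ in the quotient, and a morphism $f^{\mr}$ between them that is invertible at both ends. By $(\mathrm{EC'})$ I may assume $\overline{Y^{\mr}}$ is the image of an $\mathscr X$-conflation. The comparison argument of \cite[Proposition~4.10]{HLN} then applies in $\mathscr C/[\mathscr F]$: the $\mathscr X_{\mathscr F}$-sequence is $n$-exact, so I can construct a morphism back using the weak (co)kernel properties of $X^{\mr}$ and the chosen conflation. Both composites fix the end terms up to isomorphism, and morphisms between $n$-exact sequences fixing both ends with identity are homotopic to the identity once they lift on one side. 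I expect this step, together with the deflation-recognition argument in $(\mathrm{E1}^{\op})$, to be the main obstacle, since $X^{\mr}$ is only $n$-exact in the quotient and need not lift to $\mathscr C$.

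For the isomorphism $\nExt^1_{\mathscr X}(C,A)\to\nExt^1_{\mathscr X_{\mathscr F}}(C,A)$:

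\textbf{Well-defined.} $\pi_{\mathscr F}$ preserves homotopy equivalences fixing end terms.

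\textbf{Surjective.} This holds by the definition of $\mathscr X_{\mathscr F}$.

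\textbf{Additive.} The Baer sum is built from direct sums, $(\mathrm{E2'})$ pullbacks and pushouts along diagonals and codiagonals. $\pi_{\mathscr F}$ commutes with these, since the quotient versions may be taken as images.

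\textbf{Injective.} Suppose $[X^{\mr}]$ maps to zero. Then $\overline{d^n_X}$ splits in the quotient: $\overline{d^n_X}\,\bar s=1_C$, so $1_C-d^n_Xs$ factors as $C\to F\to C$. Since $F\in\mathscr P$, the map $F\to C$ lifts through $d^n_X$. Hence $d^n_X$ is a split epimorphism in $\mathscr C$, and $X^{\mr}$ is contractible by \cite[Proposition~2.6]{Ja}, i.e. $[X^{\mr}]=0$.
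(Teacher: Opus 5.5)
Your overall plan matches the paper's. You verify the axioms of Definition~\ref{reformulation of Jasso's axioms} and take $(\mathrm{EC'})$, $(\mathrm{E2'})$, $(\mathrm{E2'}^{\op})$ from Proposition~\ref{closed under homotopy equivalences}. Your arguments for (b)$\Rightarrow$(a) and for the $\nExt^1$ isomorphism are essentially the paper's. The genuine gap is $(\mathrm{EI})$.

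After your reduction you have three things: $\overline{W^{\mr}}$, the image of an $\mathscr X$-conflation; an arbitrary $n$-exact sequence $\overline{X^{\mr}}$ in $\mathscr C/[\mathscr F]$; and a morphism $\overline{h^{\mr}}\colon\overline{W^{\mr}}\to\overline{X^{\mr}}$ that is the identity on $A$ and $C$. You cannot build a morphism back from weak (co)kernel properties. At the right end you would need $g^n$ with $\overline{d_W^n}\,g^n=\overline{d_X^n}$, i.e.\ a lift of $\overline{d_X^n}$ through $\overline{d_W^n}$. At the left end you would need an extension of $\overline{d_W^0}$ along $\overline{d_X^0}$. Weak kernels of $\overline{W^{\mr}}$ and weak cokernels of $\overline{X^{\mr}}$ provide neither. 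This is exactly why $(\mathrm{EI})$ is an axiom; the homotopy argument of \cite[Proposition~4.10]{HLN} only handles the composites once a morphism back exists.

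The missing idea is to prove that $\overline{d_X^n}$ is an $\mathscr X_{\mathscr F}$-deflation, by an obscure-axiom argument carried out in $\mathscr C$. From $\overline{d_W^n}=\overline{d_X^n}\circ\overline{h^n}$ you get $d_W^n=d_X^nh^n+ba$ with $W^n\xrightarrow{a}F\xrightarrow{b}C$ and $F\in\mathscr F$, that is, $d_W^n=\begin{bmatrix}d_X^n&b\end{bmatrix}\begin{bmatrix}h^n\\a\end{bmatrix}$. For $n\ge2$, Proposition~\ref{Strong obscure axiom} (which needs no weak idempotent completeness) makes $\begin{bmatrix}d_X^n&b\end{bmatrix}$ an $\mathscr X$-deflation. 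Complete it to $Z^{\mr}\in\mathscr X$. Since $X^n\oplus F\simeq X^n$ in $\mathscr C/[\mathscr F]$, $\overline{Z^{\mr}}$ and $\overline{X^{\mr}}$ provide $n$-kernels of the same morphism. They are therefore homotopy equivalent by \cite[Proposition~2.7]{Ja}. Hence $\overline{X^{\mr}}\in\mathscr X_{\mathscr F}$ by Proposition~\ref{closed under homotopy equivalences}, and \cite[Lemma~4.22(1)]{HLN} gives $(\mathrm{EI})$.

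The same device repairs the deflation-recognition step in your $(\mathrm{E1}^{\op})$. The statement you want is true, but the route you sketch is circular. The $n$-kernel in the proof of Proposition~\ref{Strong obscure axiom} comes from applying $(\mathrm{E1}^{\op})$ to the structure in question, here $\mathscr X_{\mathscr F}$. Also, \cite[Proposition~4.11]{Ja} is a statement about $n$-exact categories, so invoking it for $\mathscr X_{\mathscr F}$ presupposes what you are proving.

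No quotient obscure axiom is needed. Suppose $\overline{\epsilon}=\overline{g}\circ\overline{\mu}$ with $\epsilon$ an $\mathscr X$-deflation. Write $\epsilon-g\mu=vu$ with $E\xrightarrow{u}F\xrightarrow{v}Z$ and $F\in\mathscr F$, so $\epsilon=\begin{bmatrix}g&v\end{bmatrix}\begin{bmatrix}\mu\\u\end{bmatrix}$. Proposition~\ref{Strong obscure axiom} in $\mathscr C$ makes $\begin{bmatrix}g&v\end{bmatrix}$ an $\mathscr X$-deflation. Its image is $\overline g$ up to the isomorphism $X\oplus F\simeq X$, which $(\mathrm{EC'})$ allows. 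This is Lemma~\ref{a lemma 2}(b). With it, $(\mathrm{E1}^{\op})$ is immediate from the composite $\begin{bmatrix}\gamma&b\end{bmatrix}\circ(\begin{bmatrix}\delta&a\end{bmatrix}\oplus1_G)$ of the lifts supplied by that lemma, and your pullback detour is unnecessary.

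Finally, the lemma's hypothesis is ``weakly idempotent complete \emph{or} $n\ge2$'', so there is nothing to avoid when $n\ge2$. For $n=1$, your $n$-kernel construction is unavailable, since it needs $n\ge2$. That case has to be handled separately, as the paper does via \cite[Theorem~3.6]{DI}.
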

		 
		\begin{proof} The implication (b)$\Rightarrow$(a) is trivial. Let us prove the direction	(a)$\Rightarrow$(b). 	For $n=1$,  this follows from   \cite[Theorem 3.6]{DI} and Remark \ref{ramark:two exact structures}. So we assume $n\ge 2$. 	Together with Proposition \ref{closed under homotopy equivalences},  it remains to check the following axioms:
			
			$( \rm E0)$: This is trivial.
			
			$( \rm E1)$: This is the dual of the following $( \rm E1^{\op})$.
			
			$( \rm E1^{\op})$: Let   $\overline{\delta}\colon {{X}}\to {{Y}}$	and $\overline{\gamma} \colon {{Y}}\to {{Z}}$ be two $\mathscr{X}_\mathscr{F}$-deflations. By Lemma \ref{a lemma 2}(b), we can lift them to 
			$\mathscr X$-deflations 	
			\[
			\begin{bmatrix}
				\delta&
				a
			\end{bmatrix}\colon X\oplus F\to Y 
			\quad \text{and} \quad \begin{bmatrix}
				\gamma&
				b
			\end{bmatrix}
			\colon Y\oplus G\to Z,
			\]
			where $F,G\in\mathscr F$. By \cite[Proposition~4.6]{Ja}, the morphism
			$\begin{bmatrix}\delta&a\end{bmatrix}\oplus 1_G$
			is again an $\mathscr X$-deflation.
			Since $\mathscr X$-deflations are closed under composition, the composition
			\[
			\begin{bmatrix}\gamma\delta&\gamma a&b\end{bmatrix}
			=
			\begin{bmatrix}\gamma&b\end{bmatrix}
			\begin{bmatrix}
				\delta&a&0\\
				0&0&1_G
			\end{bmatrix}
			\colon X\oplus F\oplus G\longrightarrow Z
			\]
			is an $\mathscr X$-deflation.  Lemma~\ref{a lemma 2}(b)  therefore implies
			that $\overline{\gamma}\circ \overline{\delta}$ is an
			$\mathscr X_{\mathscr F}$-deflation.

			$( \rm EI)$: We only show $( \rm EI)(i)$, as $( \rm EI)(ii)$ can be shown similarly. 
			For $\overline{{Y}^{\mr}}\in \mathscr{X}_\mathscr{F}$, let  $\overline{f^{\mr}} \colon \overline{{Y}^{\mr}} \to \overline{{X}^{\mr}}$ be a morphism to an $n$-exact sequence $\overline{{X}^{\mr}}$ in $\mathscr{C}/[\mathscr{F}]$, in which $\overline{f^0}$ and $\overline{f^{n+1}}$ are isomorphisms. Transporting the end terms of $\overline{{X}^{\mr}}$ along these isomorphisms, we may assume that both complexes $\overline{{Y}^{\mr}}$ and $\overline{{X}^{\mr}}$ have end terms $A,C$ and $\overline{f^0}=1_A$ and $\overline{f^{n+1}}=1_C$ from the beginning. Choose a homotopy equivalence $\overline{w^{\mr}}\colon\overline{W^{\mr}}\to \overline{Y^{\mr}}$ fixing the end terms, where $W^{\mr}\in \mathscr{X}$. Thus we have
			 the following commutative diagram with $n$-exact rows.
			\begin{equation*}
				\begin{tikzcd}
					\overline{W^{\mr}}\dar{\overline{w^{\mr}}}	& A \dar[equal]{} \rar{\overline{d_W^0}} & W^1 \rar{\overline{d_W^1}}\dar{\overline{w^1}}& \cdots\rar{\overline{d^{n-1}_W}} & W^n\rar{\overline{d_W^n}} \dar{\overline{w^n}} & C \dar[equal]{}\\
					\overline{Y^{\mr}}\dar{\overline{f^{\mr}}}	& A \dar[equal]{} \rar{\overline{d_Y^0}} & Y^1 \rar{\overline{d_Y^1}}\dar{\overline{f^1}}& \cdots\rar{\overline{d^{n-1}_Y}} & Y^n\rar{\overline{d_Y^n}} \dar{\overline{f^n}} & C \dar[equal]{}\\ 
					\overline{{X}^{\mr}}	& A \rar{\overline{d^0_X}} & X^1\rar{\overline{d^1_X}} & \cdots\rar{\overline{d^{n-1}_X}}  & X^n \rar{\overline{d^n_X}} & C
				\end{tikzcd}
			\end{equation*}	
			
			By \cite[Lemma 4.22(1)]{HLN}, it suffices to show $\overline{{X}^{\mr}}\in \mathscr{X}_\mathscr{F}$, and thus 
			$\overline{f^{\mr}}:\overline{Y^{\mr}} \to \overline{{X}^{\mr}}$ is a homotopy equivalence in $\mathscr{C}/[\mathscr{F}]$.						
			The equality $ \overline{d_W^n}=\overline{d^n_X}\circ \overline{f^n} \circ \overline{w^n} $ implies  that
			$  {d_W^n}={d^n_X} {f^n}  {w^n}+ba$  for some morphisms $a\colon W^n \to F$ and $b\colon F \to C$ with $F\in \mathscr{F}$. Thus we have
			$$
			d_W^n=\begin{bmatrix}d_X^n&b\end{bmatrix}
			\begin{bmatrix}f^nw^n\\a\end{bmatrix}.
			$$	
			By Proposition \ref{Strong obscure axiom} for the $n$-exact category $(\mathscr{C}, \mathscr{X})$, $\theta=\begin{bmatrix}d_X^n&b\end{bmatrix}\colon X^n\oplus F\to C$ is an $\mathscr{X}$-deflation. Complete $\theta$ to ${Z^{\mr}}\in \mathscr{X}$ of the form 
			\[ Z^0 \xrightarrow{d_Z^0} Z^1 \xrightarrow{d_Z^1}  \cdots \xrightarrow{d_Z^{n-2}} Z^{n-1} \xrightarrow{d_Z^{n-1}} X^{n}\oplus F \xrightarrow{ {\theta}}C\]
			Moreover, $\overline{Z^{\mr}}\in \mathscr{X}_\mathscr{F}$. The weak kernel properties of $\overline{Z^{\mr}}$ extend
			 the  commutative square  
			\begin{equation*}
				\begin{tikzcd}
					\overline{{X}^{\mr}} &	A \rar{\overline{d^0_X}} & X^1\rar{\overline{d^1_X}} & \cdots \rar{} & X^{n-1}\rar{\overline{d^{n-1}_X}}  & X^n \rar{\overline{d^n_X}}\dar[equal] & C\dar[equal] \\
					\overline{Z^{\mr}} &		 Z^0  \rar{\overline{d_Z^0}} & Z^1 \rar{\overline{d_Z^1}}& \cdots \rar{} & Z^{n-1}\rar{\overline{d^{n-1}_Z}} & X^n\rar{\overline{d^n_X}}  & C 
				\end{tikzcd}
			\end{equation*}	
			to a morphism from $\overline{{X}^{\mr}}$ to $\overline{Z^{\mr}}$.  This is a
			homotopy equivalence  by \cite[Proposition 2.7]{Ja}. Thus $\overline{Z^{\mr}}\in \mathscr{X}_\mathscr{F}$ implies $\overline{{X}^{\mr}}\in \mathscr{X}_\mathscr{F}$ by Proposition \ref{closed under homotopy equivalences}. This proves (EI)(i).	
			
			Finally, for any $n\geq1$, the induced map
			\[
			\Phi\colon\nExt^1_{\mathscr X}(C,A)\rightarrow
			\nExt^1_{\mathscr X_{\mathscr F}}(C,A)
			\]
			is surjective by the definition of $\mathscr X_{\mathscr F}$.		
			Since the quotient functor $\pi_{\mathscr{F}}$ preserves direct sums and morphisms
			of conflations,  $\Phi$ is a group homomorphism by \cite[Lemma~4.30]{HLN}.
			Let $[X^{\mr}]\in\ker\Phi$.
			By \cite[Remark~4.33]{HLN} and the dual of
			\cite[Proposition~2.6]{Ja}, 			
			$\overline{d_X^n}$ is a split epimorphism.
			Consequently, there are morphisms $s\colon C\to X^n$, $a\colon C\to F$ and $b\colon F\to C$ in $\mathscr{C}$ with $F\in\mathscr F$ such that
			\[
			1_C-d_X^ns=ba.
			\]
			Since $F$ is $\mathscr X$-projective, there is a morphism
			$t\colon F\to X^n$ such that $b=d_X^nt$.
			Thus $d_X^n(s+ta)=1_C$.
			The same splitting criterion gives $[X^{\mr}]=0$.
			Hence $\Phi$ is injective, completing the proof.	
				\end{proof}	 
			
				\begin{remark} Assume that $\mathscr{F}$ is a removable subcategory of $\mathscr{C}$. 
					The $n$-exact structure $\mathscr{X}_\mathscr{F}$ coincides with each of the
					following classes:
					\begin{itemize}
						\item [(a)] The class of $n$-exact sequences in $\mathscr{C}/[\mathscr{F}]$ that are homotopy equivalent to the images of $\mathscr{X}$-conflations. 
						\item [(b)] The class of $n$-exact sequences in $\mathscr{C}/[\mathscr{F}]$ that  are weakly isomorphic
								 to the images of $\mathscr{X}$-conflations. 
						\end{itemize}
						Indeed, (a) follows from
						Proposition~\ref{closed under homotopy equivalences} and the
						definition of $\mathscr X_{\mathscr F}$.
						For (b), Theorem~\ref{main thm1} shows that
						$\mathscr X_{\mathscr F}$ is an $n$-exact structure and
						hence is closed under weak isomorphisms.
						The reverse inclusion follows since the homotopy equivalences
						defining $\mathscr X_{\mathscr F}$ fix both end terms
						and hence are weak isomorphisms.
						
						When $n=1$, $\mathscr X_{\mathscr F}$ is also the class of
						composable pairs in $\mathscr C/[\mathscr F]$ that are
						isomorphic to images of $\mathscr X$-conflations;
						see Remark~\ref{ramark:two exact structures}.					
				\end{remark}
			
					\begin{remark}  \label{alternative quotient proof}
						Assume that $\mathscr F$ is removable.	The $n$-exactness of
							$(\mathscr C/[\mathscr F],\mathscr X_{\mathscr F})$
							can also be deduced from the theory of $n$-exangulated categories.
							Indeed, by \cite[Proposition~4.34]{HLN},
							$(\mathscr C,\mathscr X)$ carries a canonical
							$n$-exangulated structure $(\mathscr C,\mathbb E,\mathfrak s)$, 
							where $\mathbb E\colon \mathscr C^{\rm op}\times\mathscr C\to Ab$ is a  
							biadditive functor, and $\mathfrak s$ is an exact realization of $\mathbb E$, satisfying certain conditions. We refer to \cite[Definition 2.32]{HLN} for more details. 
							Since $\mathscr F$ is removable, every $\mathfrak s$-conflation
							has an $n$-exact image in $\mathscr C/[\mathscr F]$ and hence
							induces a weak kernel-cokernel sequence.
							Thus \cite[Theorem~3.1]{HZZ} yields an $n$-exangulated structure
							$(\mathscr C/[\mathscr F],\overline{\mathbb E},
							\overline{\mathfrak s})$. 
							Consequently, its $\overline{\mathfrak s}$-conflations are
							precisely the sequences in $\mathscr X_{\mathscr F}$.
							These are $n$-exact, since $n$-exactness is preserved under
							homotopy equivalences. In particular, all
							$\overline{\mathfrak s}$-inflations are monomorphisms and all
							$\overline{\mathfrak s}$-deflations are epimorphisms.
							By \cite[Proposition~5.8]{Kla},
							$(\mathscr C/[\mathscr F],\mathscr X_{\mathscr F})$
							is therefore an $n$-exact category. 
							
							Moreover, this construction also yields the extension-group
							isomorphism. Indeed, for any $A,C\in\mathscr C$, we have
							$
							\overline{\mathbb E}(C,A)
							=\mathbb E(C,A)
							=\nExt^1_{\mathscr X}(C,A).
							$
							By \cite[Lemma~5.9]{Kla}, the quotient realization induces
							a natural isomorphism of abelian groups
							$
							\overline{\mathbb E}(C,A)
							\xrightarrow{\sim}
							\nExt^1_{\mathscr X_{\mathscr F}}(C,A).
							$
							Under the above identification, this sends
							$[X^{\mr}]$ to $[\pi_{\mathscr F}(X^{\mr})]$,
							so it is precisely the map induced by $\pi_{\mathscr F}$.				
						\end{remark}

				The following theorem gives the Frobenius version of
				Theorem~\ref{main thm1}.
	
				\begin{theorem} \label{main thm2} Assume that $(\mathscr{C}, \mathscr{X})$ is a Frobenius $n$-exact category.  Let $\mathscr F\subset\mathscr P$ be an additive subcategory.
				Then the following conditions are equivalent.
				\begin{itemize}
					\item [(a)] $\mathscr{F}$ is a removable subcategory of $\mathscr{C}$.
					\item [(b)]  $(\mathscr{C}/[\mathscr{F}],{\mathscr{X}_\mathscr{F}})$ is a Frobenius $n$-exact  category, whose subcategory of $\mathscr{X}_\mathscr{F}$-projectives is precisely $\mathscr P/[\mathscr F]$.			
				\end{itemize}
			\end{theorem}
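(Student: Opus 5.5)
The implication (b)$\Rightarrow$(a) is immediate from Theorem~\ref{main thm1}, since a Frobenius $n$-exact category is in particular $n$-exact. For (a)$\Rightarrow$(b), Theorem~\ref{main thm1} already makes $(\mathscr C/[\mathscr F],\mathscr X_{\mathscr F})$ an $n$-exact category. Note that $\mathscr F\subset\mathscr P=\mathscr P\cap\mathscr I$ because $\mathscr C$ is Frobenius. Three things then remain: every object of $\mathscr P$ is $\mathscr X_{\mathscr F}$-projective and $\mathscr X_{\mathscr F}$-injective; the quotient has enough of both; and every $\mathscr X_{\mathscr F}$-projective or $\mathscr X_{\mathscr F}$-injective object lies in $\mathscr P$ (up to isomorphism in $\mathscr C/[\mathscr F]$).

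\emph{Step 1: $\mathscr P$ consists of $\mathscr X_{\mathscr F}$-projectives.} Let $P\in\mathscr P$ and let $\overline\delta\colon X\to Y$ be an $\mathscr X_{\mathscr F}$-deflation. If $n\ge2$ or $\mathscr C$ is weakly idempotent complete, Lemma~\ref{a lemma 2}(b) provides an $\mathscr X$-deflation $[\delta\ b]\colon X\oplus F\to Y$ with $F\in\mathscr F$. In general one can argue directly: $\overline\delta$ is, up to homotopy equivalence fixing the end terms, the image of an $\mathscr X$-deflation $\delta'$ with $\overline{\delta'}=\overline\delta\,\overline\alpha$. It therefore suffices to lift a morphism $\overline u\colon P\to Y$ through $\delta'$, which is possible because $P$ is $\mathscr X$-projective, and then to compose with $\overline\alpha$. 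Using the lemma, lift $u\colon P\to Y$ through $[\delta\ b]$ as $u=\delta v+bw$. Then $\overline u=\overline\delta\,\overline v$, since $bw$ factors through $F$. Injectivity is dual.

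\emph{Step 2: enough projectives and injectives.} For $M\in\mathscr C$, take an $\mathscr X$-conflation $N\to P^1\to\cdots\to P^n\to M$ with $P^i\in\mathscr P$. Its image under $\pi_{\mathscr F}$ lies in $\mathscr X_{\mathscr F}$ and has middle terms in $\mathscr P/[\mathscr F]$, so the quotient has enough projectives. The injective case is dual.

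\emph{Step 3: the converse inclusion.} This is the main obstacle, because of the ambiguity caused by objects that become isomorphic in the quotient. Suppose $M$ is $\mathscr X_{\mathscr F}$-projective. Take the image of the conflation from Step 2. Its deflation $\overline{d^n}\colon P^n\to M$ is an $\mathscr X_{\mathscr F}$-deflation, so $1_M$ lifts through it; hence $\overline{d^n}$ is a split epimorphism in $\mathscr C/[\mathscr F]$. Writing $1_M-d^ns=ba$ with $b\colon F\to M$ and $F\in\mathscr F$, and using that $F$ is $\mathscr X$-projective to write $b=d^nt$, we get $d^n(s+ta)=1_M$, exactly as in the last paragraph of the proof of Theorem~\ref{main thm1}. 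So $d^n$ is split in $\mathscr C$, and $M$ is a summand of $P^n$ in $\mathscr C$. In a Frobenius $n$-exact category summands of projectives are projective, since the $\mathscr X$-projectivity condition is closed under direct summands. Hence $M\in\mathscr P$. This argument also proves the statement up to isomorphism in the quotient: an object isomorphic in $\mathscr C/[\mathscr F]$ to one in $\mathscr P$ is itself $\mathscr X_{\mathscr F}$-projective, and the argument then places it in $\mathscr P$. Dually, $\mathscr X_{\mathscr F}$-injectives lie in $\mathscr I=\mathscr P$.

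Combining the three steps, the $\mathscr X_{\mathscr F}$-projectives and the $\mathscr X_{\mathscr F}$-injectives both coincide with $\mathscr P/[\mathscr F]$. Together with Step 2 this shows that $(\mathscr C/[\mathscr F],\mathscr X_{\mathscr F})$ is Frobenius.
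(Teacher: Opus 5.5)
Your proof is correct and follows essentially the same route as the paper. Objects of $\mathscr P/[\mathscr F]$ are $\mathscr X_{\mathscr F}$-projective-injective by lifting through $\mathscr X$-deflations, enough projectives and injectives come from images of $\mathscr X$-conflations, and the converse inclusion uses the same splitting $d^n(s+ta)=1_M$. The one difference is that your direct lift through $\delta'=d_Z^n$ in Step~1 avoids Lemma~\ref{a lemma 2}, so, unlike the paper, you do not need to handle $n=1$ separately by appealing to the Frobenius exact case.
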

			\begin{proof} (b)$\Rightarrow$(a) is trivial. Let us prove the direction	(a)$\Rightarrow$(b). 	For $n=1$,  this follows from   \cite[Lemma 2.7]{CDR}. So we assume $n\ge 2$. 
				By Theorem \ref{main thm1}, the pair $(\mathscr{C}/[\mathscr{F}], \mathscr{X}_\mathscr{F})$ is an $n$-exact category. 												
				By Lemma \ref{a lemma 2}, we know that any $\mathscr{X}_\mathscr{F}$-inflation (resp. $\mathscr{X}_\mathscr{F}$-deflation) is  isomorphic to the image of an $\mathscr{X}$-inflation (resp. $\mathscr{X}$-deflation). Then it follows that objects in $\mathscr{P}/[\mathscr{F}]$ are both $\mathscr{X}_\mathscr{F}$-projectives and $\mathscr{X}_\mathscr{F}$-injectives in the $n$-exact category $(\mathscr{C}/[\mathscr{F}], \mathscr{X}_\mathscr{F})$. 
				
				Suppose that $Q$ is $\mathscr{X}_\mathscr{F}$-projective in $\mathscr{C}/[\mathscr{F}]$. Since $\mathscr{C}$ has enough $\mathscr{X}$-projectives, we take an $\mathscr{X}$-conflation
				$$A\xrightarrow{d^0_P} P^1 \to \dots \to P^n \xrightarrow{d^n_P} Q$$
				with $P^1,\dots,P^n\in \mathscr{P}$. Its image is an $\mathscr{X}_\mathscr{F}$-conflation. Since $Q$ is $\mathscr{X}_\mathscr{F}$-projective in $\mathscr{C}/[\mathscr{F}]$, there is a morphism ${q}\colon Q\to P^n$ such that $\overline{d^n_P}\circ \overline{q}=1_Q$. Then $d^n_Pq-1_Q$ has a factorization
				$$Q\xrightarrow{a}F \xrightarrow{b} Q.$$ 
				Since $F$ is $\mathscr{X}$-projective in $\mathscr{C}$ and $d^n_P$ is an 
				$\mathscr{X}$-deflation, there exists $c\colon F\to P^n$ such that $b=d^n_Pc$. Consequently, we have $1_Q=d^n_P(q-ca)$. This implies that $d^n_P$ is a split epimorphism, and thus $Q\in \mathscr{P}$. The injective case is dual. Hence the objects of $\mathscr{P}/[\mathscr{F}]$ are precisely the $\mathscr{X}_\mathscr{F}$-projectives in the $n$-exact category $(\mathscr{C}/[\mathscr{F}], \mathscr{X}_\mathscr{F})$, which coincide with the $\mathscr{X}_\mathscr{F}$-injectives.
				
				For any $C\in \mathscr{C}$, take two $\mathscr{X}$-conflations
				$$A\xrightarrow{} P^1 \to \dots \to P^n \xrightarrow{} C\quad \text{and}\quad C\xrightarrow{} I^1 \to \dots \to I^n \xrightarrow{} B$$
				with all $P^i,I^i\in \mathscr{P}$. Their images are $\mathscr{X}_\mathscr{F}$-conflations. From these, we know that $(\mathscr{C}/[\mathscr{F}], \mathscr{X}_\mathscr{F})$ has enough $\mathscr{X}_\mathscr{F}$-projectives and enough $\mathscr{X}_\mathscr{F}$-injectives. Therefore the $n$-exact category $(\mathscr{C}/[\mathscr{F}], \mathscr{X}_\mathscr{F})$ is Frobenius.  		
			\end{proof}

				\begin{remark} \label{canonical equivalence}
				Assume that $(\mathscr C,\mathscr X)$ is Frobenius and
				$\mathscr F$ is removable.
				The quotient functor $\pi_{\mathscr F}$ induces a canonical
				equivalence of $(n+2)$-angulated categories
				$$ \underline{\pi_\mathscr{F}}\colon \quad \underline{\mathscr{C}}=
				\mathscr C/[\mathscr P]
				\xlongrightarrow{\sim}
				(\mathscr C/[\mathscr F])/(\mathscr P/[\mathscr F])=\underline{\mathscr C/[\mathscr F]},
				$$	
				which follows from the construction in \cite[Section~5.2]{Ja}. Indeed, since $\mathscr F\subset\mathscr P$, both stable categories have morphism groups
				$\mathscr C(A,B)/[\mathscr P](A,B)$.
				Hence $\underline{\pi_\mathscr{F}}$ is an additive equivalence.				
				For each $A\in\mathscr C$, choose an $\mathscr X$-conflation
				\[
				I(A)\colon\quad
				A\rightarrow I^1\rightarrow\cdots
				\rightarrow I^n\rightarrow SA,
				\]
				with  $I^i\in\mathscr P$. Use $I(A)$ and $\pi_{\mathscr F}(I(A))$ to construct the
				suspension functors $\Sigma\colon \underline{\mathscr{C}} \to  \underline{\mathscr{C}}$ and $\Sigma_{\mathscr F}\colon \underline{\mathscr C/[\mathscr F]} \to \underline{\mathscr C/[\mathscr F]}$,
				respectively, as in \cite[Section~5.2]{Ja}.
				These compatible choices give a natural isomorphism
				$
				\underline{\pi_{\mathscr F}}\Sigma
				\simeq\Sigma_{\mathscr F}\underline{\pi_{\mathscr F}}.
				$
				Since $\pi_{\mathscr F}$ preserves conflations and
				projective-injective objects, it sends the diagrams defining
				standard $(n+2)$-angles to such diagrams in the quotient.
				Thus
				$\underline{\pi_{\mathscr F}}$ preserves standard
				$(n+2)$-angles.
				The transported angulation is therefore contained in the
				quotient angulation, and the two coincide by
				\cite[Proposition~5.2]{Ja}.
			\end{remark}
			
	\subsection{A sufficient condition} 
	In  this subsection, 
	we give a sufficient condition for $\mathscr{F}$ to be a removable subcategory of $\mathscr{C}$.
	\begin{proposition} \label{A sufficient condition}
		Assume that $\mathscr F\subset\mathscr P\cap\mathscr I$ satisfies the following conditions.
		\begin{itemize}
			\item[$(a)$] For any object $Z\in\mathscr{C}$, there exists a sequence
			$$Z\stackrel{f^0}\longrightarrow F\stackrel{f^1}\longrightarrow P^1\stackrel{f^2}\longrightarrow \dots \stackrel{}\longrightarrow P^{n-1}  \stackrel{f^{n}}\longrightarrow P^{n},$$ where  $f^0$ is a left $\mathscr{F}$-approximation of $Z$, and for each $1\le i \le n$,    $f^i$ is a weak cokernel of $f^{i-1}$ with $P^{i}\in \mathscr{P}$; 
			\item[$(b)$] For any object $Z\in\mathscr{C}$, there exists a sequence
			$$I_n\stackrel{f_{n}}\longrightarrow I_{n-1}\stackrel{f_{n-1}}\longrightarrow  \dots  \longrightarrow I_1 \stackrel{f_{1}}\longrightarrow F \stackrel{f_0}\longrightarrow Z,$$ where  $f_{0}$ is a right $\mathscr{F}$-approximation of $Z$, and for each $1\le i \le n$, $f_{i}$ is a weak kernel of $f_{i-1}$ with $I_{i}\in \mathscr{I}$.
		\end{itemize}
		Then $\mathscr{F}$ is a removable subcategory of $\mathscr{C}$.		
	\end{proposition}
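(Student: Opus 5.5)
We must show that for every $\mathscr X$-conflation $X^{\mr}$ the image $\overline{X^{\mr}}$ is $n$-exact in $\mathscr C/[\mathscr F]$. We prove right $n$-exactness using condition (a); left $n$-exactness follows dually from condition (b). Concretely, we must show two things for every test object $Z$ and every $1\le i\le n+1$. First, if $\overline{g}\colon X^i\to Z$ satisfies $\overline{g}\,\overline{d_X^{i-1}}=0$, then $\overline{g}$ factors through $\overline{d_X^i}$; for $i=n+1$ this reads $\overline g=0$. Second, this is what is needed at the last stage: exactness at $\mathscr C/[\mathscr F](X^{n+1},Z)$ is injectivity, i.e. $\overline{d_X^n}$ is an epimorphism.

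The first reduction is to lift the vanishing condition to an honest vanishing in $\mathscr C$ after adding an $\mathscr F$-summand. Suppose $g d_X^{i-1}$ factors through some object of $\mathscr F$. Using the left $\mathscr F$-approximation $f^0\colon Z\to F$ from (a), any map $W\to Z$ factoring through $\mathscr F$ factors as $W\to F'\to Z$. Since $F'\in\mathscr F\subset\mathscr I$ and $d_X^{i-1}$ sits in an $\mathscr X$-conflation, the map $X^{i-1}\to F'$ can be extended along $d_X^{i-1}$ for $i\ge 2$, or along $d_X^0$ when $i=1$. Injectivity of $F'$ guarantees this: $\mathscr C(X^{\mr},F')$ is exact at every interior spot, since the Hom into an injective is exact for an $n$-exact sequence; this is the dual of \cite[Proposition 2.6]{Ja}-type arguments, or follows directly from $(\mathrm{E2'})$ and contractibility. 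Hence we can modify $g$ by a map factoring through $\mathscr F$ so that $g d_X^{i-1}=0$ in $\mathscr C$. Then $g$ factors through $d_X^i$ in $\mathscr C$ by right $n$-exactness of $X^{\mr}$, and we pass to the quotient.

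In this scheme the epimorphism property of $\overline{d_X^n}$ is the subtle part, and it is where the full sequence in (a), and not just the approximation, is needed. Suppose $\overline{g}\,\overline{d_X^n}=0$ with $g\colon X^{n+1}\to Z$. By the above, $g d_X^n=\beta\alpha$ with $\alpha\colon X^n\to F$ and $\beta\colon F\to Z$, and we may take $F$ to be the approximation $f^0\colon Z\to F$ composed appropriately. The plan is to push $f^0 g$ along the sequence
\[
Z\xrightarrow{f^0}F\xrightarrow{f^1}P^1\to\cdots\to P^n .
\]
We use that the $P^i$ are projective, hence Hom from them preserves the exactness of the deflation side, and that the $f^i$ are weak cokernels. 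A diagram chase then produces a morphism $X^{n+1}\to F$ whose composite with $\beta$, or a homotopy built step by step, shows that $g$ factors through $F$. The idea is to compare the complex $X^{n+1}\leftarrow X^n\leftarrow\cdots$ against the sequence in (a) by descending induction, as in Step 1 of Proposition \ref{redundant}: construct maps $X^{n+1-k}\to P^{k}$ and correction terms. At the far end we use that $d_X^0$ is a monomorphism, or more precisely that maps out of $X^0$ to $P^n$ lift, in order to terminate the induction, which forces $g-{}$(something through $F$)$=0$.

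I expect this terminating step to be the main obstacle. It requires checking that the weak-cokernel property of each $f^i$, combined with $\mathscr X$-projectivity of $P^i$ against the deflation $d_X^n$, lets the homotopy be built all the way down; the length $n$ of the sequence in (a) exactly matches the length of $X^{\mr}$. Once both exactness properties are verified, $\overline{X^{\mr}}$ is right $n$-exact, and by the dual argument with (b) it is also left $n$-exact, so $\mathscr F$ is removable.
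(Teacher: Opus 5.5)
There is a genuine gap. The interior weak-cokernel step fails, and the underlying cause is that you paired conditions (a) and (b) with the wrong halves of $n$-exactness.

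\textbf{The interior weak-cokernel step.} Exactness of $\mathscr C(X^{\mr},F')$ at $X^{i-1}$ lets you extend $\alpha\colon X^{i-1}\to F'$ along $d^{i-1}_X$ only when $\alpha d^{i-2}_X=0$. You only know $\beta\alpha d^{i-2}_X=0$. For $i=1$ your argument is fine, since it is extension along the inflation $d^0_X$ into an $\mathscr X$-injective. If the step were valid for $i\ge 2$, these weak-cokernel properties of $\overline{X^{\mr}}$ would hold for every $\mathscr F\subset\mathscr P\cap\mathscr I$, without (a) or (b). That is false.

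For instance, let $\Lambda$ be the self-injective Nakayama algebra with simples $S_0,S_1$ and Loewy length $2$, and let $P_j$ be the projective cover of $S_j$. Then $\mathscr C=\add(\Lambda\oplus S_0)$ is $2$-cluster tilting in $\mod\Lambda$, and $S_0\to P_1\to P_0\to S_0$ is an $\mathscr X$-conflation. Take $\mathscr F=\add P_1$ and $g=1_{P_0}$. Then $gd^1_X=d^1_X\circ 1_{P_1}$ factors through $\mathscr F$. Yet $1_{P_1}$ does not extend along $d^1_X$, which is not monic. Moreover, $\overline g$ does not factor through $\overline{d^2_X}$, because $\Hom(S_0,P_0)=0=[\mathscr F](P_0,P_0)$.

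\textbf{The pairing of hypotheses.} Condition (a) gives a left $\mathscr F$-approximation $f^0\colon Z\to F$ of the test object. This controls morphisms $Z\to X^i$, so it serves left $n$-exactness. Morphisms $X^i\to Z$ factoring through $\mathscr F$ are instead absorbed by the right approximation $F\to Z$ in (b), so (b) serves right $n$-exactness.

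This mismatch is also why your epimorphism step never closes. $\mathscr X$-projectivity of $P^k$ lifts maps out of $P^k$ through $d^n_X$; it does not produce maps $X^{n+1-k}\to P^k$. Likewise, extending maps $X^0\to P^n$ along $d^0_X$ would need injectivity, not projectivity.

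\textbf{What the paper does.} It uses (a) for left $n$-exactness, uniformly in the position.
\begin{itemize}
\item For $1\le\ell\le n$, take $v^0\colon Z\to X^\ell$ with $d^\ell_Xv^0\in[\mathscr F]$ and write $d^\ell_Xv^0=v^1f^0$.
\item Extend along the weak cokernels $f^i$ to a chain map from $Z\to F\to P^1\to\cdots\to P^{n-\ell}$ into $X^\ell\to\cdots\to X^{n+1}$.
\item Lift the last component through $d^n_X$ using projectivity of $P^{n-\ell}$ (with $P^0=F$).
\item Descend along the weak kernels $d^{k-1}_X$ to obtain $v^0-u^0f^0=d^{\ell-1}_Xu^{-1}$.
\item For $\ell=0$, the same descent ends in $d^0_X(v^0-h^0f^0)=0$, and monicity of $d^0_X$ gives $v^0\in[\mathscr F]$.
\end{itemize}
Right $n$-exactness is the dual argument, using (b), the $\mathscr X$-injectives $I_k$ and the inflation $d^0_X$.
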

	\begin{proof}  
		It suffices to show that for any  $\mathscr{X}$-conflation 
		$$ X^{\mr}\colon\quad X^0 \stackrel{d^0_X}\rightarrow X^1  \stackrel{d^1_X}\longrightarrow \cdots \longrightarrow X^{n} \stackrel{d^n_X}\rightarrow X^{n+1}$$
		in $ \mathscr{X}$, its image under $\pi_\mathscr{F}$ is an $n$-exact sequence in $\mathscr{C}/[\mathscr{F}]$, that is, the sequence $(\overline{d^0_X},\dots,\overline{d^{n-1}_X})$ is an $n$-kernel of $\overline{d^{n}_X}$,  and  $(\overline{d^1_X},\dots,\overline{d^n_X})$ is an $n$-cokernel of $\overline{d^0_X}$. We only show the former one by using (a) since the  latter assertion  can be shown  dually by using (b). The proof is divided into the following two steps
		
		\emph{Step $1$}: To prove that $\overline{d^0_X}$ is a monomorphism, it amounts to showing that any morphism $g^0\colon Z\to X^0$ in $\mathscr{C}$ satisfying $d^0_Xg^0\in [\mathscr{F}](Z,X^1)$ necessarily belongs to $ [\mathscr{F}](Z,X^0)$. Consider the sequence in (a).  Since
		$d^0_Xg^0 \colon Z\rightarrow X^1$ factors through an object in
		$\mathscr{F}$ and $f^0\colon Z\rightarrow F$ is a left
		$\mathscr{F}$-approximation, there exists a morphism $g^1\colon
		F\rightarrow X^1$ such that $d^0_Xg^0=g^1f^0$. Since $f^i$ is a weak cokernel of $f^{i-1}$ for $1\le i\le n$, 
		$(g^0,g^1)$ can be extended to a morphism of complexes:
		\begin{equation*}
			\begin{tikzcd}
				Z \dar{g^0} \rar{f^0} & F \rar{f^1}\dar[dotted]{g^1}& P^1 \rar{f^2}\dar[dotted]{g^{2}}&\cdots\rar &  P^{n-1}\rar{f^n} \dar[dotted]{g^{n}} & P^{n} \dar[dotted]{g^{n+1}}\\
				X^0 \rar[tail]{d^0_X} & X^1\rar{d^1_X} & X^2\rar{d^2_X} & \cdots\rar  & X^n \rar[two heads]{d^n_X} & X^{n+1}
			\end{tikzcd}
		\end{equation*}
	  Let  $f^{n+1}\colon P^{n}\to 0$ and $h^{n+1}\colon 0\to X^{n+1}$ be the zero morphisms.   Write $P^0=F$. 
		Next we construct morphisms $h^{k}\colon P^{k}\to X^{k}$  $(0\le k \le n)$  satisfying 
		\begin{align} \label{g^{k}}
			g^{k+1}=h^{k+1}f^{k+1}+d_X^{k}h^{k}
		\end{align}
		by descending induction on $k$. 
		Since $P^{n}$ is projective and $d_X^n\colon X^n\to X^{n+1}$ is an $\mathscr{X}$-deflation, we may lift $g^{n+1}$ to a morphism $h^{n}\colon P^{n}\to X^n$ such that $g^{n+1}=d_X^nh^{n}$.
		Assume that for some $1\le k \le n$, we have constructed morphisms $h^{k},h^{k+1},\dots,h^{n}$ satisfying (\ref{g^{k}}). 
		Composing $f^{k}$ to  both sides of  (\ref{g^{k}}), we have $d_X^{k}h^{k}f^{k}=g^{k+1}f^{k}=d_X^{k}g^{k}$, that is, $d_X^{k}(g^{k}-h^{k}f^{k})=0$. Since $d_X^{k-1}$ is a weak kernel of $d_X^{k}$, there exists a morphism $$h^{k-1}\colon P^{k-1}\to X^{k-1}$$ such that $d_X^{k-1}h^{k-1}=g^{k}-h^{k}f^{k}$. This completes the descending induction. Hence we have $d_X^{0}h^{0}=g^1-h^{1}f^1$. Composing $f^{0}$ to  both sides, we have $$d_X^0h^0f^0=g^1f^0=d_X^0g^0.$$ Note that $d_X^0$ is a monomorphism and $F\in \mathscr{F}$. Then we have $g^0=h^0f^0$ and $g^0$ belongs to  $ [\mathscr{F}](Z,X^0)$. Hence $\overline{d^0_X}$ is a monomorphism.
		
		\emph{Step $2$}: We show that  $\overline{d_X^{\ell-1}}$ is a weak kernel of $\overline{d_X^\ell}$ for $1\le \ell \le n$. Put $P^0=F$. 
		Let $v^0\colon Z\to X^{\ell}$ be a morphism in $\mathscr{C}$ such that $d_X^{\ell}v^0\in  [\mathscr{F}](Z,X^{\ell+1})$. We  need to show that $\overline{v^0}$ factors through $\overline{d_X^{\ell-1}}$. Since $f^0\colon Z\rightarrow F$ is a left $\mathscr{F}$-approximation, there exists a morphism $v^1\colon F\to X^{\ell+1}$ such that $v^1f^0=d_X^\ell v^0$. Using  the factorization property of weak cokernels, we obtain the following commutative diagram.
		\begin{equation*}
			\begin{tikzcd}
				Z\rar{f^{0}} \dar{v^{0}}& F\rar{f^1} \dar[dotted]{v^{1}} & P^1 \dar[dotted]{v^{2}} \rar{f^2}& \cdots \rar &  P^{n-\ell-1}\rar{f^{n-\ell}} \dar[dotted]{v^{n-\ell}} & P^{n-\ell} \dar[dotted]{v^{n+1-\ell}}\\
				X^{\ell} \rar{d_X^{\ell}} & X^{\ell+1} \rar{d_X^{\ell+1}} & X^{\ell+2} \rar{d_X^{\ell+2}}& \cdots\rar & X^n \rar[two heads]{d_X^n} & X^{n+1}
			\end{tikzcd}
		\end{equation*}
		Since $P^{n-\ell}$ is projective and $d_X^n\colon X^n\to X^{n+1}$ is an $\mathscr{X}$-deflation, we may lift $v^{n+1-\ell}$ to a morphism $u^{n-\ell}:P^{n-\ell}\to X^{n}$ such that $v^{n+1-\ell}=d_X^nu^{n-\ell}$. Then we have $$d_X^n(v^{n-\ell}-u^{n-\ell}f^{n-\ell})=d_X^nv^{n-\ell}-v^{n+1-\ell}f^{n-\ell}=0.$$ Thus there exists a morphism $u^{n-\ell-1}\colon P^{n-\ell-1}\to X^{n-1}$ such that $$v^{n-\ell}=u^{n-\ell}f^{n-\ell}+d_X^{n-1}u^{n-\ell-1}.$$ By a descending induction as before, one can show that there are morphisms $u^{-1}\colon Z\to X^{\ell-1}$ and $u^{0}\colon F\to X^{\ell}$ such that
		$v^0-u^0f^0=d_X^{\ell-1}u^{-1}$.  By applying $\pi_\mathscr{F}$, we get $\overline{v^0}=\overline{d_X^{\ell-1}}\circ \overline{u^{-1}}$ since $F\in \mathscr{F}$. This completes the proof.
	\end{proof}
	
	For $n=1$, Proposition~\ref{A sufficient condition}
	together with Theorem~\ref{main thm2} recovers
	Chen's result \cite[Theorem~3.1]{C}.
	
	\subsection{Compatibility with cluster tilting} In this subsection, 
	we show that the ideal quotient construction 
	is compatible with cluster tilting.

	\begin{theorem} \label{Compatibility with cluster tilting}
		Let $(\mathscr A,\mathscr E)$ be an exact category,
		and let $\mathscr C$ be an $n$-cluster tilting subcategory
		of $\mathscr A$.
		Let $(\mathscr C,\mathscr X)$ be the induced 
		$n$-exact category of Theorem~\ref{n-exact from n-CT}.
		If $\mathscr F$ is a removable subcategory of $\mathscr A$,
		then the following hold.
		\begin{itemize}
			\item[(a)]
			$\mathscr F$ is a removable subcategory of
			$(\mathscr C,\mathscr X)$.
			\item[(b)]
			$\mathscr C/[\mathscr F]$ is an $n$-cluster tilting
			subcategory of $\mathscr A/[\mathscr F]$.
		\end{itemize}
		Moreover, the $n$-exact structure induced on
		$\mathscr C/[\mathscr F]$ by
		Theorem~\ref{n-exact from n-CT}
		coincides with $\mathscr X_{\mathscr F}$ of Theorem \ref{main thm1}.
		In other words, the diagram of constructions
		\[
		\begin{tikzcd}[column sep=2.4em,row sep=2.4em]
			(\mathscr A,\mathscr E)
			\arrow[r,"\pi_{\mathscr F}"]
			\arrow[d,Rightarrow]
			&
			(\mathscr A/[\mathscr F],\mathscr E_{\mathscr F})
			\arrow[d,Rightarrow]
			\\
			(\mathscr C,\mathscr X)
			\arrow[r,"{\pi_{\mathscr F}|_{\mathscr C}}"]
			&
			(\mathscr C/[\mathscr F],\mathscr X_{\mathscr F})
		\end{tikzcd}
		\]
		 commutes, where the vertical arrows represent the construction in
		Theorem~\ref{n-exact from n-CT}.
		
		If, in addition,  $(\mathscr A,\mathscr E)$ is Frobenius and $\mho^n_{\mathscr{E}}(\mathscr{C})\subset\mathscr{C}$, then 
		$\mho_{\mathscr E_{\mathscr F}}^{n}
		(\mathscr C/[\mathscr F])
		\subset\mathscr C/[\mathscr F].
		$ In this case, the two exact categories in the top row are
		Frobenius, and the two $n$-exact categories
		in the bottom row are Frobenius $n$-exact categories.
		
	\end{theorem}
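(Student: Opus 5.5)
We use the $n=1$ case of Theorem~\ref{main thm1} (Demonet--Iyama) for $(\mathscr A,\mathscr E)$: since $\mathscr F$ is removable there, $(\mathscr A/[\mathscr F],\mathscr E_{\mathscr F})$ is an exact category. By Remark~\ref{ramark:two exact structures}, its conflations are the composable pairs isomorphic to images of $\mathscr E$-conflations, and $\pi_{\mathscr F}$ induces isomorphisms $\Ext^1_{\mathscr A}(C,A)\cong\Ext^1_{\mathscr A/[\mathscr F]}(C,A)$. The plan is to first transfer the higher extension groups, then deduce (b), and then compare conflation classes. Part~(a) will follow formally from this comparison.

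\emph{Step 1 (higher Ext).} I will show $\Ext^i_{\mathscr A}(X,Y)\cong\Ext^i_{\mathscr A/[\mathscr F]}(X,Y)$ for $1\le i<n$ and $X,Y\in\mathscr A$. Elements of $\Ext^i$ are Yoneda classes of $\mathscr E$-acyclic complexes of length $i+2$, that is, $i$-fold splicings of conflations. Since every $\mathscr E_{\mathscr F}$-conflation is isomorphic to the image of an $\mathscr E$-conflation, and $\pi_{\mathscr F}$ is full, every $\mathscr E_{\mathscr F}$-acyclic complex lifts up to isomorphism by splicing lifts. This gives surjectivity. For injectivity I would use dimension shifting instead of comparing Yoneda relations directly. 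Every object has an $\mathscr E$-inflation into... rather, $\mathscr F$-objects are projective-injective in both categories, so long exact sequences in $\Ext$ agree via the $\Ext^1$ isomorphism and the five lemma, applied to a conflation $\Omega X\to P\to X$ when one exists. If enough projectives are not available, I would instead argue with the natural transformation of $\delta$-functors $\Ext^\ast_{\mathscr A}\to\Ext^\ast_{\mathscr A/[\mathscr F]}$ and an induction on the Yoneda length. The key point is that Yoneda equivalences in the quotient lift, because morphisms of $\mathscr E_{\mathscr F}$-conflations lift to morphisms of $\mathscr E$-conflations up to maps factoring through $\mathscr F$, which can be absorbed using injectivity and projectivity of $\mathscr F$. \textbf{I expect this step to be the main obstacle.}

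\emph{Step 2 (part (b)).} With Step~1, the two $\Ext$-vanishing descriptions of $\mathscr C$ pass to $\mathscr C/[\mathscr F]$, since objects are the same. Functorial finiteness follows because the image of a $\mathscr C$-approximation remains an approximation, as $\pi_{\mathscr F}$ is full. The generating and cogenerating conditions follow because images of $\mathscr E$-deflations and $\mathscr E$-inflations are $\mathscr E_{\mathscr F}$-deflations and $\mathscr E_{\mathscr F}$-inflations.

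\emph{Step 3 (comparison of structures and (a)).} Let $\mathscr X'$ be the structure induced on $\mathscr C/[\mathscr F]$ by Theorem~\ref{n-exact from n-CT}. I will show $\mathscr X_{\mathscr F}\subseteq\mathscr X'$. An $\mathscr X$-conflation is homotopy equivalent to an $\mathscr E$-acyclic complex, and its image is homotopy equivalent to an $\mathscr E_{\mathscr F}$-acyclic complex; homotopy equivalences fixing end terms are preserved by $\pi_{\mathscr F}$. For $\mathscr X'\subseteq\mathscr X_{\mathscr F}$, I lift an $\mathscr E_{\mathscr F}$-acyclic complex to an $\mathscr E$-acyclic complex up to isomorphism as in Step~1. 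I then replace its middle terms by terms in $\mathscr C$ via the construction underlying Theorem~\ref{n-exact from n-CT}, using that homotopy equivalence to an $\mathscr E$-acyclic complex is the defining condition. Hence images of $\mathscr X$-conflations are $\mathscr X'$-conflations, which are $n$-exact in $\mathscr C/[\mathscr F]$. Also, $\mathscr F$ consists of $\mathscr E$-projective-injectives, which are $\mathscr X$-projective-injective by the argument in the proof of Theorem~\ref{n-exact from n-CT}. So $\mathscr F$ is removable for $\mathscr X$, proving~(a). The equality $\mathscr X'=\mathscr X_{\mathscr F}$ then follows, since both are closed under homotopy equivalences fixing end terms.

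\emph{Step 4 (Frobenius case).} The cosyzygy sequence $C\to I^1\to\cdots\to I^n\to\mho^n_{\mathscr E}C$ maps to an $\mathscr E_{\mathscr F}$-acyclic complex whose middle terms are projective-injective in the quotient by Theorem~\ref{main thm2} for $n=1$. Hence $\mho^n_{\mathscr E_{\mathscr F}}(C)\cong\mho^n_{\mathscr E}(C)$ in the stable sense, up to projective-injective summands, which lie in $\mathscr C/[\mathscr F]$. This gives $\mho^n_{\mathscr E_{\mathscr F}}(\mathscr C/[\mathscr F])\subset\mathscr C/[\mathscr F]$. The Frobenius conclusions follow from Theorem~\ref{main thm2} with $n=1$ and from Theorem~\ref{n-exact from n-CT}.
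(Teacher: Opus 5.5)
Your architecture matches the paper's: transfer $\Ext$, deduce (b), compare $\mathscr X_{\mathscr F}$ with the induced structure $\mathscr X'$, read off (a), then handle the Frobenius case. Steps~2 and~4 and the inclusion $\mathscr X_{\mathscr F}\subseteq\mathscr X'$ are fine. However, two load-bearing steps are not actually proved.

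\emph{Injectivity of $\phi^i$.} The first gap is injectivity of $\phi^i\colon\Ext^i_{\mathscr E}(U,V)\to\Ext^i_{\mathscr E_{\mathscr F}}(U,V)$. Part (b) genuinely needs it: it is what turns $\Ext^i_{\mathscr E_{\mathscr F}}(\mathscr C,X)=0$ into $X\in\mathscr C$. Dimension shifting is not available, since $\mathscr A$ need not have enough projectives. The idea of lifting Yoneda equivalences is not carried out. The $\delta$-functor route you mention is the one that works, and it is what the paper does:
\begin{itemize}
\item if $\phi^i(\xi)=0$, write $\xi=\partial_e(\eta)$, where $e\colon V\to B\xrightarrow{q}K$ is the first conflation of a representative and $\eta\in\Ext^{i-1}_{\mathscr E}(U,K)$;
\item map the exact sequence $\Ext^{i-1}_{\mathscr E}(U,B)\xrightarrow{q_*}\Ext^{i-1}_{\mathscr E}(U,K)\xrightarrow{\partial_e}\Ext^{i}_{\mathscr E}(U,V)$ to its analogue for $\mathscr E_{\mathscr F}$;
\item surjectivity of $\phi^{i-1}_{U,B}$ and injectivity of $\phi^{i-1}_{U,K}$ give $\eta\in\Im q_*$, so $\xi=0$.
\end{itemize}
For your splicing proof of surjectivity, lift each conflation with the same end terms via the $\Ext^1$-isomorphism. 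Remark~\ref{ramark:two exact structures} alone allows the end terms to change, and then the lifts need not splice.

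\emph{The inclusion $\mathscr X'\subseteq\mathscr X_{\mathscr F}$.} Your phrase ``replace its middle terms by terms in $\mathscr C$ via the construction underlying Theorem~\ref{n-exact from n-CT}'' is not an argument. A replacement is a different complex, and you give no homotopy equivalence fixing the end terms between its image and $\overline{Y^{\mr}}$. The paper handles this as follows:
\begin{itemize}
\item it lifts the Yoneda class of $\overline{Y^{\mr}}$ through $\phi^n$ (your Step~1 stops at $i<n$);
\item it represents the lift by an $\mathscr E$-acyclic $U^{\mr}$ with terms in $\mathscr C$, using Kvamme's Proposition~3.17 (closure of $\mathscr C$ under $n$-extensions);
\item it obtains a morphism $\overline{Y^{\mr}}\to\overline{U^{\mr}}$ fixing end terms from Kvamme's Lemma~3.15, applied to $\mathscr C/[\mathscr F]$;
\item it upgrades this morphism to a homotopy equivalence by $(\mathrm{EI})$ for $(\mathscr C/[\mathscr F],\mathscr X')$.
\end{itemize}

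Alternatively, your own lifting closes the gap with no replacement at all. Each lifted middle term $\widetilde Y^{j}$ is isomorphic in $\mathscr A/[\mathscr F]$ to $Y^j\in\mathscr C$, say via $\overline u$ with inverse $\overline v$. Then $vu-1_{\widetilde Y^{j}}=ba$ for some $a\colon\widetilde Y^{j}\to F$ and $b\colon F\to\widetilde Y^{j}$ with $F\in\mathscr F$. It follows that $\begin{bmatrix}v&-b\end{bmatrix}\begin{bmatrix}u\\a\end{bmatrix}=1_{\widetilde Y^{j}}$, so $\widetilde Y^{j}$ is a retract of $Y^j\oplus F$. Since $\mathscr C$ contains $\mathscr F$ and is closed under retracts by its $\Ext$-description, $\widetilde Y^{j}\in\mathscr C$. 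Hence the lift is already an $\mathscr X$-conflation whose image is isomorphic to $\overline{Y^{\mr}}$ fixing end terms. This is shorter than the paper's route and needs only $\phi^1$.
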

	\begin{proof} 
		Applying Theorem~\ref{main thm1} with $n=1$ to the  exact category $(\mathscr A,\mathscr E)$, we obtain
		an exact category $(\mathscr A/[\mathscr F],\mathscr E_{\mathscr F})$.  Since every object of $\mathscr{F}$ is both $\mathscr{E}$-projective and  $\mathscr{E}$-injective, the defining orthogonality conditions for $\mathscr{C}$ give $\mathscr{F}\subset \mathscr{C}$. The proof is divided into the following three steps.

		\emph{Step $1$}: First we show (b). The quotient functor $\pi_{\mathscr{F}}$ induces natural morphisms
		\begin{equation}\label{phi_i}
			\phi^i=\phi^i_{U,V}\colon 
			\Ext_{\mathscr E}^i(U,V)
			\to
			\Ext_{\mathscr E_{\mathscr F}}^i( U,V)			
		\end{equation}
		for any $U,V\in\mathscr A$ and $i\ge 1$. Here, $\Ext_{\mathscr E}^i(U,V)$ denotes the $i$-th Yoneda extension group of two objects $U,V$ in $\mathscr A$ with respect to
		$\mathscr E$; see for example \cite[Section 2.1]{Kv}. 
		We claim that these morphisms are isomorphisms
		for any $U,V\in\mathscr A$ by induction on $i$. 
		For $i=1$, this follows from the last assertion of
		Theorem~\ref{main thm1}, applied with $n=1$. 		
		Assume $i\ge 2$. Any $i$-fold extension (= $\mathscr{E}$-acyclic complex of length $i+2$) is the Yoneda composition of an $\mathscr E$-conflation and $(i-1)$-fold extension. Since $\mathscr F$ is removable subcategory of $\mathscr A$, $\pi_{\mathscr{F}}$ sends an $\mathscr E$-conflation to an  $\mathscr E_\mathscr F$-conflation. By the induction hypothesis, surjectivity of $\phi^1$ and $\phi^{i-1}$ gives surjectivity of $\phi^{i}$.
	
		 For injectivity, let $i\geq 2$ and take
		$\xi\in\Ext^i_{\mathscr E}(U,V)$ with $\phi^i(\xi)=0$.
		Write
		$\xi=\partial_e(\eta),$ where
		$e: 0\rightarrow V\rightarrow B
		\xrightarrow{q}K\rightarrow0\in \mathscr{E},
		$
		$\eta\in\Ext^{i-1}_{\mathscr E}(U,K)$ and
		$\partial_e$ is the connecting homomorphism given by
		Yoneda composition with $e$.	
		Naturality and compatibility with Yoneda composition give
		the following commutative diagram with exact rows.
		\begin{equation*}
		\begin{tikzcd}
			\Ext^{i-1}_{\mathscr E}(U, B)
			\rar{{q}_*}\dar{\phi^{i-1}_{U,B}}&
			\Ext^{i-1}_{\mathscr E}(U,K)
			\rar{\partial_{{e}}}\dar{\phi^{i-1}_{U,K}}&
			\Ext^i_{\mathscr E}(U,V)\dar{\phi^{i}}\\
			\Ext^{i-1}_{\mathscr E_{\mathscr F}}(U, B)
			\rar{\overline{q}_*}&
			\Ext^{i-1}_{\mathscr E_{\mathscr F}}(U,K)
			\rar{\partial_{\overline{e}}}&
			\Ext^i_{\mathscr E_{\mathscr F}}(U,V)
		\end{tikzcd}
	\end{equation*}
		Since
		$\partial_{\overline e}\phi^{i-1}(\eta)=\phi^i(\xi)=0$,
		we have
		$\phi^{i-1}(\eta)=\overline q_*(\zeta)$ for some
		$\zeta\in\Ext^{i-1}_{\mathscr E_{\mathscr F}}(U,B)$.
		By induction, the first two vertical maps are isomorphisms.
		Thus $\zeta=\phi^{i-1}(\eta_0)$ for some $\eta_0\in\Ext^{i-1}_{\mathscr E}(U,B)$.
	 Commutativity of the left square then gives
		$$
		\phi^{i-1}(q_*\eta_0)
		=\overline q_*(\zeta)
		=\phi^{i-1}(\eta),
		$$
		so $\eta=q_*\eta_0$. Thus we have
		$\xi=\partial_e(q_*\eta_0)=0$.
		This completes the induction.

		Since $\pi_{\mathscr F}$ is full, left and right $\mathscr C$-approximations
		descend to left and right $(\mathscr C/[\mathscr F])$-approximations, respectively. Moreover, the exactness of $\pi_{\mathscr F}$ ensures that the generating and cogenerating conditions also descend. Finally, \eqref{phi_i} transfers the defining
		orthogonality conditions for $\mathscr C$ to
		$\mathscr C/[\mathscr F]$.
		Hence $\mathscr C/[\mathscr F]$ is an $n$-cluster tilting
		subcategory of $\mathscr A/[\mathscr F]$. Thus (b) holds.

		\emph{Step $2$}: We  prove (a) and identify the induced structure. 	Every object of $\mathscr F$ is both $\mathscr X$-projective
		and $\mathscr X$-injective. Indeed, for $F\in\mathscr F$,
		 applying
		$\mathscr A(F,-)$ and $\mathscr A(-,F)$ to an
		$\mathscr E$-acyclic complex yields exact sequences, and exactness of these sequences is preserved
		under homotopy equivalences.
		
		Let $\mathscr Y$ be the class of complexes in
		${\bf C}^{n+2}_{\mathscr C/[\mathscr F]}$ that are homotopy
		equivalent to an $\mathscr E_{\mathscr F}$-acyclic complex
		with all terms in $\mathscr C/[\mathscr F]$. By Theorem~\ref{n-exact from n-CT} and its proof, we obtain an
		$n$-exact category $(\mathscr C/[\mathscr F],\mathscr Y)$. Let $X^{\mr}\in\mathscr X$, and choose an
		$\mathscr E$-acyclic complex $Z^{\mr}$ with terms in $\mathscr C$ and a homotopy equivalence
		between $X^{\mr}$ and $Z^{\mr}$. Since $\pi_{\mathscr F}$ is exact, we know that $\overline{Z^{\mr}}$ is $\mathscr E_{\mathscr F}$-acyclic, and thus $\overline{Z^{\mr}}\in \mathscr Y$. Since $\pi_{\mathscr F}$ preserves homotopy equivalences, 
		$\overline{X^{\mr}}$ is homotopy equivalent to $\overline{Z^{\mr}}$, and thus $\overline{X^{\mr}}\in \mathscr Y$ 
		is $n$-exact in $\mathscr C/[\mathscr F]$. This proves (a). 
		
		Now	we show that $\mathscr Y=\mathscr X_{\mathscr F}$.
		We have shown that $\overline{X^{\mr}}\in\mathscr Y$ for any
		$X^{\mr}\in\mathscr X$.
		Since $\mathscr Y$ is closed under homotopy equivalences,
		the definition of $\mathscr X_{\mathscr F}$ gives
		$\mathscr X_{\mathscr F}\subset\mathscr Y$.	For the reverse inclusion, both structures are closed under homotopy equivalences, so it suffices to consider an $\mathscr E_{\mathscr F}$-acyclic complex
				\[
				\overline{Y^{\mr}}\colon\quad
				A\xrightarrow{}Y^1\rightarrow\cdots
				\rightarrow Y^n\rightarrow C
				\]
				with all terms in $\mathscr C/[\mathscr F]$. Let $\eta\in\Ext^n_{\mathscr E_{\mathscr F}}(C,A)$ be its
				Yoneda class. By Step~$1$, choose
				$\xi\in\Ext^n_{\mathscr E}(C,A)$ with $\phi^n(\xi)=\eta$.
				By \cite[Proposition~3.17]{Kv},  $\mathscr C$
				is closed under $n$-extensions in $\mathscr A$.
				Thus $\xi$ has an $\mathscr E$-acyclic representative
				\[
				U^{\mr}\colon\quad A\rightarrow U^1\rightarrow\cdots
				\rightarrow U^n\rightarrow C
				\]
				with all terms in $\mathscr C$.
				Then $U^{\mr}\in\mathscr X$, and $\overline{U^{\mr}}$ and
				$\overline{Y^{\mr}}$ represent the same Yoneda class.
				Applying \cite[Lemma~3.15]{Kv} to the $n$-cluster tilting
				subcategory 
				$\mathscr C/[\mathscr F]$ gives a morphism
				$\overline{Y^{\mr}}\rightarrow\overline{U^{\mr}}$ which is the identity
				on both end terms. By $(\mathrm{EI})$ for
				$(\mathscr C/[\mathscr F],\mathscr Y)$, this is a homotopy
				equivalence. Then $\overline{Y^{\mr}}\in\mathscr X_{\mathscr F}$,
				and so $\mathscr Y=\mathscr X_{\mathscr F}$.				
		
		\emph{Step $3$}: Assume in addition that $(\mathscr A,\mathscr E)$ is Frobenius and $\mho^n_{\mathscr{E}}(\mathscr{C})\subset\mathscr{C}$. Let $\mathscr{P}$ be the subcategory of $\mathscr E$-projective-injectives.
		By Theorem~\ref{main thm2} with $n=1$,
		$(\mathscr A/[\mathscr F],\mathscr E_{\mathscr F})$ is a Frobenius exact category with projective-injectives
		$\mathscr P/[\mathscr F]$.
		For $C\in\mathscr C$, choose an
		$\mathscr E$-acyclic complex
		\[
		C\longrightarrow I^1\longrightarrow\cdots
		\longrightarrow I^n\longrightarrow\mho_{\mathscr E}^n(C).
		\] such that for each $k\in\{1,\dots,n\}$, $I^k\in \mathscr{P}$. 
		Applying $\pi_{\mathscr F}$ gives an
		$\mathscr E_{\mathscr F}$-acyclic complex with 
		middle terms belonging to $\mathscr P/[\mathscr F]$. Hence we may choose
		\[
		\mho_{\mathscr E_{\mathscr F}}^{n}(C)
		=\pi_{\mathscr F}(\mho_{\mathscr E}^{n}(C))\in\mathscr C/[\mathscr F].
		\]
		Theorem~\ref{n-exact from n-CT} gives the two
		Frobenius $n$-exact categories in the bottom row.
		\end{proof}

	\section{Geigle--Lenzing projective planes}
	\label{sec: Geigle--Lenzing projective planes}	
		In this section, we apply the ideal quotient construction to
		$2$-cluster tilting subcategories of $\ACM\X$, where $\X$
		is a Geigle--Lenzing projective plane.
		After recalling the basic notions and facts, we introduce
		Properties (L) and (R) and prove that every ACM bundle
		on $\X$ satisfies them.
		These properties yield quotient $2$-exact categories
		for both the natural and relative $2$-exact structures.
	
	\subsection{Definitions and basic properties} In this subsection, we recall some notions and facts about Geigle--Lenzing projective planes and arithmetically Cohen--Macaulay  bundles on them \cite{HIMO,CRW}. 
		Let $\mathbf{k}$ be an algebraically closed field. Denote by $D$ the $\mathbf{k}$-dual, that is  $D(-):=\Hom_{\mathbf{k}}(-,\mathbf{k})$.  Fix a quadruple $(p_1,\dots, p_4)$ with integers $p_i\ge 2$, called \emph{weights}. We consider the $\mathbf{k}$-algebra
		$$R=\mathbf{k}[X_1, \dots, X_4]/(\sum_{i=1}^{4}X_i^{p_i}).$$ Let $\L$ be the abelian group on generators $\x_1,\ldots,\x_4,\c$ subject to the relations $$p_1 \x_1= \dots=p_4 \x_4=\c.$$ The element $\c$ is called the \emph{canonical element} of $\L$. Each element $\x$ in $\L$ can be uniquely written in a \emph{normal form} as 
		\begin{align}\label{equ:nor}
			\x=\sum_{i=1}^4\lambda_i\x_i+\lambda\c
		\end{align}
		with $ 0\le \lambda_i < p_i$ and $\lambda\in\Z$. We can regard $R$ as an $\L$-graded $\mathbf{k}$-algebra by setting $\deg X_i:=\x_i$ for any $i$, hence $R=\bigoplus_{\x\in \L } R_{\x}$, where $R_{\x}$ denotes the homogeneous component of degree $\vec{x}$. Note that $R$ has a unique $\L$-graded maximal ideal $R_+=\bigoplus_{\x\in\L\setminus\{0\}}R_{\x}$.  We call $R$ an \emph{$\L$-graded Brieskorn--Pham} (BP) \emph{singularity of weight type $(p_1,\dots,p_4)$}.
		In the terminology of \cite{HIMO}, the pair $(R,\L)$ is called a
		\emph{Geigle--Lenzing hypersurface of dimension three}.  Note that $R$ has a unique $\L$-graded maximal ideal $R_+=\bigoplus_{\x\in\L\setminus\{0\}}R_{\x}$.
		
		Let $\L_+$ be the submonoid of $\L$ generated by $\x_1,\ldots,\x_4,\c$. Then we equip $\L$ with the structure of a partially ordered set: $\x\le \y$ if and only if $\y-\x\in\L_+$. Each element $\x$ of $\L$ satisfies exactly one of the two conditions
		\begin{align}\label{two possibilities of x}
			\x\geq 0 \text{\quad or\quad}  \x\leq 2\c+\vec{\omega},
		\end{align}
		where  $\w:=\c-\sum_{i=1}^4 \x_i$ is called the \emph{dualizing element} of $\L$. We have $R_{\x} \neq 0$ if and only if $\x \geq 0$ if and only if $\lambda \geq 0$ in its normal form in (\ref{equ:nor}). 
		
			Let $\mod^{\L}R$ denote the category of finitely generated
			$\L$-graded $R$-modules, and  $\CM^{\L}R$ denote its 
			subcategory of 	$\L$-graded maximal Cohen--Macaulay $R$-modules. 
		
		The category of \emph{coherent sheaves} on the
		\emph{Geigle--Lenzing} (\emph{GL}) \emph{projective plane} $\X$ is defined as the quotient category	
		\[ \coh \X :=\mod^{\L}R/\mod_0^{\L}R,\] 
		where $\mod^{\L}_0R$ denotes the  Serre subcategory of $\mod^{\L}R$ consisting of finite dimensional $R$-modules. The category $\coh\X$ is a Hom-finite abelian category of global dimension $2$; see \cite[Theorem 5.6]{HIMO} for $d=2$.		
		Denote by $\pi\colon \mod^{\L}R\to\coh\X$ the natural functor, also called \emph{sheafification}. The object $\OO:=\pi(R)$ is called the \emph{structure sheaf} of $\X$.  Each line bundle has the form $\OO(\x)$ for a uniquely determined $\x$ in $\L$, and  for any $\x,\y \in \L$ and $i\in\Z$,  
		\begin{equation}\label{extension spaces between line bundles}
			\Ext_{\X}^i(\OO(\x),\OO(\y))=\left\{
			\begin{array}{ll}
				R_{\y-\x}&\mbox{if $i=0$,}\\
				D(R_{\x-\y+\w})&\mbox{if $i=2$,}\\
				0&\mbox{otherwise.}
			\end{array}\right.
		\end{equation}
		The category $\coh\X$ satisfies Auslander--Reiten--Serre duality: for any $X,Y\in\coh\X$ and $0\le i\le2$, there are
		bifunctorial isomorphisms
		\begin{equation*}
			\Ext^{2-i}_{\X}(X,Y)\simeq D\Ext^i_{\X}(Y,X(\w)).
		\end{equation*}
		
		Let $\vect\X$ denote the full subcategory of $\coh\X$ consisting
		of vector bundles, and let $\mathscr{L}$ be the full subcategory
		of finite direct sums of line bundles $\OO(\x)$, with $\x\in\L$.
		The subcategory $\vect\X$ is extension-closed in $\coh\X$
		and hence inherits an exact structure from $\coh\X$.
		
			\begin{lemma}\label{kernels of epimorphisms in vect X}
			$\vect\X$ is closed under kernels of
			epimorphisms in $\coh\X$.
		\end{lemma}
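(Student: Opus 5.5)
The plan is to take an exact sequence $0\to K\to E\xrightarrow{p} F\to 0$ in $\coh\X$ with $E,F\in\vect\X$ and show $K\in\vect\X$. I would test $K$ against finite-length sheaves, the subcategory $\coh_0\X:=\pi(\mod^{\L}R_{\text{fl}})$, where $\mod^{\L}R_{\text{fl}}$ means the finite length objects of $\mod^{\L}R/\mod^{\L}_0R$. The criterion I intend to use is:
\[
X\in\vect\X \iff \Hom_{\X}(S,X)=0=\Ext^1_{\X}(S,X)\ \text{for all } S\in\coh_0\X .
\]
Granting this, the lemma reduces to a long exact sequence argument.

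\emph{Step 1 (the criterion).} Since $\coh\X$ is local in nature, $X$ is a vector bundle exactly when $X$ is locally free at each point. The local rings of $\X$ are regular of dimension $2$: on the $\L$-graded side these are the graded localizations $R_{(\mathfrak p)}$ at height-two graded primes, and they are regular because $R$ is a graded isolated singularity. By Auslander--Buchsbaum, a module over such a ring is free if and only if it has depth $2$. Depth at least $2$ at a closed point $x$ is equivalent to the vanishing of $\Hom$ and $\Ext^1$ from the simple sheaves supported at $x$, via local cohomology or Ischebeck's lemma. Every $S\in\coh_0\X$ has a finite filtration by such simple sheaves, so this gives the displayed criterion. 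Alternatively, by Auslander--Reiten--Serre duality,
\[
\Ext^i_{\X}(S,X)\simeq D\Ext^{2-i}_{\X}(X,S(\w)),
\]
so one could phrase the criterion using $\Ext^2(X,-)$ and $\Ext^1(X,-)$ into $\coh_0\X$. I expect this step, pinning down the criterion from the definitions in \cite{HIMO}, to be the main obstacle. I would first try to cite the corresponding description of $\vect\X$ in \cite{HIMO}, namely as the sheafifications of $\L$-graded modules that are maximal Cohen--Macaulay on the punctured spectrum and have depth $\ge 2$.

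\emph{Step 2 (long exact sequence).} Fix $S\in\coh_0\X$ and apply $\Hom_{\X}(S,-)$ to the sequence. This gives the exact sequence
\[
0\to\Hom(S,K)\to\Hom(S,E)\to\Hom(S,F)\to\Ext^1(S,K)\to\Ext^1(S,E).
\]
Since $E,F\in\vect\X$, Step 1 gives $\Hom(S,E)=\Hom(S,F)=0$ and $\Ext^1(S,E)=0$. Hence $\Hom(S,K)=0$, because it injects into $\Hom(S,E)=0$. Likewise $\Ext^1(S,K)=0$, because it sits between $\Hom(S,F)=0$ and $\Ext^1(S,E)=0$. By Step 1 again, $K\in\vect\X$.

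As a sanity check, there is a local version of the same argument: $F$ is locally free, so the sequence splits locally. Then $K$ is locally a direct summand of the locally free sheaf $E$, hence locally projective, hence locally free. This also explains why the hypothesis on $F$ is essential. Without it, for example with $F$ a skyscraper sheaf, $K$ need only be torsion-free, not locally free. In the argument above this shows up as the need for $\Hom(S,F)=0$ to kill $\Ext^1(S,K)$.
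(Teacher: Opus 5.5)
Your argument is correct in outline, but it takes a different route from the paper.

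\textbf{The paper's route.} The paper works at the module level. It lifts $f$ to $\widetilde f\colon M\to N$ in $\mod^{\L}R$ and notes that $\coker\widetilde f\in\mod_0^{\L}R$ vanishes after graded localization at every graded prime $\mathfrak p\neq R_+$. Hence $0\to(\ker\widetilde f)_{(\mathfrak p)}\to M_{(\mathfrak p)}\to N_{(\mathfrak p)}\to 0$ is exact, and it splits because $N_{(\mathfrak p)}\in\proj^{\L}R_{(\mathfrak p)}$. The paper then concludes with the local-freeness characterization of $\vect\X$ in \cite[Proposition~5.15(c)]{HIMO}. This is exactly your closing ``sanity check'', made rigorous, and it could have served as your whole proof.

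\textbf{Your route.} You replace this with the criterion of Step~1 plus a long exact sequence. Step~2 is fine, and it actually proves more. It only uses $\Hom_{\X}(S,F)=0$, so it shows that the kernel of an epimorphism from a vector bundle onto any sheaf with no nonzero finite-length subsheaf is a vector bundle. This is a depth-lemma statement that the local-splitting argument does not give.

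\textbf{What Step~1 still needs.} The cost is that Step~1 carries all the content and is only sketched. Even with the description of $\vect\X$ from \cite{HIMO} that you propose to cite, two ingredients are missing.
\begin{enumerate}
\item The regularity and the depth/Auslander--Buchsbaum argument must be run in $\mod^{\L}R_{(\mathfrak p)}$. The input is the $\L$-graded isolated-singularity property, not ordinary regularity of a local ring; the ungraded ring $R_{(\mathfrak p)}$ need not be regular.
\item More substantially, for $S$ supported at $\mathfrak p$ you must identify $\Hom_{\X}(S,X)$ and $\Ext^1_{\X}(S,X)$ with the corresponding local groups over $R_{(\mathfrak p)}$. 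These groups are computed in the Serre quotient $\mod^{\L}R/\mod_0^{\L}R$. This local-to-global comparison is standard for schemes but has to be established in this setting.
\end{enumerate}
As written, your route therefore needs all of the paper's inputs plus this extra nontrivial step.

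A minor point: your definition of $\coh_0\X$ is garbled. You mean the finite-length objects of $\coh\X$, that is, the images under $\pi$ of modules of Krull dimension at most one.
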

		
		\begin{proof}
			Let $f\colon V\to W$ be an epimorphism in $\coh\X$,
			where $V,W\in\vect\X$.
			By \cite[Lemmas~5.9 and~5.18]{HIMO}, we choose
			representatives $M,N\in\mod^{\L}R$ with $\pi(M)\simeq V$ and $\pi(N)\simeq W$, together with a morphism 
			$\widetilde f\colon M\to N$ such that  $\pi(\widetilde f)=f$ under these identifications. 			
			Put $T=\ker\widetilde f$.
			Since $\pi$ is exact and $f$ is an epimorphism, we have
			$\pi(T)\simeq\ker f$ and
			$\coker\widetilde f\in\mod_0^{\L}R$.  For any $\mathfrak p\in\operatorname{Spec}^{\mathbb L}R
			\setminus\{R_+\}$, we have $(\coker\widetilde f)_{(\mathfrak p)}=0$ by
			\cite[Lemma~4.7]{HIMO}. Hence localization  gives an
			exact sequence
			\[
			0\to T_{(\mathfrak p)}
			\to M_{(\mathfrak p)}
			\xrightarrow{\widetilde f_{(\mathfrak p)}}
			N_{(\mathfrak p)}\to0.
			\]
			By \cite[Proposition~5.15(c)]{HIMO}, $M$ and $N$
			are locally free on the punctured spectrum, that is,
			$
			M_{(\mathfrak p)}, N_{(\mathfrak p)}
			\in\proj^{\L}R_{(\mathfrak p)}
			$
			for any $\mathfrak p\ne R_+$.
		Thus this sequence splits, and hence $T_{(\mathfrak p)}\in\proj^{\L}R_{(\mathfrak p)}$ for any $\mathfrak p\ne R_+$, that is, $T$ is locally free on the punctured
		spectrum. Applying \cite[Proposition~5.15(c)]{HIMO} again gives
		$\ker f\simeq\pi(T)\in\vect\X$.
		\end{proof}

		The sheafification  $\pi:\mod^{\L}R\to\coh\X$ restricts to a fully faithful 
		functor $\CM^{\L}R\to\vect\X$ and further induces an equivalence $$\CM^{\L}R \xrightarrow{\sim} \pi(\CM^{\L} R)=:\ACM\X\subset\vect\X.$$
		The objects of $\ACM\X$ are called
		\emph{arithmetically Cohen--Macaulay (ACM) bundles}.
		The following gives a description of $\ACM \X$ in terms of vector bundles by \cite{HIMO}.
		\begin{align}\label{ACM bundle eq.}
			\ACM\X
			&=\{X\in\vect\X\mid
			\Ext^1_{\X}(P,X)=0
			\text{ for any }P\in\mathscr{L}\}\\
			&=\{X\in\vect\X\mid
			\Ext^1_{\X}(X,P)=0
			\text{ for any }P\in\mathscr{L}\}.
			\nonumber
		\end{align}
		Notice that all line bundles serve as the indecomposable projective-injective objects in $\ACM \X$ by (\ref{ACM bundle eq.}). Moreover, $\pi$ restricts to an equivalence $\proj^{\L}R\xrightarrow{\sim}\mathscr{L}$ with the $\L$-graded  indecomposable projective module $R(\x)$ corresponding to the line bundle $\OO(\x)$. Therefore, this turns $\ACM \X$ into a Frobenius category  with the exact structure $\mathscr{E}$ inherited  from $\coh\X$, whose  $\mathscr{E}$-projectives are precisely $\mathscr{L}$. Its stable category $\underline{\ACM}\,\X$ is thus triangle equivalent to $\underline{\CM}^{\L} R$.

			Recall that the \emph{dominant element} of $\L$ is
			\[
			\vdelta := 2\c+2\w=\sum_{i=1}^{4}(p_i-2)\x_i.
			\]
			Note that an element $\x\in\L$ (in the normal form) satisfies $0\leq\x\leq\vdelta$ if and only if
			$\x=\sum_{i=1}^{4}\lambda_i\x_i$ with
			$ 0\le\lambda_i\le p_i-2$ for $1\le i\le4$. 
			
			Let $L$ be a line bundle and $\x\in \L$ with $0\leq\x\leq\vdelta$. Set
			\begin{align*}
				L\langle \x \rangle:=\bigoplus_{1\le i \le 4}L(\x-(1+\lambda_i)\x_i).
			\end{align*}			
			Consider the short exact sequence
			\begin{align*}
				\eta_{1}:\quad  0 \to \ker\gamma \xrightarrow{} L\langle \x \rangle \xrightarrow{\gamma} L(\x) \to 0
			\end{align*}
			in $\coh \X$, where $\gamma:=(X_i^{\lambda_i+1})_{1\le i\le 4}$. 
			By applying $\Hom(-,L(\w))$ to $\eta_1$, we have
			$\Ext^1_{\X}(\ker\gamma,L(\w))\simeq\mathbf{k}$, and thus 
		there exists a nonsplit short exact sequence
		\begin{align*}
			\eta_2: \quad
			0\to L(\w)\xrightarrow{\alpha}E\to \ker\gamma \to0.
		\end{align*} Splicing these two sequences $\eta_{1}$ with $\eta_{2}$ we finally obtain the exact sequence
			\begin{align}
				\eta:& \quad	0 \to L(\w) \xrightarrow{\alpha}  E 
				\xrightarrow{\beta} 
				L\langle \x \rangle \xrightarrow{\gamma} L(\x) \to 0, \label{2-extension bundle exact} 
			\end{align}
			in $\coh \X$. In this case,
			the term $E_L \langle \x \rangle:=E$, determined up to isomorphism, is called the \emph{$2$-extension bundle} given by the data $(L,\x)$. If $L=\OO$, then we just write $E\langle \x \rangle$. For $\x=0$, the term $E_L:=E_L\langle 0 \rangle$ is called the \emph{$2$-Auslander bundle} associated with $L$. Note that $\ker \gamma\in \vect \X$ by Lemma \ref{kernels of epimorphisms in vect X}. 		 Each $2$-extension bundle is an indecomposable ACM bundle \cite{CRW}. 
			Recall from \cite[Proposition 3.8(b)]{CRW} that for any  $0\leq\x\leq\vdelta$ and $\z \in \L$, we have $E_L\langle\x \rangle(\z)\simeq E_{L(\z)}\langle\x \rangle$.

			\begin{proposition}\cite[Proposition 4.6]{CRW}\label{susp.ext.} 
			Let $L$ be a line bundle and $\x=\sum_{i=1}^4\lambda_i\x_i$ with $0\leq\x\leq\vdelta$. 
			Then for any $1\le i\le 4$, we have 
			\begin{align}\label{susp.}
				E_{L} \langle \x \rangle[1] = E_L \langle (p_i-2-\lambda_i)\x_i+\sum_{j\neq i}\lambda_j\x_j \rangle ((1+\lambda_i)\x_i).
			\end{align}
			In particular, we have $E_{L} \langle \x \rangle[2]=E_{L} \langle \x \rangle(\c)$ and $E_{L} \langle \x \rangle=E_{L} \langle \vdelta-\x \rangle(\x-\w-\c)$.  			
		\end{proposition}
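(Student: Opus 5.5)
We would prove the formula \eqref{susp.} by passing to $\L$-graded matrix factorizations, where the suspension can be computed explicitly.

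\emph{Setup.} By the triangle equivalence $\underline{\ACM}\,\X\simeq\underline{\CM}^{\L}R$ recalled above, it suffices to work in $\underline{\CM}^{\L}R$. By Buchweitz/Orlov, $\underline{\CM}^{\L}R$ is equivalent to the homotopy category of $\L$-graded matrix factorizations of $f=\sum_{i=1}^4X_i^{p_i}$. In that category $[1]$ swaps the two maps of a factorization and twists one component by $\c=\deg f$. For each $i$ and $0\le\lambda_i\le p_i-2$, consider the rank-one graded factorization
\[
\mathbf{m}_i(\lambda_i)\colon\quad R(-(1+\lambda_i)\x_i)\xrightarrow{X_i^{1+\lambda_i}}R\xrightarrow{X_i^{p_i-1-\lambda_i}}R(\c-(1+\lambda_i)\x_i)
\]
of $X_i^{p_i}$.

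\emph{Step 1 (the key identification).} We would show that, up to a degree twist depending only on $L$ and $\x$, the graded Cohen--Macaulay module corresponding to $E_L\langle\x\rangle$ is the cokernel of the tensor product
\[
\mathbf{m}_1(\lambda_1)\otimes\cdots\otimes\mathbf{m}_4(\lambda_4).
\]
To do this, we would compare the first terms of the associated resolution with $\eta$ in \eqref{2-extension bundle exact}. The degree-$(-1)$ part of the Koszul-type complex of the tensor product involves the maps $X_i^{\lambda_i+1}$, i.e.\ $\gamma$, with target $L(\x)$ and source $L\langle\x\rangle$. The syzygy two steps back is then an indecomposable non-projective ACM bundle $E'$ fitting into a sequence $0\to L(\w)\to E'\to\ker\gamma\to0$. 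To see that this sequence is non-split, we would check that the class it defines is the generator of $\Ext^1_{\X}(\ker\gamma,L(\w))\simeq\mathbf{k}$. Uniqueness of $\eta_2$ up to isomorphism then gives $E'\simeq E_L\langle\x\rangle$. Alternatively, one can use that both objects are indecomposable and have the same image in the Grothendieck group together with the same top line bundles; we would try the $\Ext^1$-generator argument first.

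\emph{Step 2 (suspension).} In the matrix factorization category, $(\mathbf{M}\otimes\mathbf{N})[1]\simeq\mathbf{M}[1]\otimes\mathbf{N}$. Hence $[1]$ may be applied to the single factor $\mathbf{m}_i(\lambda_i)$, leaving the other three unchanged. Swapping the two maps of $\mathbf{m}_i(\lambda_i)$ gives $\mathbf{m}_i(p_i-2-\lambda_i)$ twisted by $(1+\lambda_i)\x_i$. Feeding this back into Step 1 yields exactly \eqref{susp.}. The main work here is careful degree bookkeeping: one has to check that the normalization twist in Step 1 changes by precisely $(1+\lambda_i)\x_i$.

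\emph{Step 3 (the consequences).}
\begin{itemize}
\item Applying \eqref{susp.} twice with the same $i$ returns $\lambda_i$. The total twist is $(1+\lambda_i)\x_i+(p_i-1-\lambda_i)\x_i=p_i\x_i=\c$, so $E_L\langle\x\rangle[2]=E_L\langle\x\rangle(\c)$.
\item Applying \eqref{susp.} once for each $i=1,\dots,4$ replaces $\x$ by $\vdelta-\x$, with total twist $\sum_i(1+\lambda_i)\x_i=\x+\c-\w$.
\item Combining the two, $E_L\langle\x\rangle[4]=E_L\langle\x\rangle(2\c)$, and comparing gives $E_L\langle\x\rangle=E_L\langle\vdelta-\x\rangle(\x-\w-\c)$.
\end{itemize}

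The main obstacle is Step 1, namely matching the geometric definition via $\eta$ with the tensor-product matrix factorization, including the precise degree normalization.
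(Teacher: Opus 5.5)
\paragraph{Verdict.} The paper does not prove this proposition; it quotes it from \cite{CRW}. Judging from Example~\ref{Example 1}, there it is read off from the explicit injective hulls of \cite[Theorem~4.4]{CRW}, so your matrix-factorization route is a different one. Step~3 is fine and the strategy can be made to work. But there is a genuine gap at the crux, Step~1: it is not established, and the mechanism you sketch for it does not work.

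\paragraph{Why Step~1 fails as sketched.} The Koszul factorization $\bigotimes_i\mathbf m_i(\lambda_i)$ should be taken over $\mathbf k[X_1,\dots,X_4]$, not $R$. It is the stabilization of the finite-length module $N_{\x}:=R/(X_1^{1+\lambda_1},\dots,X_4^{1+\lambda_4})$. The minimal graded resolution of $N_{\x}$ has $F_1=\bigoplus_iR(-(1+\lambda_i)\x_i)$, which gives $\gamma$. However, $F_2=\bigoplus_{i<j}R(-(1+\lambda_i)\x_i-(1+\lambda_j)\x_j)\oplus R(-\c)$ is free of rank $7$. Consequently:
\begin{itemize}
\item $\Omega^2N_{\x}$, whose sheafification is $\ker\gamma$ up to twist, has depth $2$ and is not ACM;
\item $\Omega^3N_{\x}$ sits in $0\to\Omega^3N_{\x}\to F_2\to\Omega^2N_{\x}\to0$.
\end{itemize}
So nothing in the resolution is a sequence $0\to L(\w)\to E'\to\ker\gamma\to0$. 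The sequence $\eta_2$ is a maximal Cohen--Macaulay \emph{approximation} of $\ker\gamma$, not a syzygy sequence. Comparing the two in the singularity category is the missing idea. Your fallback (indecomposable, same class in $K_0$, same end terms) is not a valid isomorphism criterion.

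\paragraph{Closing Step~1.} Write $L=\OO(\vec\ell)$ and apply $\Gamma_*=\bigoplus_{\z\in\L}\Hom_{\X}(\OO(-\z),-)$ to $\eta$ in \eqref{2-extension bundle exact}. Line bundles have no $\Ext^1$ by \eqref{extension spaces between line bundles}, so this gives an exact sequence
\[
0\to R(\vec\ell+\w)\to\Gamma_*E\to\bigoplus_{i}R(\vec\ell+\x-(1+\lambda_i)\x_i)\to R(\vec\ell+\x)\to N_{\x}(\vec\ell+\x)\to0
\]
with $\Gamma_*E$ maximal Cohen--Macaulay. Hence $E_L\langle\x\rangle\simeq N_{\x}(\vec\ell+\x)[-2]$ in $\underline{\CM}^{\L}R\simeq D_{\mathrm{sg}}(\mod^{\L}R)$. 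This is your Step~1, with normalization twist $\vec\ell+\x$.

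\paragraph{Correcting Step~2.} With your normalization, swapping the maps of $\mathbf m_i(\lambda_i)$ gives $\mathbf m_i(p_i-2-\lambda_i)$ twisted by $(p_i-1-\lambda_i)\x_i$, not by $(1+\lambda_i)\x_i$. You can see this without tensor products. Let $\x'$ be $\x$ with $\lambda_i$ replaced by $p_i-2-\lambda_i$, and consider the exact sequence
\[
0\to N_{\x}(-(p_i-1-\lambda_i)\x_i)\xrightarrow{X_i^{p_i-1-\lambda_i}}R/(X_j^{1+\lambda_j}\mid j\ne i)\to N_{\x'}\to0.
\]
Its middle term is perfect, since the $X_j^{1+\lambda_j}$ with $j\neq i$ form an $R$-regular sequence. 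It therefore gives $N_{\x}[1]\simeq N_{\x'}((p_i-1-\lambda_i)\x_i)$. The normalization twist changes by $\x'-\x=(p_i-2-2\lambda_i)\x_i$. The net twist is then $(p_i-1-\lambda_i)\x_i-(p_i-2-2\lambda_i)\x_i=(1+\lambda_i)\x_i$, which is \eqref{susp.}.
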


		\subsection{Properties (L) and (R)} In this subsection, we show that each ACM bundle on $\X$
		satisfies Properties (L) and (R). These properties will be used in the next subsection. 
		Each line bundle $L$ has a unique expression
		$$L=\OO(\sum_{i=1}^{3}\lambda_i\x_i+a\x_4)$$
		such that $0\le \lambda_i <p_i$ for $1\le i \le 3$, and $a\in \Z$. We call $L$ \emph{persistent} if $\lambda_i\le p_i-2$ for $1\le i \le 3$, and \emph{fading} otherwise. Let $\mathscr{F}\subset\mathscr{L}$ denote the closure of the fading line bundles under finite direct sums and direct summands.

		\begin{definition} \label{Properties (L) and (R)}
			Let $X\in \ACM \X$. 
			\begin{itemize}
				\item[(a)]
			We say that $X$ \emph{has Property (L)} if there exists an exact sequence
			\begin{equation}\label{L equ.}\tag{L}
				0\to X\xrightarrow{f^0}F^1\xrightarrow{f^1}F^2
				\xrightarrow{f^2}F^3\to0
			\end{equation}
			in $\coh \X$,
			where each $F^{i}\in \mathscr{F}$ and $f^0$ is a left $\mathscr{F}$-approximation of $X$. 
			\item[(b)] Dually, 	we say that $X$ \emph{has Property (R)} if there exists an exact sequence
			\begin{equation}\label{R equ.}\tag{R}
				0\to F_3\xrightarrow{f_2}F_2\xrightarrow{f_1}F_1
				\xrightarrow{f_0}X\to0
			\end{equation}
			in $\coh \X$, where each $F_{i}\in \mathscr{F}$ and $f_0$ is a right $\mathscr{F}$-approximation of $X$. 
			\end{itemize} 
		\end{definition}
	
			\begin{lemma} \label{lemma 4.3}
			Let $X\in\ACM\X$ satisfy Property~\textup{(L)}, and choose a
			sequence \eqref{L equ.}. Put $K=\operatorname{Im}f^1$. Then the sequence \eqref{L equ.} is right $2$-exact in $\ACM\X$ and  $\Ext^1_{\X}(K,F)=0$ for any $F\in\mathscr F$.	
					\end{lemma}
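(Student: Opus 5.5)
The plan is to split \eqref{L equ.} into the two short exact sequences
\[
0\to X\xrightarrow{f^0}F^1\to K\to 0,\qquad 0\to K\to F^2\xrightarrow{f^2}F^3\to 0
\]
in $\coh\X$, where $f^1$ factors as $F^1\twoheadrightarrow K\hookrightarrow F^2$. First I check that $K\in\vect\X$. This follows from Lemma~\ref{kernels of epimorphisms in vect X}, since $K$ is the kernel of the epimorphism $f^2$ between vector bundles. I do not need $K$ to be ACM.

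Next I prove the vanishing $\Ext^1_{\X}(K,F)=0$ for $F\in\mathscr F$. I apply $\Hom_{\X}(-,F)$ to the first short exact sequence. This gives an exact sequence
\[
\Hom(F^1,F)\xrightarrow{(f^0)^*}\Hom(X,F)\to\Ext^1(K,F)\to\Ext^1(F^1,F).
\]
The last term vanishes by \eqref{extension spaces between line bundles}, because $F^1$ and $F$ are sums of line bundles. The map $(f^0)^*$ is surjective because $f^0$ is a left $\mathscr F$-approximation. Hence $\Ext^1_{\X}(K,F)=0$.

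For right $2$-exactness in $\ACM\X$, I take an arbitrary $Y\in\ACM\X$. I must show exactness of
\[
0\to\Hom(F^3,Y)\to\Hom(F^2,Y)\to\Hom(F^1,Y)\to\Hom(X,Y).
\]
Injectivity at $\Hom(F^3,Y)$ holds because $f^2$ is an epimorphism in $\coh\X$.

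Exactness at $\Hom(F^2,Y)$ comes from left exactness of $\Hom(-,Y)$ on the second short sequence. If $g f^1=0$, then $g$ kills $K$, since $F^1\to K$ is epi. So $g$ factors through $\coker(K\to F^2)=F^3$.

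Exactness at $\Hom(F^1,Y)$ needs care. Let $h\colon F^1\to Y$ satisfy $hf^0=0$. Then $h$ factors through $F^1\to K$ as $h=h'\circ(F^1\to K)$ for some $h'\colon K\to Y$. I now need to extend $h'$ along $K\hookrightarrow F^2$. The obstruction lies in $\Ext^1_{\X}(F^3,Y)$, by the long exact sequence of $\Hom(-,Y)$ applied to the second sequence. This group vanishes by \eqref{ACM bundle eq.}, because $Y$ is ACM and $F^3\in\mathscr L$. So $h'$ extends to $F^2$, and hence $h$ factors through $f^1$.

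The main subtlety is this last step. I must use the ACM characterization \eqref{ACM bundle eq.} for the test object $Y$, rather than any property of $K$. The remaining steps are routine diagram chases.
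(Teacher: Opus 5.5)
Your proposal is correct and follows essentially the same route as the paper. Both arguments split \eqref{L equ.} at $K$, extend $K\to Y$ along $K\hookrightarrow F^2$ using $\Ext^1_{\X}(F^3,Y)=0$ from \eqref{ACM bundle eq.}, and obtain $\Ext^1_{\X}(K,F)=0$ from the surjectivity of $(f^0)^\ast$; the only difference is that you spell out the exactness at $\Hom(F^3,Y)$ and $\Hom(F^2,Y)$ and check $K\in\vect\X$, which the paper treats as immediate or does not need.
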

				
				\begin{proof}		To show the first assertion, it suffices to show that 	$f^1$ is a weak
					cokernel of $f^0$.	
 			Let $F^1\xrightarrow{p}K\xrightarrow{i}F^2$ be the canonical factorization of $f^1$. 	
			For each $T\in\ACM\X$, we have
			$\Ext^1_{\X}(F^3,T)=0$ by \eqref{ACM bundle eq.}. Applying
			$\Hom_{\X}(-,T)$ to the exact sequence
			\[
			0\to K\xrightarrow{i}F^2\xrightarrow{f^2}F^3\to0
			\]
			shows that every morphism $K\to T$ extends to $F^2$.
			Thus, if $h:F^1\to T$ satisfies $hf^0=0$, then
			$h$ factors through $p$, and hence through $ip=f^1$. Thus $f^1$ is a weak
			cokernel of $f^0$.
			
			For any $F\in\mathscr F$, applying $\Hom(-,F)$ to the 
			exact sequence
			$0\to X\xrightarrow{f^0}F^1\xrightarrow{p} K\to0,$			
			we obtain an exact sequence
						$$\Hom_{\X}(F^1,F)\xrightarrow{(f^0)^\ast} \Hom_{\X}(X,F) \to \Ext^1_{\X}(K,F) \to \Ext^1_{\X}(F^1,F)=0.$$
					Since $f^0$ is a left $\mathscr F$-approximation, $(f^0)^\ast$ is surjective, yielding  $\Ext^1_{\X}(K,F)$=0.
			\end{proof}

	\begin{lemma} \label{lem:lines} Each line bundle has Property (L).
	\end{lemma}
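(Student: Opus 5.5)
We argue separately for fading and persistent line bundles; in the persistent case the sequence \eqref{L equ.} will be a sheafified graded Koszul complex.

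\emph{Fading case.} If $L$ is fading, then $L\in\mathscr F$. We take $F^1=L$, $f^0=1_L$ and $F^2=F^3=0$. The identity is trivially a left $\mathscr F$-approximation, and $0\to L\to L\to 0\to 0\to 0$ is exact. So assume from now on that $L=\OO(\z)$ with $\z=\sum_{i=1}^3\lambda_i\x_i+a\x_4$ and $0\le\lambda_i\le p_i-2$ for $1\le i\le3$.

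\emph{Persistent case: the sequence.} Put $m_i:=p_i-1-\lambda_i$ for $i=1,2,3$, so that $1\le m_i\le p_i-1$. The elements $X_1,X_2,X_3$ form a homogeneous system of parameters of the three-dimensional Cohen--Macaulay ring $R$, since $R/(X_1,X_2,X_3)\simeq\mathbf k[X_4]/(X_4^{p_4})$ is finite dimensional. Hence $X_1^{m_1},X_2^{m_2},X_3^{m_3}$ is an $R$-regular sequence. Its graded Koszul complex, twisted by $\z$, is
\[
0\to R(\z)\to\bigoplus_{i}R(\z+m_i\x_i)\to\bigoplus_{i<j}R(\z+m_i\x_i+m_j\x_j)\to R(\z+\textstyle\sum_i m_i\x_i)\to0 .
\]
It is exact except at the right end, where the cokernel is $R/(X_1^{m_1},X_2^{m_2},X_3^{m_3})(\z+\sum_i m_i\x_i)$. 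This cokernel is finite dimensional, again because $R/(X_1,X_2,X_3)$ is. Since the sheafification $\pi$ is exact and kills $\mod_0^{\L}R$, applying $\pi$ gives an exact sequence
\[
0\to L\xrightarrow{f^0}F^1\xrightarrow{f^1}F^2\xrightarrow{f^2}F^3\to0
\]
in $\coh\X$, where $f^0=(X_i^{m_i})_{1\le i\le 3}$. Every summand of $F^1,F^2,F^3$ is a line bundle $L(\sum_{i\in S}m_i\x_i)$ with $\emptyset\ne S\subset\{1,2,3\}$. For $i\in S$ the coefficient of $\x_i$ becomes $\lambda_i+m_i=p_i-1$, with no carry into $\c$. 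So each such summand is fading, and $F^1,F^2,F^3\in\mathscr F$.

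\emph{Persistent case: the approximation property.} It remains to show that $f^0$ is a left $\mathscr F$-approximation. By additivity it suffices to factor a morphism $L\to\OO(\y)$ with $\OO(\y)$ fading, say with $\x_j$-coefficient $p_j-1$ for some $j\le3$. We have $\Hom_{\X}(L,\OO(\y))=R_{\y-\z}$ by \eqref{extension spaces between line bundles}, and this space is spanned by images of monomials $X_1^{a_1}\cdots X_4^{a_4}$ with $\sum_i a_i\x_i=\y-\z$. Comparing normal forms of $\x_j$-coefficients modulo $p_j$ gives
\[
a_j\equiv (p_j-1)-\lambda_j=m_j \pmod{p_j}.
\]
Since $a_j\ge0$ and $1\le m_j\le p_j-1$, this forces $a_j\ge m_j$. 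Hence every such monomial, and so every element of $R_{\y-\z}$, lies in $X_j^{m_j}R$. Therefore every morphism $L\to\OO(\y)$ factors through the component $X_j^{m_j}\colon L\to L(m_j\x_j)$ of $f^0$, as required.

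The main point needing care is this last congruence argument, namely that no carry into $\c$ can spoil the inequality $a_j\ge m_j$. It works precisely because $m_j\ge1$ uses persistence, $\lambda_j\le p_j-2$. The remaining ingredients (regularity of $X_1^{m_1},X_2^{m_2},X_3^{m_3}$ and exactness of $\pi$ on the Koszul complex) are standard.
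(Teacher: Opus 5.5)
Your proof is correct and follows the paper's argument: the trivial sequence in the fading case, and in the persistent case the sheafified Koszul complex of $X_1^{m_1},X_2^{m_2},X_3^{m_3}$ (your $m_i$ are the paper's $q_i$), whose last three terms are fading. Your monomial congruence $a_j\equiv m_j \pmod{p_j}$ simply spells out the paper's terse step that $\y\ge\x$ forces $\y\ge\x+q_j\x_j$, so that $\Hom(L,L')=X_j^{q_j}\Hom(L(q_j\x_j),L')$. One small correction to your closing remark: $a_j\ge m_j$ holds for any $0\le m_j<p_j$, so persistence is needed to make the terms $F^i$ fading, not for this inequality.
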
	
	\begin{proof}
		Let $L$ be a line bundle. If $L\in \mathscr{F}$, then one can take the exact sequence $$0 \to L \xrightarrow{1} L \to 0\to 0\to 0.$$ In this case, the assertion is trivial. Otherwise we let $L=\OO(\x)$, where  $\x=\sum_{i=1}^{3}\lambda_i\x_i+a\x_4$ with $0\le \lambda_i\le p_i-2$ and $a\in \Z$.   Set $q_i=p_i-1-\lambda_i$ for $1\le i\le 3$. Since $(X_i)_{1\le i \le 3}$ is an $R$-regular sequence, so is $(X_i^{q_i})_{ 1\le i \le 3}$. Hence the corresponding Koszul complex
		\begin{align} \label{Koszul complex of R}
			0\to R(-\sum_{ 1\le i \le 3} q_i\x_i)\to  \bigoplus_{1\le i<j \le 3}  R(-q_i\x_i-q_j\x_j) \to  \bigoplus_{ 1\le i \le 3}  R(-q_i\x_i)\to R \to 0
		\end{align}
		of $R$ is exact except in the rightmost position whose cohomology is $R/(X_i^{q_i}\mid 1\le i \le 3)$. Since this belongs to $\mod_0^{\L} R$, the image of (\ref{Koszul complex of R}) in $\coh \X$ is exact, and then applying degree shift by $\sum_{i=1}^3 (p_i-1)\x_i+a\x_4$, we obtain an exact sequence 	
		\begin{align} \label{Koszul complex L}
			0 \to L \xrightarrow{f^0} \bigoplus_{ 1\le i \le 3} L(q_i\x_i) \xrightarrow{} \bigoplus_{ 1\le i<j \le 3} 
		L(q_i\x_i+q_j\x_j)\xrightarrow{} L(\sum_{ 1\le i \le 3} q_i\x_i)\to 0.
		\end{align} The last three terms belong to $\mathscr{F}$. 	Now let $L'=\OO(\y)\in \mathscr{F}$ and $h:L\to L'$ be a morphism. Without loss of generality, we may assume $h\neq 0$. This implies that $\y\ge \x$ and thus $\y \ge \x+q_j\x_j$ for some $1\le j \le 3$ since $\OO(\y)\in \mathscr{F}$. Then we have 
		$$h\in \Hom(L,L')=X_j^{q_j}\Hom(L(q_j\x_j),L').$$
		In particular $h$ factors through $f^0$. This proves the assertion.
	\end{proof}
		
		 Let $\mathscr{H}$ be the subcategory of $\ACM \X$ consisting of objects with Property  (L).
		
	\begin{lemma} \label{H closed}
		 The subcategory $\mathscr{H}$ contains $\mathscr{L}$ and is closed under stable isomorphisms, extensions, finite direct sums and direct summands. Its image $\underline{\mathscr{H}}$ in $\underline{\ACM}\,\X$ is closed under isomorphisms, extensions, finite direct sums and direct summands.	 	
	\end{lemma}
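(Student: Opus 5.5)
We take the claims in order: $\mathscr L\subset\mathscr H$, then finite direct sums, extensions, direct summands, stable isomorphisms, and finally the statement about $\underline{\mathscr H}$.

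\emph{Line bundles and sums.} The inclusion $\mathscr L\subset\mathscr H$ reduces to direct sums, because each line bundle lies in $\mathscr H$ by Lemma~\ref{lem:lines}. For sums, a direct sum of sequences \eqref{L equ.} is again exact with terms in $\mathscr F$. A direct sum of left $\mathscr F$-approximations is a left $\mathscr F$-approximation. So $\mathscr H$ is closed under finite direct sums, and $\mathscr L\subset\mathscr H$ follows.

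\emph{Extensions.} Let $0\to X\to Y\to Z\to 0$ be exact with $X,Z\in\mathscr H$, and fix sequences \eqref{L equ.} for $X$ and $Z$. Put $K_X=\Im f^1_X$.
\begin{itemize}
\item First extend $f^0_X$ along $X\to Y$. The obstruction lies in $\Ext^1_{\X}(Z,F^1_X)$, which vanishes by \eqref{ACM bundle eq.} since $F^1_X$ is a line-bundle sum.
\item The horseshoe-type construction then gives $Y\to F^1_X\oplus F^1_Z$ with cokernel an extension of $\coker f^0_Z$ by $\coker f^0_X=K_X$.
\item Iterate at the next two steps, using $\Ext^1_{\X}(\text{ACM or }K,\ \mathscr F)$-vanishing. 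Lemma~\ref{lemma 4.3} supplies $\Ext^1_{\X}(K,F)=0$. At the last step, $\Ext^1_{\X}(F^3,F)=0$ holds because both are line-bundle sums.
\item Alternatively, run a horseshoe argument on the cokernels: by that vanishing, the middle cokernel is an extension that splits appropriately.
\end{itemize}
The resulting map $f^0_Y\colon Y\to F^1_X\oplus F^1_Z$ is a left $\mathscr F$-approximation. Given $Y\to F$, restrict it to $X$ and factor through $f^0_X$. The difference then factors through $Z$, hence through $f^0_Z$. Here we again use the vanishing of $\Ext^1_{\X}(Z,F)$ to adjust the factorization.

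\emph{Summands and stable isomorphisms.}
\begin{itemize}
\item \emph{Direct summands.} Let $X\oplus X'\in\mathscr H$. We aim to produce \eqref{L equ.} for $X$ as follows. Take a minimal left $\mathscr F$-approximation of $X$; it exists since $\coh\X$ is Hom-finite and Krull--Schmidt. Take its cokernel $K$. Then show $K$ admits a two-term resolution $0\to K\to F^2\to F^3\to 0$ by $\mathscr F$. This holds because the corresponding cokernel for $X\oplus X'$ is, up to an $\mathscr F$-summand, $K\oplus K'$, and the property of having such a coresolution with $\Ext^1_{\X}(K,\mathscr F)=0$ passes to summands. Here we use that a left $\mathscr F$-approximation of $K$ which is an epimorphism-free monomorphism has kernel in $\vect\X$ (Lemma~\ref{kernels of epimorphisms in vect X}), together with a rank/summand argument.
\item \emph{Stable isomorphisms.} If $X$ and $Y$ are stably isomorphic, then $X\oplus P\simeq Y\oplus Q$ with $P,Q\in\mathscr L$. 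The previous two points then give the claim.
\end{itemize}

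\emph{The image $\underline{\mathscr H}$.} Closure under isomorphisms follows from closure under stable isomorphisms. Sums and summands follow from the corresponding facts in $\mathscr H$, since stable summands lift up to projective summands in the Krull--Schmidt setting. For triangle extensions, note that every triangle $X\to Y\to Z\to X[1]$ in $\underline{\ACM}\,\X$ is isomorphic to the image of a conflation $0\to X\to Y'\to Z\to0$ with $Y'\simeq Y$ stably. So $Y\in\underline{\mathscr H}$.

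The main obstacle is closure under direct summands: Property (L) is not obviously inherited by summands. I would first try the minimal-approximation argument above, checking that the cokernel of a minimal approximation of $X$ is a summand of that for $X\oplus X'$ modulo $\mathscr F$.
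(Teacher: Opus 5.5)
Your arguments for finite sums, extensions, stable isomorphisms and the stable image follow the paper's proof. For extensions this means the horseshoe construction using $\Ext^1_{\X}(Z,F^1_X)=0$, Lemma~\ref{lemma 4.3}, and the splitting of the final extension of line-bundle sums. Stable isomorphisms and retracts are then reduced to sums and summands. One small point: checking that $Y\to F^1_X\oplus F^1_Z$ is a left $\mathscr F$-approximation needs no further $\Ext$-vanishing. Once $f^0_X$ is extended to $u\colon Y\to F^1_X$ and $h|_X=tf^0_X$, the map $h-tu$ vanishes on $X$ and factors through $Y\to Z$ by the cokernel property.

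The genuine gap is closure under direct summands, and your stable-isomorphism and $\underline{\mathscr H}$ arguments depend on that step. You split the first stage correctly, $f=\mathrm{diag}(a,a',0)$ with $a,a'$ minimal. You then assert that ``the property of having such a coresolution passes to summands'', which is exactly what has to be proved. The justification you offer does not bear on it. Lemma~\ref{kernels of epimorphisms in vect X} and a rank count say nothing about whether the cokernel of a monomorphism $K\to B$ lies in $\mathscr F$, which is the real issue.

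The missing idea is to apply the minimal-approximation splitting a second time. The induced monomorphism $j\colon K\oplus K'\oplus Q\to F_1$ is itself a left $\mathscr F$-approximation, because its cokernel $F_2$ satisfies $\Ext^1_{\X}(F_2,F)=0$ for every $F\in\mathscr F$ by \eqref{extension spaces between line bundles}. Take minimal left $\mathscr F$-approximations $b\colon K\to B$ and $b'\colon K'\to B'$. Then $b\oplus b'\oplus 1_Q$ is minimal, since direct sums of left minimal maps are left minimal in a Hom-finite Krull--Schmidt category. So you may identify $F_1\simeq B\oplus B'\oplus Q\oplus Q'$ with $j=\mathrm{diag}(b,b',1_Q,0)$. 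Hence $b$ is a monomorphism and $F_2\simeq\coker b\oplus\coker b'\oplus Q'$. Therefore $\coker b\in\mathscr F$, because $\mathscr F$ is closed under direct summands. Splicing $0\to X\xrightarrow{a}A\to K\to 0$ with $0\to K\xrightarrow{b}B\to\coker b\to 0$ gives Property (L) for $X$.

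You should also say why a left $\mathscr F$-approximation of $X$ exists at all: restrict $f$, which matters since $\mathscr F$ has infinitely many indecomposables. And you should say why $a$ is a monomorphism: it is a block of the monomorphism $f$.
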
	
		\begin{proof} The proof is divided into the following four steps.
			
			\emph{Step $1$}: We show  that $\mathscr H$ contains $\mathscr L$ and is closed under extensions and finite direct sums.	 			
			Let $
			0\to E'\xrightarrow{i}E\xrightarrow{p}E''
			\rightarrow0$
			be an exact sequence in $\ACM \X$ with $E',E''\in\mathscr{H}$.
			Choose sequences \eqref{L equ.} for $E'$ and $E''$, and
			write their two short exact pieces as
			\[
			0\to E^{\ast}\xrightarrow{a^{\ast}}
			F_0^{\ast}\to K^{\ast}\to0
			\quad \text{and} \quad
			0\to K^{\ast}\xrightarrow{b^\ast} F_1^{\ast}
			\to F_2^{\ast}\to0			
			\] in $\coh \X$, where $\ast\in\{',\,''\}$.
			 Since $\Ext_{\X}^1(E'',F_0')=0$ by \eqref{ACM bundle eq.},  there is a morphism $u\colon E\to F_0'$ such that $ui=a'$. The morphism
			\[
			a=\begin{bmatrix}u\\a''p\end{bmatrix}:
			E\longrightarrow F_0'\oplus F_0''
			\]
			is a monomorphism. Its cokernel $K$ fits into the commutative diagram
			\[
			\begin{tikzcd}[column sep=2.4em,row sep=2em]
				0\arrow[r]&E'\arrow[r,"i"]\arrow[d,"a'"']
				&E\arrow[r,"p"]\arrow[d,"a"]
				&E''\arrow[r]\arrow[d,"a''"]&0\\
				0\arrow[r]&F_0'\arrow[r,"\begin{bsmallmatrix}
					1\\
					0
				\end{bsmallmatrix}"]\arrow[d]
				&F_0'\oplus F_0''\arrow[r,"{\begin{bsmallmatrix}0 \ 1\end{bsmallmatrix}}"]\arrow[d]
				&F_0''\arrow[r]\arrow[d]&0\\
				0\arrow[r]&K'\arrow[r,dotted]&K\arrow[r,dotted]&K''\arrow[r]&0
			\end{tikzcd}
			\]
			with exact rows and columns.
			Moreover, $a$ is a left $\mathscr{F}$-approximation. Indeed, for any
			$h\colon E\to F$ with $F\in\mathscr{F}$, there exists a morphism $t'\colon F_0' \to F$ such that $hi=t'a'$ 
			since $a'$ is a left $\mathscr{F}$-approximation.
			Then $(h-t'u)i=0$ implies that
			$h-t'u$ factors through $p$. Thus we have $h-t'u=sp$ for some morphism $s\colon E''\to F$.
			Since $a''\colon E''\to F_0'' $ is an $\mathscr{F}$-approximation, there is a morphism $t''\colon F_0''\to F$ such that $s=t''a''$, which gives $$h=t'u+t''a''p=\begin{bmatrix}
				t' & t''
			\end{bmatrix}a.$$
			
				Similarly,	by Lemma \ref{lemma 4.3}, we have
			$\Ext^1_{\X}(K'',F_1')=0$.  Hence $b'$ extends  to a morphism $v\colon K\to F_1'$ with $vi'=b'$. The morphism
			\[
			b=\begin{bmatrix}v\\b''p'\end{bmatrix}:
			K\longrightarrow F_1'\oplus F_1''
			\]
			is a monomorphism. Its cokernel $\coker b$ fits	into the  commutative diagram
			\[
			\begin{tikzcd}[column sep=2.4em,row sep=2em]
				0\arrow[r]&K'\arrow[r,"i'"]\arrow[d,"b'"']
				&K\arrow[r,"p'"] \arrow[d,"b"]
				&K''\arrow[r]\arrow[d,"b''"]&0\\
				0\arrow[r]&F_1'\arrow[r,"\begin{bsmallmatrix}
					1\\
					0
				\end{bsmallmatrix}"]\arrow[d]
				&F_1'\oplus F_1''\arrow[r,"{\begin{bsmallmatrix}0 \ 1\end{bsmallmatrix}}"]\arrow[d]
				&F_1''\arrow[r]\arrow[d]&0\\
				0\arrow[r]&F_2'\arrow[r,dotted]&\coker b\arrow[r,dotted]&F_2''\arrow[r]&0
			\end{tikzcd}
			\]	with exact rows and columns. The bottom row splits since
			$\Ext^1_{\X}(F_2'',F_2')=0$
			by \eqref{ACM bundle eq.}. This
			yields an exact sequence
			$
			0\to K\to F_1'\oplus F_1''\to F_2'\oplus F_2''\to0.
			$ 
			Therefore, we have an exact sequence
			\[
			0\to E\xrightarrow{a}F_0'\oplus F_0''
			\to F_1'\oplus F_1''\to F_2'\oplus F_2''\to0			
			\] in $\coh \X$, where $a$ is a left $\mathscr{F}$-approximation of $E$ and each $F_i',F_i''\in \mathscr{F}$. 
			Hence $E\in\mathscr{H}$. Finite direct sums are included in this case. 	Thus by Lemma~\ref{lem:lines}, $\mathscr{H}\supset\mathscr{L}$.
			
		\emph{Step $2$}: We show that $\mathscr{H}$ is closed under  direct summands.				
		Suppose that $E\oplus E'\in\mathscr{H}$, and choose
		\begin{equation}\label{eq:sum-core}
			0\to E\oplus E'\xrightarrow{f}F_0\to F_1\to F_2\to0
		\end{equation}
		as in \eqref{L equ.}. 
		Notice that if $a\colon Z\to A$ is a minimal left $\mathscr F$-approximation,
		then any left $\mathscr F$-approximation $f\colon Z\to F$ can,
		after a suitable identification $F\simeq A\oplus Q$, be written as
		$
		f=\begin{bmatrix}a & 0\end{bmatrix}^\top
		\colon Z\longrightarrow A\oplus Q.
		$
		Moreover, finite direct sums of minimal left
		$\mathscr F$-approximations are again minimal left
		$\mathscr F$-approximations. 
		
		The restrictions of $f$ to the diect summands $E$ and $E'$ are left
		$\mathscr F$-approximations, so we may choose minimal ones
		$a\colon E\to A$ and $a'\colon E'\to A'$.
		Their direct sum is again a minimal left $\mathscr F$-approximation.  Thus we may identify
		\[
		F_0=A\oplus A'\oplus Q
		\quad \text{and} \quad
		f=\begin{bmatrix}a&0\\0&a'\\0&0\end{bmatrix},
		\] where $Q\in \mathscr{F}$. 
		Since $f$ is a monomorphism, so are $a$ and $a'$. Put $K=\coker a$ and
		$K'=\coker a'$. Thus $\coker f=K\oplus K'\oplus Q$. Then \eqref{eq:sum-core} induces an exact sequence
		\[
		0\rightarrow K\oplus K'\oplus Q
		\xrightarrow{j}F_1\rightarrow F_2\rightarrow0.
		\]
		Since $\Ext^1_{\X}(F_2,F)=0$ for $F\in\mathscr{F}$, the map $j$ is
		a left $\mathscr{F}$-approximation. Choose minimal left $\mathscr F$-approximations
		$b\colon K\to B$ and $b'\colon K'\to B'$ in $\coh\X$.
		Since $b\oplus b'\oplus1_Q$ is a minimal left $\mathscr F$-approximation, the same argument gives
		\[
		F_1=B\oplus B'\oplus Q\oplus Q'
		\quad \text{and} \quad
		j=\begin{bmatrix}
			b&0&0\\0&b'&0\\0&0&1_Q\\0&0&0
		\end{bmatrix},
		\] where $Q'\in \mathscr{F}$. 
		Thus $b$ and $b'$ are monomorphisms and
		$F_2\simeq C\oplus C'\oplus Q'$, where
		$C=\coker b$ and $C'=\coker b'$. 
		Since $\mathscr{F}$ is closed under direct summands, we have $C,C'\in\mathscr{F}$.
		Connecting the corresponding short exact sequences yields
		\[
		0\to E\xrightarrow{a}A\to B\to C\to0
		\quad \text{and} \quad
		0\to E'\xrightarrow{a'}A'\to B'\to C'\to0,
		\]
		whose first morphisms are left $\mathscr{F}$-approximations.
		Consequently $E,E'\in\mathscr{H}$.
		
		\emph{Step $3$}: We show that $\underline{\mathscr{H}}$ is
		closed under retracts. 
		Suppose that $\underline E$ is a retract of $\underline G$,
		where $E\in \ACM \X$ and $G\in\mathscr{H}$. Choose representatives $i\colon E\to G$ and
		$p\colon G\to E$ such that $\underline{p}\circ\underline{i}=1_{\underline{E}}$. This means that
		$pi-1_E=ba$  for some factorization
		$E\xrightarrow{a}P\xrightarrow{b}E$ through an object $P\in\mathscr{L}$.
		Then
		\[\begin{bmatrix}p&-b\end{bmatrix}
		\begin{bmatrix}i\\a\end{bmatrix}=1_E,
		\]
		and thus  $E$ is a direct summand of $G\oplus P$. Since 
		$G,P\in\mathscr H$, by Step 1 and Step 2, we have $E\in\mathscr H$ and thus $\underline{E}\in \underline{\mathscr{H}}$.
		This proves that $\underline{\mathscr H}$ is closed under
		retracts. The same argument shows that $\mathscr H$ is closed
		under stable isomorphisms. In particular,
		$\underline{\mathscr H}$ is closed under isomorphisms and
		direct summands.
		
		\emph{Step $4$}: We show that $\underline{\mathscr{H}}$ is closed under  extensions and finite direct sums.	
		
		Let $
		\underline E'\longrightarrow\underline E\longrightarrow
		\underline E''\longrightarrow \underline E'[1]
		$
		be a triangle in $\underline{\ACM} \, \X$ with $\underline E',	\underline E''\in \underline{\mathscr H}$. It is isomorphic to a triangle induced by an 
		$\mathscr E$-conflation
		\[
		0\rightarrow H'\rightarrow H\rightarrow H''\rightarrow0
		\]
		in $\ACM\X$. Since $\underline{\mathscr H}$ is closed under isomorphisms, we have
		$\underline H',\underline H''\in\underline{\mathscr H}.$
		By the stable isomorphism closure of $\mathscr H$ proved in Step~3,
		it follows that
		$H',H''\in\mathscr H.$
		By Step~1, we have $H\in\mathscr H$. Since
		$\underline E\simeq\underline H$ and
		$\underline{\mathscr H}$ is closed under isomorphisms, we obtain
		$\underline E\in\underline{\mathscr H}.$
		Thus $\underline{\mathscr H}$ is extension-closed.
		Since the stable quotient functor is additive, closure under finite direct sums follows from Step~1.
		\end{proof}
		
		We now show that Properties (L) and (R) hold for each ACM bundle
		
		\begin{proposition} \label{Property (L) and (R) for ACM bundles}
			Each ACM bundle has Properties (L) and (R). 
		\end{proposition}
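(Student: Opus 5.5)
We need to show every ACM bundle lies in $\mathscr H$; Property (R) will then follow by duality. The plan is to use Lemma~\ref{H closed}. Since $\underline{\mathscr H}$ is closed under isomorphisms, extensions, finite direct sums and direct summands in $\underline{\ACM}\,\X$, and since $\mathscr H$ is closed under stable isomorphisms and contains $\mathscr L$, it suffices to show two things: that $\underline{\mathscr H}$ contains a set of objects generating $\underline{\ACM}\,\X\simeq\underline{\CM}^{\L}R$ under extensions, summands and isomorphisms, and that $\underline{\mathscr H}$ is closed under the needed shifts.

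First I will show that $\underline{\mathscr H}$ is closed under the suspension $[1]$ and its inverse. For $X\in\mathscr H$ with sequence \eqref{L equ.}, take an $\mathscr E$-conflation $0\to X\to P\to X[1]\to 0$ with $P\in\mathscr L$. One can choose $P=F^1\oplus P'$, where $X\to F^1$ is the $\mathscr F$-approximation and $X\to P'$ is a left $\mathscr L$-approximation. The cokernel is then related to $K=\operatorname{Im}f^1$, and one checks Property (L) for $X[1]$ by the horseshoe-type argument of Step~1 of Lemma~\ref{H closed}. A cleaner route uses the degree shift: $\mathscr F$ is stable under $(\x_4)$ and $(\c)$, so $\mathscr H$ is closed under the twists $(\ell\x_4)$ and $(\c)$. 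By Proposition~\ref{susp.ext.}, $[2]=(\c)$ on $2$-extension bundles.

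Next I will use the known tilting/generation result for $\underline{\CM}^{\L}R$ from \cite{HIMO}. The stable category has a tilting object built from $2$-extension bundles $E_L\langle\x\rangle$; equivalently, $\underline{\ACM}\,\X$ is the thick closure of the $2$-extension bundles. Since $\underline{\mathscr H}$ is extension-closed and summand-closed but not obviously triangulated, I instead aim at a filtration statement. Every ACM bundle $X$ admits, via its minimal $\mathscr L$-resolution, a filtration whose factors are twists of $2$-extension bundles and line bundles, or else $X$ is built from these by extensions. Closure under $[1]$ will handle cones, since a triangle $A\to B\to C\to A[1]$ puts $C$ in the extension closure of $B$ and $A[1]$.

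The key computation is that each $2$-extension bundle $E_L\langle\x\rangle$ has Property (L). From \eqref{2-extension bundle exact}, $E$ is an extension of $\ker\gamma$ by $L(\w)$. Both $L(\w)$ and $L\langle\x\rangle$, $L(\x)$ lie in $\mathscr H$ by Lemma~\ref{lem:lines}, and $\ker\gamma$ has a Koszul-type resolution. The plan is to construct (L) for $E$ directly. I would splice the Koszul sequence \eqref{Koszul complex L} for $L(\w)$ with a lifting along $\alpha$, using that $\Ext^1_{\X}(\ker\gamma,F)$ vanishes for relevant fading $F$, or using Proposition~\ref{susp.ext.} to move to a twist where $L(\w)$ becomes fading.

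I expect the main obstacle to be this step, namely producing (L) for $E_L\langle\x\rangle$. The difficulty is controlling that the first map is an $\mathscr F$-approximation and that the final cokernel lies in $\mathscr F$ rather than merely in $\mathscr L$. If the direct construction fails, I will try to use Proposition~\ref{susp.ext.} with the index $i\le 3$ chosen so that the twist $(1+\lambda_i)\x_i$ pushes the relevant line bundles to coefficient $p_i-1$, making them fading.

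Finally, Property (R) follows by applying the duality $\mathscr H om(-,\OO(\w))$ or $\Hom(-,\OO)$, which preserves $\ACM\X$. I need to check that this duality sends fading line bundles to fading ones up to twist. The relevant fact is that $-\x$ has $\lambda_i=p_i-1$ exactly when $\x$ has $\lambda_i=0$ after normalization, so I would use $\OO(-\x+\sum_{i\le3}(p_i-1)\x_i)$. This duality exchanges left and right approximations and reverses the sequences.
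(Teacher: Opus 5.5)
Your skeleton matches the paper's: put a shift-closed family of $2$-extension bundles into $\mathscr H$, use that it thickly generates $\underline{\ACM}\,\X$ (via the tilting object $T=\bigoplus_{0\le\y\le\vdelta}E\langle\y\rangle$ of \cite{CRW}) together with Lemma~\ref{H closed}, and get (R) by duality. \textbf{Gap.} The step that carries all the content, namely that the relevant $2$-extension bundles have Property (L), is exactly the one you leave open, and the routes you sketch do not work as stated.

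For general $L$, the terms $L\langle\x\rangle$ and $L(\x)$ of \eqref{2-extension bundle exact} are not in $\mathscr F$. For instance, for $L=\OO$ and $\x=0$, the summand $\OO(-\x_4)$ of $L\langle\x\rangle$ and $L(\x)=\OO$ are both persistent. So splicing the Koszul sequence of $L(\w)$ with the defining sequence cannot produce fading terms. That these line bundles lie in $\mathscr H$ is irrelevant, and twisting so that $L(\w)$ becomes fading points the wrong way. Your last suggestion, twisting so that the relevant line bundles acquire coefficient $p_i-1$, is the right instinct.

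Making it precise is the missing idea. Choose $L$ so that $L(\w)$ is persistent while $L\langle\x\rangle$ and $L(\x)$ are fading: with $\vec t=\sum_{i=1}^3(p_i-2)\x_i$, take $L=\OO(\vec t-\sum_{i=1}^3\lambda_i\x_i-\w+a\x_4)$. Let $f^0\colon L(\w)\to U^0=\bigoplus_{i\le3}L(\w+(1+\lambda_i)\x_i)$ be the first map of \eqref{Koszul complex L} for $L(\w)$. Serre duality gives $\Ext^2_{\X}(L(\x),U^0)\simeq D\Hom_{\X}(U^0,L(\x+\w))=0$, since each $\x-(1+\lambda_i)\x_i$ has $\c$-coefficient $-1$. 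Hence $\Ext^1_{\X}(\ker\gamma,U^0)=0$, so $f^0$ extends along $\alpha$ to a morphism from \eqref{2-extension bundle exact} to the Koszul sequence. Its mapping cone, after cancelling the contractible part on $1_{L(\w)}$, is the required sequence, and one checks directly that its first map is a left $\mathscr F$-approximation.

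Only these special $L$ are needed. Proposition~\ref{susp.ext.} gives $E_L\langle\x\rangle\simeq E\langle\vdelta-\x\rangle((a+\lambda_4+2)\x_4)$, so they exhaust all $E\langle\y\rangle(b\x_4)$, and \eqref{susp.} with $i=4$ shows this family is closed under $[\pm1]$. This makes two of your claims unnecessary. Your claim that $\underline{\mathscr H}$ is closed under $[1]$ is not established: the horseshoe sketch is not an argument, and $[2]=(\c)$ only yields even shifts. Your filtration statement via minimal $\mathscr L$-resolutions is unsupported.

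\textbf{Duality error.} Your duality step is wrong as written. Since $-\x_i=(p_i-1)\x_i-\c$, the normalized $\x_i$-coefficient of $-\z$ equals $p_i-1$ exactly when that of $\z$ is $1$, not $0$. Both of your candidates, $\Hhom{\X}{-}{\OO(\w)}$ and $\OO(\z)\mapsto\OO(-\z+\sum_{i\le3}(p_i-1)\x_i)$, act on the $\x_i$-coefficients ($i\le3$) by $z_i\mapsto p_i-1-z_i$. They therefore send the fading value $p_i-1$ to $0$ and do not preserve $\mathscr F$. For example, for $p_1=p_2=p_3=2$ the fading bundle $\OO(\x_1+\x_2+\x_3)$ goes to a persistent one under either map. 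The twist that works is $\vec t$: $\theta(X)=X^\vee(\vec t)$ acts by $z_i\mapsto p_i-2-z_i \pmod{p_i}$, which fixes $p_i-1$. So $\theta(\mathscr F)=\mathscr F$, and (R) for $X$ follows from (L) for $\theta(X)$.
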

		\begin{proof}
			Set $\vec t=\sum_{i=1}^{3}(p_i-2)\x_i$. For any $a\in \Z$ and $\x=\sum_{i=1}^{4}\lambda_i\x_i$ with $0\le \lambda_i\le p_i-2$, we let 
			$$L=L_{(\x,a)}=\OO(\vec t-\sum_{i=1}^{3}\lambda_i\x_i-\w+a\x_4) \quad \text{and}\quad V=E_L\langle \x \rangle.$$
			The proof is divided into the following three steps.
			
			\emph{Step $1$}:	We  show that $V\in \mathscr{H}$.			
			 We recall that  $0\to P \xrightarrow{\alpha} V \xrightarrow{\beta} F \xrightarrow{\delta} F'\to 0$ is the
 		defining sequence for the $2$-extension bundle $V$, 			
			where $$P=L(\w),\quad F=\bigoplus_{1\le i \le 4}L(\x-(1+\lambda_i)\x_i)\quad\text{and} \quad F'=L(\x).$$
			Note that $P$ is persistent and $F,F'\in \mathscr{F}$. Choose the Koszul sequence \eqref{Koszul complex L} for $P$ constructed in the proof of Lemma \ref{lem:lines},
			$$0\to P\xrightarrow{f^0} U^0\xrightarrow{f^1} U^1 \xrightarrow{f^2} U^2 \to 0,$$
			where each $U^i\in \mathscr{F}$ and $f^0$ is a left $\mathscr{F}$-approximation. Using Auslander--Reiten--Serre duality, we have
			\begin{equation} \label{prop 4.6 equ}
				\Ext_{\X}^2(F',U^0)\simeq D\Hom_{\X}(U^0,P(\x))=\bigoplus_{1\le i\le 3}D\Hom_{\X}(P((\lambda_i+1)\x_i),P(\x)),
			\end{equation}
			which equals zero since the coefficient of $\c$ in the normal form of $\x-(\lambda_i+1)\x_i$ is $-1$ for each $1\le i\le 3$. 
			
			Let $K=\ker\delta$. Applying $\Hom_{\X}(-,U^0)$ to
			$0\to K\to F\to F'\to0$, and using
			\eqref{ACM bundle eq.} and \eqref{prop 4.6 equ},
			gives $\Ext^1_{\X}(K,U^0)=0$. Hence $f^0$ extends to
			$u\colon V\to U^0$. Since $f^1u\alpha=0$, 
			 $f^1u$ factors through $K$, and then the
			resulting morphism $K\to U^1$ extends to $v\colon F\to U^1$ since
			$\Ext^1_{\X}(F',U^1)=0$ by (\ref{extension spaces between line bundles}). The cokernel property of $\delta$
			then gives $w\colon F'\to U^2$. Thus we obtain a commutative diagram
			\begin{equation*}
				\begin{tikzcd}
				0\arrow[r]&	P\arrow[r,"\alpha"]\arrow[d,equal]
					&V\arrow[r,"\beta"]\arrow[d,"u",dotted]
					&F\arrow[r,"\delta"]\arrow[d,"v",dotted]
					&F'\arrow[d,"w",dotted]\arrow[r]&0\\
				0\arrow[r]&	P\arrow[r,"f^0"]
					&U^0\arrow[r,"f^1"]
					&U^1\arrow[r,"f^2"]
					&U^2\arrow[r]&0
				\end{tikzcd}
			\end{equation*}
			with exact rows. Its
			mapping cone is  acyclic. Cancelling the contractible
			summand associated with $1_P$ yields the exact sequence
			$$
					0\longrightarrow V
					\xrightarrow{\begin{bmatrix}
							-\beta\\
							u
					\end{bmatrix}}F\oplus U^0
					\xrightarrow{{\begin{bmatrix}
								-\delta  & 0\\
								v & f^1
								\end{bmatrix}}}
					 F'\oplus U^1
					\xrightarrow{{\begin{bmatrix}
								w  & f^2
					\end{bmatrix}}}
					U^2\longrightarrow0,
			$$
			in which the last three terms belong to $\mathscr{F}$. 
			
		We show that $\begin{bmatrix}-\beta & u\end{bmatrix}^\top$
		is a left $\mathscr F$-approximation.
		Let $h\colon V\to G$ with $G\in\mathscr F$.
		Since $f^0$ is a left $\mathscr F$-approximation,
		there exists a morphism $s\colon U^0\to G$ such that
		$h\alpha=sf^0=su\alpha$. 
		Then $(h-su)\alpha=0$, and thus $h-su$ factors through $K$, and the
		resulting morphism $K\to G$ extends to $t\colon F\to G$ such that $h-su=t\beta$ since
		$\Ext^1_{\X}(F',G)=0$ by (\ref{extension spaces between line bundles}). Thus
		$$
		h=\begin{bmatrix}-t&s\end{bmatrix}
		\begin{bmatrix}-\beta\\u\end{bmatrix}.
		$$		
			This proves that $\begin{bmatrix}-\beta & u\end{bmatrix}^\top$
			is a left $\mathscr F$-approximation and thus  $E_{L}\langle\x\rangle\in\mathscr{H}$.
			
		\emph{Step $2$}:	We show that $\mathscr{H}=\ACM \X$. 			
		 	By \cite[Theorem~6.1]{CRW}, the object
			\[
			T=\bigoplus_{0\leq\y\leq\vdelta}E\langle \y \rangle
			\]
		 	 is  tilting  in
			$\underline{\ACM}\,\X$. In particular,
			$\thick(T)=\underline{\ACM}\,\X$. 	By Proposition \ref{susp.ext.}, we have
			\[
			E_L\langle\x\rangle
			\simeq E\langle\vdelta-\x\rangle
			((a+\lambda_4+2)\x_4).
			\]
			Thus Step~1 shows that $\mathscr H$ contains all
			$E\langle\y\rangle(b\x_4)$ with
			$0\leq\y\leq\vdelta$ and $b\in\mathbb Z$.
			Applying \eqref{susp.} for $i=4$ gives
			$T[m]\in\underline{\mathscr H}$ for any $m\in\mathbb Z$.
			Every object of $\thick(T)$ is a direct summand of a finite
			iterated extension of finite direct sums of suspensions of $T$.
			Since $\underline{\mathscr H}$ is closed under finite direct sums, extensions and direct
			summands by Lemma~\ref{H closed}, it follows that $\underline{\mathscr H}=\thick(T)=\underline{\ACM}\,\X$.		
			Finally, since $\mathscr H$ is closed under stable isomorphisms
			by Lemma~\ref{H closed}, we have $\mathscr H=\ACM\X$.
			
			\emph{Step $3$}: We deduce Property (R) by duality. 			
		 Recall that \emph{vector bundle duality} $(-)^{\vee}=\Hhom{\X}{-}{\OO}\colon \vect\X \to \vect\X$, defined by the sheaf Hom functor, sends the line bundle $\OO(\x)$ to $\OO(-\x)$ for any $\x \in \L$. Denote
			\[
			\theta(X):=X^\vee(\vec t).
			\]
			It preserves ACM bundles. Indeed, since $\theta$ is an exact duality of $\vect \X$ and $\theta^2\simeq \rm Id$,  for any $X\in\ACM\X$ and
			$P\in\mathscr L$, there is an isomorphism
			\[
			\Ext^1_{\X}(\theta(X),P)
			\simeq \Ext^1_{\X}(\theta(P),X),
			\]
			 which equals zero since $\theta(P)\in \mathscr{L}$ and \eqref{ACM bundle eq.}.  Thus $\theta(X)\in \ACM \X$ by \eqref{ACM bundle eq.}. 		 
			 Clearly, the duality $\theta$ preserves persistent line bundles. 
			 It also preserves fading line bundles. Indeed, let $\z=\sum_{i=1}^{3}z_i\x_i+z\x_4$ in the normal form such that $\OO(\z)\in \mathscr{F}$. The duality $\theta$ acts on the coefficient of $\x_i$ by sending			
			$z_i$ to $p_i-2-z_i\pmod{p_i}$. 			
			Since $p_i-1$ is fixed by this operation, the fading condition is
			preserved. Hence $\theta(\mathscr{F})=\mathscr{F}$.

			By Step 2, we apply Property~(L) to $\theta(X)$ and  obtain an exact sequence
			\[
			0\to \theta(X)\xrightarrow{f^0}F^1
			\xrightarrow{f^1}F^2\xrightarrow{f^2}F^3\to0.
			\]
			The kernel of $f^2$ is a vector bundle, so duality preserves
			exactness and yields
			\begin{equation} \label{duality Property (R)}
				0\to \theta(F^3)\to \theta(F^2)
				\to \theta(F^1)\xrightarrow{\theta(f^0)}
				X\to0.
			\end{equation}
			All three terms on the left belong to $\mathscr F$. If $h\colon F\to X$
			with $F\in\mathscr F$, then $\theta(h)$ factors through
			$f^0$; dualizing again shows that $h$ factors through
			$\theta(f^0)$. Thus the latter is a right
			$\mathscr F$-approximation. Finally, a contravariant equivalence sends
			weak cokernels to weak kernels. Hence the sequence \eqref{duality Property (R)} gives
			Property~(R) for $X$.							
		\end{proof}
		
		\begin{corollary} \label{F removable of ACM X}
				$\mathscr{F}$ is a removable subcategory of $(\ACM \X,\mathscr{E})$.
		\end{corollary}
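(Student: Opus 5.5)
The plan is to apply Proposition~\ref{A sufficient condition} with $n=1$ to the Frobenius exact category $(\ACM\X,\mathscr E)$, whose projective-injectives are exactly $\mathscr L$. Since $\mathscr F\subset\mathscr L$, we have $\mathscr F\subset\mathscr P\cap\mathscr I$. It then remains to verify conditions (a) and (b) of that proposition for every $Z\in\ACM\X$. For $n=1$, condition (a) asks for a sequence $Z\xrightarrow{f^0}F\xrightarrow{f^1}P^1$ with $f^0$ a left $\mathscr F$-approximation, $f^1$ a weak cokernel of $f^0$ and $P^1\in\mathscr P=\mathscr L$. Condition (b) is the dual requirement.

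For (a), take $Z=X\in\ACM\X$. By Proposition~\ref{Property (L) and (R) for ACM bundles}, $X$ has Property (L), so there is an exact sequence
\[0\to X\xrightarrow{f^0}F^1\xrightarrow{f^1}F^2\xrightarrow{f^2}F^3\to 0\]
with all $F^i\in\mathscr F\subset\mathscr L$ and $f^0$ a left $\mathscr F$-approximation. Lemma~\ref{lemma 4.3} says this sequence is right $2$-exact in $\ACM\X$; in particular $f^1$ is a weak cokernel of $f^0$ in $\ACM\X$. Truncating to $X\xrightarrow{f^0}F^1\xrightarrow{f^1}F^2$ therefore gives the required sequence, with $F=F^1$ and $P^1=F^2\in\mathscr L$. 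All objects lie in $\ACM\X$, since line bundles are ACM.

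For (b), use Property (R) instead:
\[0\to F_3\xrightarrow{f_2}F_2\xrightarrow{f_1}F_1\xrightarrow{f_0}X\to0,\]
where $f_0$ is a right $\mathscr F$-approximation. I would argue that $f_1$ is a weak kernel of $f_0$ in $\ACM\X$, by the statement dual to Lemma~\ref{lemma 4.3}. Set $K=\ker f_0=\operatorname{Im} f_1$. The sequence $0\to F_3\to F_2\to K\to 0$ is exact and $\Ext^1_\X(T,F_3)=0$ for $T\in\ACM\X$ by \eqref{ACM bundle eq.}. Hence every morphism $T\to K$ lifts to $F_2$, and any $h\colon T\to F_1$ with $f_0h=0$ factors through $f_1$.

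Alternatively, the proof of Proposition~\ref{Property (L) and (R) for ACM bundles} already records that $\theta$ turns weak cokernels into weak kernels, which also gives (b). Proposition~\ref{A sufficient condition} then yields that $\mathscr F$ is removable. The only step needing care is this dualization of Lemma~\ref{lemma 4.3}; I expect it to be routine by the symmetric form of \eqref{ACM bundle eq.}.
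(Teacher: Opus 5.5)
Your proposal is correct and is essentially the paper's proof: you truncate the sequences from Properties (L) and (R) to $X\xrightarrow{f^0}F^1\xrightarrow{f^1}F^2$ and $F_2\xrightarrow{f_1}F_1\xrightarrow{f_0}X$, then use Lemma~\ref{lemma 4.3} and its dual to obtain conditions (a) and (b) of Proposition~\ref{A sufficient condition} with $n=1$. Your explicit check of the dual of Lemma~\ref{lemma 4.3} is exactly the argument the paper leaves implicit: you lift $T\to K$ to $F_2$ using $\Ext^1_{\X}(T,F_3)=0$.
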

		\begin{proof} Clearly, $\mathscr{F}\subset \mathscr{L}$. 
			For each $X\in \ACM \X$, $X$ has Properties (L) and (R).  Truncating the sequences in Properties (L) and (R), we have sequences	$$X\xrightarrow{f^0}F^1\xrightarrow{f^1}F^2\quad\text{and}\quad F_2\xrightarrow{f_1}F_1\xrightarrow{f_0}X.$$
			By Lemma \ref{lemma 4.3} and its dual,  these sequences yield conditions (a) and (b) of Proposition \ref{A sufficient condition} for $n=1$, respectively. Thus $\mathscr{F}$ is a removable subcategory of $\ACM \X$.
		\end{proof}
		
		\subsection{$2$-exact categories from GL projective planes}

		\begin{proposition}\label{ACM X and vect X} 
			The following classes of subcategories of $\coh \X$  coincide:
			\begin{itemize}
				\item[(a)]	$2$-cluster tilting subcategories of $\ACM \X$;	
				\item[(b)]	$2$-cluster tilting subcategories of  $\vect \X$  containing $\mathscr{L}$.
			\end{itemize}
		\end{proposition}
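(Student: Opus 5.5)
We compare the two defining conditions directly, using that $\ACM\X$ is an extension-closed, functorially finite subcategory of $\vect\X$. The key facts are: every line bundle in $\mathscr L$ is projective-injective in $\ACM\X$, and by \eqref{ACM bundle eq.} we have
\[
\ACM\X=\{X\in\vect\X\mid \Ext^1_{\X}(\mathscr L,X)=0\}=\{X\in\vect\X\mid \Ext^1_{\X}(X,\mathscr L)=0\}.
\]
Since both exact structures are inherited from $\coh\X$, the group $\Ext^1$ computed in $\ACM\X$ or in $\vect\X$ coincides with $\Ext^1_{\X}$. For $n=2$ only $\Ext^1$ enters the cluster tilting condition.

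For (a)$\Rightarrow$(b), let $\mathscr C$ be $2$-cluster tilting in $\ACM\X$.
\begin{itemize}
\item Since $\mathscr C$ is cogenerating, each line bundle $P$ admits an inflation $P\to C$ with $C\in\mathscr C$. This inflation splits because $P$ is injective in $\ACM\X$. Hence $\mathscr L\subset\mathscr C$, using closure of $\mathscr C$ under summands, which follows from the $\Ext$-description.
\item For the orthogonality condition in $\vect\X$, take $X\in\vect\X$ with $\Ext^1_{\X}(\mathscr C,X)=0$. Since $\mathscr L\subset\mathscr C$, this gives $X\in\ACM\X$ by \eqref{ACM bundle eq.}, and then $X\in\mathscr C$. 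The other side is symmetric.
\item For the generating property, given $X\in\vect\X$, choose an epimorphism $P\to X$ from $P\in\mathscr L$. Such a $P$ exists since $\mathscr L$ generates $\vect\X$, as every coherent sheaf is a quotient of a direct sum of line bundles. Its kernel lies in $\vect\X$ by Lemma~\ref{kernels of epimorphisms in vect X}, so this is a deflation in $\vect\X$.
\item For the cogenerating property, apply the duality $(-)^\vee$ to the generating property; it exchanges deflations with inflations of $\vect\X$.
\item For functorial finiteness, compose approximations. A right $\mathscr C$-approximation of $X\in\vect\X$ is obtained by first taking a right $\ACM\X$-approximation $Y\to X$ and then a right $\mathscr C$-approximation of $Y$. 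Left approximations are handled dually.
\end{itemize}

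The main obstacle I expect is the functorial finiteness of $\ACM\X$ in $\vect\X$. I would handle it via the stable category $\underline{\ACM}\,\X\simeq\underline{\CM}^{\L}R$, which is Hom-finite with $\ACM\X$ a Krull--Schmidt Frobenius category. Concretely, for $X\in\vect\X$ I would try a right approximation of the form $\mathscr L$-cover $\oplus$ (finite sum). This uses that $\Hom(\ACM\X,X)$ modulo morphisms factoring through $\mathscr L$ is controlled by $\Ext$ with finitely many indecomposables. Alternatively, I would cite that CM approximations exist for graded Gorenstein rings, namely the Auslander--Buchweitz approximation $0\to P\to C\to M\to0$ with $C$ CM and $P$ of finite projective dimension. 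Transported along $\pi$, this gives a right $\ACM\X$-approximation of any vector bundle, since $\Ext^1(\ACM\X,\mathscr L)=0$. Left approximations follow by applying $(-)^\vee$.

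For (b)$\Rightarrow$(a), let $\mathscr C\supset\mathscr L$ be $2$-cluster tilting in $\vect\X$.
\begin{itemize}
\item Every $C\in\mathscr C$ satisfies $\Ext^1_{\X}(\mathscr L,C)=0$, so $\mathscr C\subset\ACM\X$.
\item For the orthogonality condition in $\ACM\X$, take $X\in\ACM\X$ with $\Ext^1_{\X}(\mathscr C,X)=0$. Then $X\in\vect\X$, so $X\in\mathscr C$; the other side is symmetric.
\item Functorial finiteness in $\ACM\X$ follows by restricting approximations from $\vect\X$, since $\mathscr C\subset\ACM\X$.
\item Generating and cogenerating are immediate because $\mathscr L\subset\mathscr C$ generates and cogenerates $\ACM\X$ via the Frobenius structure.
\end{itemize}
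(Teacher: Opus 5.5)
The paper gives no argument here; it simply cites \cite[Theorem~5.23]{HIMO}. Your direct check of both definitions is a genuinely different route, and most of it is fine. Once $\mathscr L\subset\mathscr C$, the orthogonality conditions transfer formally through \eqref{ACM bundle eq.}. Generation in $\vect\X$ follows from Lemma~\ref{kernels of epimorphisms in vect X}, and cogeneration from the exact duality $(-)^\vee$. Composing approximations is legitimate, and the direction (b)$\Rightarrow$(a) is complete. You also correctly isolate the one non-formal input: $\ACM\X$ must be functorially finite in $\vect\X$.

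That input is not actually established. Your first suggestion is only a heuristic. In the second, the reason you give does not work. The kernel $P$ of an Auslander--Buchweitz approximation $0\to P\to C\to M\to 0$ only has finite projective dimension, so $\pi P$ need not lie in $\mathscr L$. Moreover, the vanishing $\Ext^1_{\X}(\ACM\X,\mathscr L)=0$ does not propagate along a finite $\mathscr L$-resolution, because $\Ext^2_{\X}(\ACM\X,\mathscr L)\neq 0$ in general (see \eqref{extension spaces between line bundles}). In fact the claim fails for an arbitrary representative $M$ of $X$. Take $X=\OO$ and $M=R_+$. Then $\Hom_{\X}(\OO,\pi C)=C_0$, and its image in $\Hom_{\X}(\OO,\OO)=R_0$ lies in $(R_+)_0=0$. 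Hence $1_{\OO}$ does not factor through $\pi C\to\OO$.

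The repair is to choose the representative with $\operatorname{depth}M\ge 2$, for instance the module of sections $\bigoplus_{\x\in\L}\Hom_{\X}(\OO,X(\x))$. The depth lemma then gives $\operatorname{depth}P\ge\min\{\operatorname{depth}C,\operatorname{depth}M+1\}=3$. So $P$ is maximal Cohen--Macaulay of finite projective dimension, hence graded free by the Auslander--Buchsbaum formula. Then $\pi P\in\mathscr L$, and your $\Ext^1$-argument does show that $\pi C\to X$ is a right $\ACM\X$-approximation.

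Alternatively, take the universal extension of $X$ by the line bundles $\OO(\x)$ with $\Ext^1_{\X}(X,\OO(\x))\neq 0$. There are only finitely many such $\x$ by Serre duality, since $\bigoplus_{\y}\Ext^1_{\X}(\OO,X(\y))$ has finite length for a vector bundle $X$. The middle term is then ACM, and the kernel lies in $\mathscr L$.

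With left approximations obtained via $(-)^\vee$, which preserves both $\ACM\X$ and $\mathscr L$, your proof of (a)$\Rightarrow$(b) then goes through.
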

		\begin{proof}
			This follows from \cite[Theorem 5.23]{HIMO}.
		\end{proof}
		
	As extension-closed subcategories of $\coh\X$, the categories
	$\ACM\X$ and $\vect\X$ inherit exact structures, denoted by
	$\mathscr E$ and $\mathscr E^{\mathrm{vec}}$, respectively.		
	Let $\mathscr C$ be a $2$-cluster tilting subcategory of
	$\ACM\X$. By Proposition \ref{ACM X and vect X}, $\mathscr C$ is also a
	$2$-cluster tilting subcategory of $\vect\X$.
	By Theorem~\ref{n-exact from n-CT}, $\mathscr C$ carries two induced $2$-exact structures $\mathscr X$ and $\mathscr X^{\mathrm{vec}}$. They consist, respectively, of the complexes in
	$\mathbf C_{\mathscr C}^{4}$ that are homotopy equivalent to
	$\mathscr E$-acyclic and $\mathscr E^{\mathrm{vec}}$-acyclic
	complexes of length four with all terms in $\mathscr C$.
	Since every $\mathscr E^{\mathrm{}}$-acyclic complex is $\mathscr E^{\mathrm{vec}}$-acyclic, we have $ \mathscr X^{\mathrm{}} \subset\mathscr X^{\mathrm{vec}}$. But the converse is not true in general, see Example \ref{Example 1}.

	\begin{theorem} \label{main thm 3} Let $\mathscr{C}$ be a $2$-cluster tilting subcategory of $\ACM \X$. Then
			$\mathscr{F}$ is a removable subcategory of $(\mathscr C,\mathscr X)$. Moreover, 
		  $(\mathscr{C}/[\mathscr{F}],\mathscr{X}_{\mathscr{F}})$ is a $2$-exact category, where  $\mathscr{X}_{\mathscr{F}}$ is defined in \eqref{new exact structure}. 		  
		  If moreover, $\mho^2_{\mathscr{E}}(\mathscr{C})\subset\mathscr{C}$, then 	$(\mathscr{C}/[\mathscr{F}],\mathscr{X}_{\mathscr{F}})$ is a Frobenius $2$-exact category.
	\end{theorem}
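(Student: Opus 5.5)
The plan is to deduce everything from Theorem~\ref{Compatibility with cluster tilting}, applied to the exact category $(\mathscr A,\mathscr E)=(\ACM\X,\mathscr E)$ with $n=2$.

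First, Corollary~\ref{F removable of ACM X} states that $\mathscr F$ is a removable subcategory of $(\ACM\X,\mathscr E)$. Since $\mathscr C$ is a $2$-cluster tilting subcategory of $\ACM\X$, Theorem~\ref{Compatibility with cluster tilting}(a) then shows that $\mathscr F$ is a removable subcategory of $(\mathscr C,\mathscr X)$. Here $\mathscr X$ is the $2$-exact structure of Theorem~\ref{n-exact from n-CT} induced by $\mathscr E$. In passing, one should check that $\mathscr F\subset\mathscr C$; this holds because line bundles are $\mathscr E$-projective-injective, and it is also noted inside the proof of Theorem~\ref{Compatibility with cluster tilting}.

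Once removability is established, Theorem~\ref{main thm1}, direction (a)$\Rightarrow$(b), shows that $(\mathscr C/[\mathscr F],\mathscr X_{\mathscr F})$ is a $2$-exact category. Alternatively, the last part of Theorem~\ref{Compatibility with cluster tilting} identifies $\mathscr X_{\mathscr F}$ with the structure induced from the $2$-cluster tilting subcategory $\mathscr C/[\mathscr F]$ of $\ACM\X/[\mathscr F]$. Either route gives the same conclusion.

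For the Frobenius assertion, recall that $(\ACM\X,\mathscr E)$ is Frobenius, with projective-injectives $\mathscr L$. Assume $\mho^2_{\mathscr E}(\mathscr C)\subset\mathscr C$. Theorem~\ref{n-exact from n-CT} then makes $(\mathscr C,\mathscr X)$ a Frobenius $2$-exact category whose projective-injectives are exactly the objects of $\mathscr L$. Hence $\mathscr F\subset\mathscr L=\mathscr P$. Theorem~\ref{main thm2} now shows that $(\mathscr C/[\mathscr F],\mathscr X_{\mathscr F})$ is Frobenius, with projectives $\mathscr L/[\mathscr F]$. The final paragraph of Theorem~\ref{Compatibility with cluster tilting} gives the same conclusion.

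I do not expect any genuine obstacle, since all the substantive work already sits in Proposition~\ref{Property (L) and (R) for ACM bundles} and Corollary~\ref{F removable of ACM X}. The only points needing care are the bookkeeping ones: that the hypothesis "removable for $\mathscr A$" in Theorem~\ref{Compatibility with cluster tilting} refers to the $1$-exact structure $\mathscr E$ on $\ACM\X$ rather than on $\vect\X$, and that $\mathscr F\subset\mathscr P\cap\mathscr I$ for $\mathscr X$ as well as for $\mathscr E$.
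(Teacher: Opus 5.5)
Your proposal is correct and follows the paper's proof essentially verbatim. The chain is the same: removability of $\mathscr F$ in $(\ACM\X,\mathscr E)$ from Corollary~\ref{F removable of ACM X}, transfer to $(\mathscr C,\mathscr X)$ via Theorem~\ref{Compatibility with cluster tilting}(a), the $2$-exact quotient via Theorem~\ref{main thm1}, and the Frobenius case via Theorem~\ref{n-exact from n-CT} followed by Theorem~\ref{main thm2}. Your additional bookkeeping is accurate and only makes explicit what the paper leaves implicit: that $\mathscr F\subset\mathscr C$, that $\mathscr F\subset\mathscr L=\mathscr P$, and that removability refers to $\mathscr E$ on $\ACM\X$ rather than to the structure on $\vect\X$.
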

	\begin{proof} By Corollary  \ref{F removable of ACM X}, $\mathscr{F}$ is a removable subcategory of $(\ACM \X,\mathscr E)$. 
		Together with Theorem \ref{Compatibility with cluster tilting}(a),  $\mathscr{F}$ is a removable subcategory of $(\mathscr C,\mathscr X)$. By Theorem  \ref{main thm1}, the pair $(\mathscr{C}/[\mathscr{F}],\mathscr{X}_{\mathscr{F}})$ is a $2$-exact category. 
		
		Assume $\mho_{\mathscr E}^{2}(\mathscr C)\subset\mathscr C$. Then
		 $(\mathscr{C},\mathscr{X})$ is a Frobenius $2$-exact category by Theorem \ref{n-exact from n-CT}, and  so also is $(\mathscr{C}/[\mathscr{F}],\mathscr{X}_{\mathscr{F}})$ by Theorem \ref{main thm2}.
	\end{proof}	
	
		Objects of $\mathscr F$ need not be $\mathscr X^{\mathrm{vec}}$-projective-injective objects, and so $\mathscr F$ need not be a removable subcategory of $\mathscr C$ with respect to $\mathscr X^{\mathrm{vec}}$. We  consider the following relative construction. Let $$\mathscr X^{\mathrm{rel}}=\{X^{\mr}\in\mathscr X^{\mathrm{vec}}
		\mid d_X^0\text{ is }\mathscr F\text{-monic and }
		d_X^2\text{ is }\mathscr F\text{-epic}\}.$$

		\begin{theorem} \label{main thm 4}
			Let $\mathscr{C}$ be a $2$-cluster tilting subcategory of $\ACM \X$.
			Then $(\mathscr C,\mathscr X^{\mathrm{rel}})$ is a $2$-exact category, and $\mathscr{F}$ is a removable subcategory of $(\mathscr C,\mathscr X^{\mathrm{rel}})$. Moreover,
			$(\mathscr{C}/[\mathscr{F}],\mathscr X^{\mathrm{rel}}_{\mathscr{F}})$ is a  $2$-exact category, where $\mathscr X^{\mathrm{rel}}_{\mathscr{F}}$ is defined in \eqref{new exact structure}. 							 
		\end{theorem}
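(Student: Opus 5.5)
The proof has three parts: show that $(\mathscr C,\mathscr X^{\mathrm{rel}})$ is $2$-exact, show that $\mathscr F$ is removable for it, and then quote Theorem~\ref{main thm1}.

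\emph{The $2$-exact structure.} By Proposition~\ref{ACM X and vect X}, $\mathscr C$ is $2$-cluster tilting in $(\vect\X,\mathscr E^{\mathrm{vec}})$. Theorem~\ref{n-exact from n-CT} therefore makes $(\mathscr C,\mathscr X^{\mathrm{vec}})$ a $2$-exact category. Since $\mathscr F\subset\mathscr L\subset\mathscr C$, Proposition~\ref{rel exact} applied to this category and to $\mathscr F$ shows at once that $(\mathscr C,\mathscr X^{\mathrm{rel}})$ is $2$-exact.

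\emph{Objects of $\mathscr F$ are projective-injective.} Take $G\in\mathscr F$ and an $\mathscr X^{\mathrm{rel}}$-deflation $d$. Since $d$ is $\mathscr F$-epic, $\mathscr C(G,d)$ is surjective, so $G$ is $\mathscr X^{\mathrm{rel}}$-projective. Dually, $\mathscr F$-monicity of inflations makes $G$ an $\mathscr X^{\mathrm{rel}}$-injective. Hence $\mathscr F\subset\mathscr P\cap\mathscr I$ for $\mathscr X^{\mathrm{rel}}$.

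\emph{Removability.} I will check the hypotheses of Proposition~\ref{A sufficient condition} for $n=2$ in $(\mathscr C,\mathscr X^{\mathrm{rel}})$. For $Z\in\mathscr C$, Property (L) (Proposition~\ref{Property (L) and (R) for ACM bundles}) gives
\[
0\to Z\xrightarrow{f^0}F^1\xrightarrow{f^1}F^2\xrightarrow{f^2}F^3\to0
\]
with $f^0$ a left $\mathscr F$-approximation and all $F^i\in\mathscr F$. For condition (a) of Proposition~\ref{A sufficient condition}, I need $F^2,F^3$ to be $\mathscr X^{\mathrm{rel}}$-projective, which holds since they lie in $\mathscr F$. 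I also need weak cokernel properties measured in $\mathscr C$. Lemma~\ref{lemma 4.3} shows the sequence is right $2$-exact in $\ACM\X$, hence in $\mathscr C\subset\ACM\X$: $f^1$ is a weak cokernel of $f^0$, and $f^2$ is a cokernel of $f^1$, in particular a weak cokernel. Condition (b) follows dually from Property (R) and the dual of Lemma~\ref{lemma 4.3}. Proposition~\ref{A sufficient condition} then says $\mathscr F$ is removable for $\mathscr X^{\mathrm{rel}}$.

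The step I expect to be the main obstacle is confirming that Proposition~\ref{A sufficient condition} applies verbatim to $\mathscr X^{\mathrm{rel}}$. Its proof uses only three things: $d_X^0$ is a monomorphism and $d_X^n$ an epimorphism, the weak (co)kernel properties of the conflation, and lifting of maps from the $P^i$ along $\mathscr X$-deflations. All three hold for $\mathscr X^{\mathrm{rel}}$, because its members are $2$-exact sequences in $\mathscr C$ with $\mathscr F$-epic deflations and $\mathscr F$-monic inflations.

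\emph{Conclusion.} Theorem~\ref{main thm1} then gives that $(\mathscr C/[\mathscr F],\mathscr X^{\mathrm{rel}}_{\mathscr F})$ is a $2$-exact category.
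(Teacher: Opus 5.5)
Your proposal is correct and follows the paper's proof essentially step by step. You apply Proposition~\ref{rel exact} to $(\mathscr C,\mathscr X^{\mathrm{vec}})$, observe that objects of $\mathscr F$ are $\mathscr X^{\mathrm{rel}}$-projective-injective, use Properties (L) and (R) with Lemma~\ref{lemma 4.3} and its dual to verify the hypotheses of Proposition~\ref{A sufficient condition} for $n=2$, and conclude with Theorem~\ref{main thm1}. The ``obstacle'' you flag is not a real one: Proposition~\ref{A sufficient condition} is stated for an arbitrary $n$-exact category, so it applies directly once $(\mathscr C,\mathscr X^{\mathrm{rel}})$ is known to be $2$-exact.
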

		\begin{proof}
			By Proposition \ref{rel exact} for the $2$-exact category $(\mathscr{C},\mathscr X^{\mathrm{vec}})$, the pair $(\mathscr C, \mathscr X^{\mathrm{rel}})$ is a $2$-exact category, and  every object of $\mathscr{F}$ is $\mathscr X^{\mathrm{rel}}$-projective-injective. For $X\in\mathscr C$, by Proposition~\ref{Property (L) and (R) for ACM bundles}, there are exact sequences in $\coh\X$
			\[
			0\rightarrow X\xrightarrow{f^0}F^1
			\xrightarrow{f^1}F^2\xrightarrow{f^2}F^3\rightarrow0,
			\]
			\[
			0\longrightarrow F_3\xrightarrow{f_2}F_2
			\xrightarrow{f_1}F_1\xrightarrow{f_0}X\rightarrow0,
			\]
			where all $F^i,F_i$ lie in $\mathscr F$, $f^0$ is a left
			$\mathscr F$-approximation and $f_0$ is a right
			$\mathscr F$-approximation. By Lemma~\ref{lemma 4.3}
			and its dual, these sequences satisfy the conditions in
			Proposition~\ref{A sufficient condition}, with $n=2$. This
			shows that $\mathscr F$ is a removable subcategory of $(\mathscr C,\mathscr X^{\mathrm{rel}})$. The quotient assertion
			now follows from Theorem~\ref{main thm1}.
		\end{proof}
		
			\begin{proposition}\label{relative criterion}
			Let $\mathscr C$ be a $2$-cluster tilting subcategory of $\ACM\X$. Then
			$\mathscr X\subset\mathscr X^{\mathrm{rel}}
			\subset\mathscr X^{\mathrm{vec}}.$ 
			Moreover, $\mathscr X^{\mathrm{rel}}=\mathscr X$ if and only if 
			$\min\{p_1,p_2,p_3\}=2$.
		\end{proposition}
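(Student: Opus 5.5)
The plan is to first prove the two inclusions, and then reduce the equality question to a comparison between ``$\mathscr F$-monic/epic'' and ``$\mathscr L$-monic/epic''.

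\emph{Inclusions.} The inclusion $\mathscr X^{\mathrm{rel}}\subset\mathscr X^{\mathrm{vec}}$ holds by definition. For $\mathscr X\subset\mathscr X^{\mathrm{rel}}$, let $X^{\mr}\in\mathscr X$. It is homotopy equivalent to an $\mathscr E$-acyclic complex with terms in $\mathscr C$, which is in particular $\mathscr E^{\mathrm{vec}}$-acyclic; hence $X^{\mr}\in\mathscr X^{\mathrm{vec}}$, as already noted before Theorem~\ref{main thm 3}. Every $F\in\mathscr F\subset\mathscr L$ is $\mathscr E$-projective-injective in $\ACM\X$. Therefore $\Hom_{\X}(-,F)$ and $\Hom_{\X}(F,-)$ are exact on $\mathscr E$-acyclic complexes, and this exactness is invariant under homotopy equivalence, exactly as in the proof of Theorem~\ref{n-exact from n-CT}. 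This gives that $d_X^0$ is $\mathscr F$-monic and $d_X^2$ is $\mathscr F$-epic.

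\emph{Characterizing $\mathscr X$ inside $\mathscr X^{\mathrm{vec}}$.} I would show that, for $X^{\mr}\in\mathscr X^{\mathrm{vec}}$, we have $X^{\mr}\in\mathscr X$ if and only if $d_X^0$ is $\mathscr L$-monic and $d_X^2$ is $\mathscr L$-epic. Equivalently, $\mathscr X$ is the relative structure $\mathscr X^{\mathrm{vec},\mathrm{rel}}$ for $\mathscr L$ in the sense of Proposition~\ref{rel exact}. To see this, pass to an $\mathscr E^{\mathrm{vec}}$-acyclic representative
\[
A\to Y^1\to Y^2\to C,\qquad K=\operatorname{Im}(Y^1\to Y^2).
\]
The long exact sequences for $\Hom(-,P)$ and $\Hom(P,-)$, $P\in\mathscr L$, together with $\Ext^1(\mathscr C,\mathscr L)=0$ and $\Ext^1(\mathscr L,\mathscr C)=0$, show that the $\mathscr L$-monic/epic conditions are equivalent to $\Ext^1(K,P)=0$ and $\Ext^1(P,K)=0$. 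By \eqref{ACM bundle eq.} these say exactly that $K\in\ACM\X$, i.e.\ that the complex is $\mathscr E$-acyclic.

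\emph{The case $\min\{p_1,p_2,p_3\}=2$.} Say $p_1=2$. It suffices to show that $\mathscr F$-monic implies $\mathscr L$-monic for $d_X^0$ (the epic statement is dual, via the duality $\theta$ of Proposition~\ref{Property (L) and (R) for ACM bundles}). Every persistent $P=\OO(\x)$ then has $\lambda_1=0$. I would use the short exact sequence
\[
0\to P\xrightarrow{X_1}P(\x_1)\to Q\to 0,
\]
in which $P(\x_1)$ is fading, together with a companion map $P(\x_1)\to P(2\x_1)=P(\c)$. The idea is to lift a given $h\colon A\to P$: first extend $X_1h$ over $d_X^0$ using $\mathscr F$-monicity, and then correct the lift using the weak cokernel property of $X^{\mr}$ and $\Ext$-vanishing between ACM bundles and line bundles. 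This lifting step is the main obstacle I expect. If the direct lift fails, I would instead compare $\Ext^1(K,P)$ with $\Ext^1(K,P(\x_1))=0$, using that the cokernel $Q$ is supported on the divisor $X_1=0$. The aim is to show that the connecting map $\Hom(K,Q)\to\Ext^1(K,P)$ is surjective and is killed once $K$ is $\mathscr F$-orthogonal.

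\emph{The case $p_1,p_2,p_3\ge3$.} Here I would produce an explicit element of $\mathscr X^{\mathrm{rel}}\setminus\mathscr X$ in a concrete $2$-cluster tilting $\mathscr C$. The natural candidates are built from $2$-extension bundles and persistent line bundles $L$ with all $\lambda_i\le p_i-3$; for such $L$, both $L(\x_i)$ and $L(-\x_i)$ are still persistent. I would build a vector bundle sequence $0\to A\to Y^1\to Y^2\to C\to0$ whose middle image $K$ has $\Ext^1(K,L)\ne0$ for such a persistent $L$, but $\Ext^1(K,F)=0$ and $\Ext^1(F,K)=0$ for all fading $F$. A likely choice is to take $K$ to be a Koszul-type kernel of a map into a persistent line bundle, and to check the vanishing conditions with \eqref{extension spaces between line bundles} and Serre duality, following Example~\ref{Example 1}.
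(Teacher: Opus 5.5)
Your inclusions, and your reformulation of membership in $\mathscr X$ for an $\mathscr E^{\mathrm{vec}}$-acyclic representative as $K=\Im d^1\in\ACM\X$, are correct.

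\textbf{The case $\min\{p_1,p_2,p_3\}=2$.} Here there is a genuine gap. You reduce to ``$\mathscr F$-monic implies $\mathscr L$-monic for $d_X^0$'' (and dually for $d_X^2$). Your sketches for this step use only the hypothesis on $d^0$, together with the weak cokernel property and $\Ext$-vanishing against line bundles. No such argument can work, because the claim is false. Consider the Koszul complex
\[
\OO\xrightarrow{d^0}\bigoplus_{i=1}^{3}\OO(\x_i)\xrightarrow{d^1}\bigoplus_{1\le i<j\le3}\OO(\x_i+\x_j)\xrightarrow{d^2}\OO(\x_1+\x_2+\x_3),
\]
which is $\mathscr E^{\mathrm{vec}}$-acyclic with all terms in $\mathscr L\subset\mathscr C$. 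Put $Q=R/(X_1,X_2,X_3)$. Then $\operatorname{Coker}\Hom_{\X}(d^0,\OO(\vec a))\simeq Q_{\vec a}$, which is nonzero only for $\vec a=r\x_4$ with $0\le r<p_4$. These line bundles are persistent for every weight type, so $d^0$ is $\mathscr F$-monic. Yet $Q_0=\mathbf{k}\neq0$, so $d^0$ is not $\mathscr L$-monic. In terms of $K$: $\Ext^1_{\X}(K,F)=0$ for all $F\in\mathscr F$ does not force $\Ext^1_{\X}(K,\OO)=0$. Hence neither your lifting argument nor the comparison through $0\to P\xrightarrow{X_1}P(\x_1)\to\cdots$ can succeed from the hypothesis on $d^0$ alone; the two hypotheses must be combined.

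The missing idea is Auslander--Reiten--Serre duality. If $p_j=2$ for some $j\le3$, then $L(\w)$ is fading for every persistent $L$, since its $\x_j$-coefficient becomes $p_j-1$. So $\Ext^1_{\X}(L,K)\simeq D\Ext^1_{\X}(K,L(\w))=0$ by the $\mathscr F$-monicity of $d^0$. For fading $F$, $\Ext^1_{\X}(F,K)=0$ by the $\mathscr F$-epicity of $d^2$. Thus $\Ext^1_{\X}(L,K)=0$ for all line bundles $L$, and $K\in\ACM\X$ follows from \eqref{ACM bundle eq.}. Only one of your two $\mathscr L$-conditions is ever needed.

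\textbf{The case $p_1,p_2,p_3\ge3$.} You propose a witness in ``a concrete'' $2$-cluster tilting subcategory, possibly built from $2$-extension bundles. That does not prove the statement, which concerns an arbitrary $\mathscr C$, and $2$-extension bundles need not lie in a given $\mathscr C$. You need a complex with terms in $\mathscr L$, which is contained in every $\mathscr C$.

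The same Koszul complex works; it is the witness the paper uses. As shown above, $d^0$ is $\mathscr F$-monic. Moreover, $\operatorname{Coker}\Hom_{\X}(\OO(\vec a),d^2)\simeq Q_{\x_1+\x_2+\x_3-\vec a}$ is nonzero only for $\vec a=\x_1+\x_2+\x_3-r\x_4$. Such $\OO(\vec a)$ has $\x_1,\x_2,\x_3$-coefficients $(1,1,1)$, so it is persistent when all $p_i\ge3$, and $d^2$ is $\mathscr F$-epic. Hence the complex lies in $\mathscr X^{\mathrm{rel}}$. But $\vec a=\x_1+\x_2+\x_3$ gives $Q_0\neq0$, so $d^2$ is not $\mathscr L$-epic and the complex is not in $\mathscr X$.

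Also, your remark that $L(-\x_i)$ stays persistent when $\lambda_i\le p_i-3$ fails for $\lambda_i=0$.
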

		\begin{proof}
			Every line bundle is projective and injective for the exact structure $\mathscr E$ on $\ACM\X$.
			Hence the first and last differentials of any $\mathscr E$-acyclic
			complex with terms in $\mathscr C$ are $\mathscr L$-monic and
			$\mathscr L$-epic, respectively. These properties are preserved
			under homotopy equivalence. Indeed, applying $\Hom(-,L)$ and
			$\Hom(L,-)$ to homotopy equivalent complexes yields complexes
			with isomorphic cohomology for each $L\in\mathscr L$. Since $\mathscr{F}\subset\mathscr L$, the asserted inclusions follow.
		 
		 	Suppose first that $p_j=2$ for some $1\le j\le3$. We claim that every line		 	
		 	bundle $L$ satisfies 
		 	\begin{align}\label{L fading or Lw}
		 		L\in\mathscr F\quad \text{or}\quad L(\w)\in\mathscr F.
		 	\end{align}
		 	Write $L=\OO(\x)$, where
		 	$\x=\sum_{i=1}^3\lambda_i\x_i+a\x_4$ with
		 	$0\le\lambda_i<p_i$ and $a\in \Z$. Assume $L\notin\mathscr F$. Then $L$ is
		 	persistent, so $\lambda_j=0$. Since
		 	$\w=(p_4-1)\x_4-\sum_{i=1}^3\x_i$, the coefficient of $\x_j$
		 	in the normal form of $\x+\w$ is $p_j-1=1$. Hence 
		 	$L(\w)\in\mathscr F$.
		 
			By the homotopy invariance established above and the closure
			of $\mathscr X$ under homotopy equivalences, the proof of
			$\mathscr X^{\mathrm{rel}}\subset\mathscr X$ reduces to showing
			that every $\mathscr E^{\mathrm{vec}}$-acyclic complex
			\[
			X^{\mr}\colon\quad
			0\longrightarrow X^0\xrightarrow{d_X^0}X^1
			\xrightarrow{d_X^1}X^2\xrightarrow{d_X^2}X^3
			\longrightarrow0,
			\]
			with all terms in $\mathscr C$, $d_X^0$ $\mathscr F$-monic,
			and $d_X^2$ $\mathscr F$-epic, belongs to $\mathscr X$.
		 	 Put $K=\Im d_X^1$.   For any fading line bundle $F$, 
		 	  applying $\Hom_{\X}(-,F)$ and $\Hom_{\X}(F,-)$ to the two short exact sequences
		 	  \begin{align}\label{two short exact sequences}
		 	  	0\to X^0\xrightarrow{d_X^0}X^1 \to K \to0\quad\text{and}\quad0\to K\xrightarrow{}X^2 \xrightarrow{d_X^2} X^3 \to0,
		 	  \end{align}  this gives, respectively, two exact sequences
		 	\begin{align*}
		 		\Hom_{\X}(X^1,F)&\xrightarrow{(d_X^0)^\ast} \Hom_{\X}(X^0,F) \to \Ext_{\X}^1(K,F) \to 0,\\
		 		\Hom_{\X}(F,X^2)&\xrightarrow{(d_X^2)_\ast} \Hom_{\X}(F,X^3) \to \Ext_{\X}^1(F,K) \to 0.
		 	\end{align*}
		 Since $d_X^0$ is $\mathscr F$-monic
		 and $d_X^2$ is $\mathscr F$-epic, both the induced morphisms $(d_X^0)^\ast$ and $(d_X^2)_\ast$ are epimorphisms. Thus for any line bundle $L\in \mathscr{F}$, we have
		 \begin{align}\label{check ACM}
		 	\Ext_{\X}^1(K,L)=0=\Ext_{\X}^1(L,K).
		 \end{align}
		 
		 Let $L$ be a line bundle. If $L$ is fading, then $\Ext_{\X}^1(L,K)=0$ by \eqref{check ACM}. Otherwise $L(\w)$ is fading by \eqref{L fading or Lw}, and then by Auslander--Reiten--Serre duality, we have
		 \begin{align}\label{check ACM L}
		 	\Ext_{\X}^1(L,K)\simeq D\Ext_{\X}^1(K,L(\w))=0.
		 \end{align}
		 Thus $K$ is an ACM bundle by  \eqref{ACM bundle eq.}. Both the short exact sequences \eqref{two short exact sequences} belong to $\mathscr E$, so $X^{\mr}\in \mathscr X$.

		 Conversely, assume that $p_1,p_2,p_3\ge3$. We construct a
		 sequence in $\mathscr X^{\mathrm{rel}}$ which does not belong
		 to $\mathscr X$.	
		 	Put $Q:=R/(X_1,X_2,X_3)$ and
		 $\vec t:=\x_1+\x_2+\x_3$. Consider the Koszul complex
		 \begin{equation*}\label{relative Koszul}
		 			 	P^{\mr}\colon\quad
		 			 	0\rightarrow\OO\xrightarrow{d^0}\bigoplus_{1\le i\le 3}\OO(\x_i)
		 			 	\xrightarrow{d^1}\bigoplus_{1\le i<j\le 3}\OO(\x_i+\x_j)\xrightarrow{d^2}\OO(\x_1+\x_2+\x_3)
		 			 	\rightarrow0,
		 			 \end{equation*}
		 	in $\coh \X$, where
		 			 \[
		 			 d^0=
		 			 \begin{bmatrix}
		 				 	X_1\\
		 				 	X_2\\
		 				 	X_3
		 				 \end{bmatrix},
		 			 \quad
		 			 d^1=
		 			 \begin{bmatrix}
		 				 	X_2 & -X_1 & 0\\
		 				 	-X_3 & 0 & X_1\\
		 				 	0 & X_3 & -X_2
		 				 \end{bmatrix},
		 			 \quad
		 			 d^2=
		 			 \begin{bmatrix}
		 				 	X_3 & X_2 & X_1
		 				 \end{bmatrix}.
		 			 \]			 		 		 
		By Lemma \ref{kernels of epimorphisms in vect X}, we have $\ker d^2\in\vect\X$. 
		 Consequently, $P^{\mr}$ is $\mathscr E^{\mathrm{vec}}$-acyclic. 		 
	  For any $\vec a\in\L$, the identification
		 $\Hom_{\X}(\OO(\vec a),\OO(\vec b))=R_{\vec b-\vec a}$ by \eqref{extension spaces between line bundles}, 
		 gives
		 \begin{align*}
		 	\operatorname{Coker}\Hom_{\X}(d^0,\OO(\vec a))
		 	&\simeq
		 	R_{\vec a}\Big/\sum_{i=1}^3X_iR_{\vec a-\x_i}
		 	=Q_{\vec a},\\
		 	\operatorname{Coker}\Hom_{\X}(\OO(\vec a),d^2)
		 	&\simeq
		 	R_{\vec t-\vec a}\Big/
		 	\sum_{i=1}^3X_iR_{\vec t-\vec a-\x_i}
		 	=Q_{\vec t-\vec a}.
		 \end{align*}
		 The nonzero homogeneous components of $Q$ occur precisely
		 in degrees $r\x_4$ with $0\le r<p_4$. Thus the first
		 cokernel can be nonzero only when
		 $\vec a=r\x_4$, and the second only when
		 $\vec a=\vec t-r\x_4$.
		 The coefficients of $\x_1,\x_2,\x_3$ in the normal form of $r\x_4$ and $\vec t-r\x_4$ are $(0,0,0)$ and $(1,1,1)$, respectively.
		 Since $p_1,p_2,p_3\ge3$, both $\OO(r\x_4)$ and $\OO(\vec t-r\x_4)$ are
		 persistent. Both cokernels therefore vanish for every
		 fading line bundle. It follows that $d^0$ is
		 $\mathscr F$-monic and $d^2$ is $\mathscr F$-epic, so
		 $P^{\mr}\in\mathscr X^{\mathrm{rel}}$.
		 
		 On the other hand, taking $\vec a=\vec t$, we have
		 \[
		 \operatorname{Coker}
		 \Hom_{\X}(\OO(\vec t),d^2)
		 \simeq Q_0\simeq\mathbf k\ne0.
		 \]
		 Thus $d^2$ is not $\mathscr L$-epic.
		 As observed at the beginning of the proof, every sequence
		 in $\mathscr X$ has an $\mathscr L$-epic last differential.
		 Hence $P^{\mr}\notin\mathscr X$, and therefore
		 $\mathscr X\subsetneq\mathscr X^{\mathrm{rel}}$.
		\end{proof}
				
				In the rest, we show how some of the main constructions work out in some particular and simple examples.
	
		\begin{example}\label{Example 1} Let $R$ be an $\L$-graded BP singularity of type $(2,2,2,2)$, and 
			 $\X$ the corresponding GL projective plane. The Auslander--Reiten quiver of $\ACM \X$ is depicted below; see \cite[Example 4.47]{HIMO}. Here, we write $\x\in\L$ for the corresponding line bundle
			 $\OO(\x)$, and set 
			 $E_k:=E\langle0\rangle(k\x_4)$  for $k\in \Z$.
			
		\begin{figure}[H]
			\centering
			\resizebox{0.95\textwidth}{!}{%
				\large
				\begin{xy}
					0;<4.5pt,0pt>:<0pt,14pt>::
					(-1,0)   *+{\cdots}             ="dl",
					(3,0)   *+{E_0} 					 ="e0",
					(35,0)  *+{E_1} 				 ="e1",
					(67,0)  *+{E_2}  			     ="e2",
					(99,0)  *+{E_3} 				 ="e3",
					(103,0) *+{\cdots}               ="dr",
					(19, 7) *+{\w+\x_3}              ="a1",
					(19, 5) *+{\w+\x_2}              ="a2",
					(19, 3) *+{\w+\x_1}              ="a3",
					(19, 1) *+{-\x_4}             	 ="a4",
					(19,-1) *+{-\x_3}           	 ="a5",
					(19,-3) *+{-\x_2}           	 ="a6",
					(19,-5) *+{-\x_1}          		 ="a7",
					(19,-7) *+{\w+\x_4}              ="a8",
					(51, 7) *+{\w+\x_3+\x_4}         ="b1",
					(51, 5) *+{\w+\x_2+\x_4}         ="b2",
					(51, 3) *+{\w+\x_1+\x_4}         ="b3",
					(51, 1) *+{0}                 	 ="b4",
					(51,-1) *+{\x_4-\x_3}       	 ="b5",
					(51,-3) *+{\x_4-\x_2}            ="b6",
					(51,-5) *+{\x_4-\x_1}            ="b7",
					(51,-7) *+{\w+\c}                ="b8",
					(83, 7) *+{\w+\x_3+\c}           ="c1",
					(83, 5) *+{\w+\x_2+\c}           ="c2",
					(83, 3) *+{\w+\x_1+\c}           ="c3",
					(83, 1) *+{\x_4}                 ="c4",
					(83,-1) *+{\x_3}                 ="c5",
					(83,-3) *+{\x_2}                 ="c6",
					(83,-5) *+{\x_1}                 ="c7",
					(83,-7) *+{\w+\x_4+\c}           ="c8",
					"e0", {\ar"a1"},
					"e0", {\ar"a2"},
					"e0", {\ar"a3"},
					"e0", {\ar"a4"},
					"e0", {\ar"a5"},
					"e0", {\ar"a6"},
					"e0", {\ar"a7"},
					"e0", {\ar"a8"},
					"a1", {\ar"e1"},
					"a2", {\ar"e1"},
					"a3", {\ar"e1"},
					"a4", {\ar"e1"},
					"a5", {\ar"e1"},
					"a6", {\ar"e1"},
					"a7", {\ar"e1"},
					"a8", {\ar"e1"},
					"e1", {\ar"b1"},
					"e1", {\ar"b2"},
					"e1", {\ar"b3"},
					"e1", {\ar"b4"},
					"e1", {\ar"b5"},
					"e1", {\ar"b6"},
					"e1", {\ar"b7"},
					"e1", {\ar"b8"},
					"b1", {\ar"e2"},
					"b2", {\ar"e2"},
					"b3", {\ar"e2"},
					"b4", {\ar"e2"},
					"b5", {\ar"e2"},
					"b6", {\ar"e2"},
					"b7", {\ar"e2"},
					"b8", {\ar"e2"},
					"e2", {\ar"c1"},
					"e2", {\ar"c2"},
					"e2", {\ar"c3"},
					"e2", {\ar"c4"},
					"e2", {\ar"c5"},
					"e2", {\ar"c6"},
					"e2", {\ar"c7"},
					"e2", {\ar"c8"},
					"c1", {\ar"e3"},
					"c2", {\ar"e3"},
					"c3", {\ar"e3"},
					"c4", {\ar"e3"},
					"c5", {\ar"e3"},
					"c6", {\ar"e3"},
					"c7", {\ar"e3"},
					"c8", {\ar"e3"},
					"e1", {\ar@{--} "e0"},
					"e2", {\ar@{--} "e1"},
					"e3", {\ar@{--} "e2"},
				\end{xy}%
			}\caption{The Auslander--Reiten quiver of $\ACM \X$ for type $(2,2,2,2)$}
		\end{figure}\noindent
			In this case, the persistent line bundles are precisely
		$\OO(a\x_4)$, $a\in\Z$, whereas
		\[
		\mathscr F
		=
		\add\{
		\OO(\sum_{i=1}^{3}\lambda_i\x_i+a\x_4)
		\ \mid \lambda_i\in\{0,1\},\
		(\lambda_1,\lambda_2,\lambda_3)\ne(0,0,0),\ a\in\Z\}.
		\]

		By \cite[Example~4.54]{HIMO}, the subcategory
		\[
		\mathscr C
		=
		\add\{E_{2k},\,\OO(\x)\mid k\in\Z,\ \x\in\L\}
		\]
		 is a $2$-cluster tilting subcategory of $\ACM\X$. 	One can compute the Auslander--Reiten quiver of $\mathscr{C}$  from that of  $\ACM\X$; see Figure \ref{The quiver of C}.
		\begin{figure}
			\centering
			\resizebox{0.95\textwidth}{!}{%
				\large
				\begin{xy}
					0;<4.5pt,0pt>:<0pt,15pt>::
					(-21, 7) *+{\w+\x_3-\x_4} ="p1",
					(-21, 5) *+{\w+\x_2-\x_4} ="p2",
					(-21, 3) *+{\w+\x_1-\x_4} ="p3",
					(-21, 1) *+{-\c}           ="p4",
					(-21,-1) *+{-\x_3-\x_4}   ="p5",
					(-21,-3) *+{-\x_2-\x_4}   ="p6",
					(-21,-5) *+{-\x_1-\x_4}   ="p7",
					(-21,-7) *+{\w}            ="p8",
					(-30,0) *+{\cdots} ="dl",
					(3,0)   *+{E_0}      ="e0",
					(99,0)  *+{E_2}    ="e2",
					(105,0) *+{\cdots} ="dr",
					(27, 7) *+{\w+\x_3} ="a1",
					(27, 5) *+{\w+\x_2} ="a2",
					(27, 3) *+{\w+\x_1} ="a3",
					(27, 1) *+{-\x_4}   ="a4",
					(27,-1) *+{-\x_3}   ="a5",
					(27,-3) *+{-\x_2}   ="a6",
					(27,-5) *+{-\x_1}   ="a7",
					(27,-7) *+{\w+\x_4} ="a8",
					(75, 7) *+{\w+\x_3+\x_4} ="b1",
					(75, 5) *+{\w+\x_2+\x_4} ="b2",
					(75, 3) *+{\w+\x_1+\x_4} ="b3",
					(75, 1) *+{0}            ="b4",
					(75,-1) *+{\x_4-\x_3}    ="b5",
					(75,-3) *+{\x_4-\x_2}    ="b6",
					(75,-5) *+{\x_4-\x_1}    ="b7",
					(75,-7) *+{\w+\c}        ="b8",
					"p1", {\ar"e0"},
					"p2", {\ar"e0"},
					"p3", {\ar"e0"},
					"p4", {\ar"e0"},
					"p5", {\ar"e0"},
					"p6", {\ar"e0"},
					"p7", {\ar"e0"},
					"p8", {\ar"e0"},
					"e0", {\ar"a1"},
					"e0", {\ar"a2"},
					"e0", {\ar"a3"},
					"e0", {\ar"a4"},
					"e0", {\ar"a5"},
					"e0", {\ar"a6"},
					"e0", {\ar"a7"},
					"e0", {\ar"a8"},
					"a1", {\ar"b1"},
					"a1", {\ar"b6"},
					"a1", {\ar"b7"},
					"a1", {\ar"b8"},
					"a2", {\ar"b2"},
					"a2", {\ar"b5"},
					"a2", {\ar"b7"},
					"a2", {\ar"b8"},
					"a3", {\ar"b3"},
					"a3", {\ar"b5"},
					"a3", {\ar"b6"},
					"a3", {\ar"b8"},
					"a4", {\ar"b4"},
					"a4", {\ar"b5"},
					"a4", {\ar"b6"},
					"a4", {\ar"b7"},
					"a5", {\ar"b2"},
					"a5", {\ar"b3"},
					"a5", {\ar"b4"},
					"a5", {\ar"b5"},
					"a6", {\ar"b1"},
					"a6", {\ar"b3"},
					"a6", {\ar"b4"},
					"a6", {\ar"b6"},
					"a7", {\ar"b1"},
					"a7", {\ar"b2"},
					"a7", {\ar"b4"},
					"a7", {\ar"b7"},
					"a8", {\ar"b1"},
					"a8", {\ar"b2"},
					"a8", {\ar"b3"},
					"a8", {\ar"b8"},
					"b1", {\ar"e2"},
					"b2", {\ar"e2"},
					"b3", {\ar"e2"},
					"b4", {\ar"e2"},
					"b5", {\ar"e2"},
					"b6", {\ar"e2"},
					"b7", {\ar"e2"},
					"b8", {\ar"e2"},
				\end{xy}%
			}
			\caption{The Auslander--Reiten quiver of  
				$\mathscr C$}
				\label{The quiver of C}
		\end{figure}	\noindent
			
		Let $\mathscr{E}$ be the natural exact structure on $\ACM\X$ inherited  from $\coh\X$, and $\mathscr{X}$ be the induced $2$-exact structure on $\mathscr{C}$ as defined in Theorem \ref{n-exact from n-CT}. 			
		By \cite[Theorem~4.4 and Proposition~4.6]{CRW},
		there are $\mathscr E$-conflations
		\begin{align}\label{E injective hull}
			0\longrightarrow E_k\longrightarrow I_k
			\longrightarrow E_{k+1}\longrightarrow0,
		\end{align}
		where
		\[
		I_k=
		\bigoplus_{1\le i\le 4}\OO(k\x_4-\x_i)
		\oplus
		\bigoplus_{1\le i\le 4}\OO(k\x_4+\w+\x_i).
		\]
		Consequently, we have
		$\mho_{\mathscr E}^{2}(E_{2k})=E_{2k+2}$.
		Since the remaining generators of $\mathscr C$ are
		projective and injective, it follows that
		$\mho_{\mathscr E}^{2}(\mathscr C)\subset\mathscr C.$ By Corollary~\ref{F removable of ACM X},
		$\mathscr F$ is removable of $(\ACM\X,\mathscr E)$.  		
		By Theorem~\ref{Compatibility with cluster tilting}, 
		$$
		\mathscr C/[\mathscr F]
		=
		\add\{{E}_{2k},\,
		\OO(a\x_4)
		\mid k,a\in\Z\}
		$$
		is a $2$-cluster tilting subcategory of the Frobenius exact
		category $\ACM\X/[\mathscr F]$.
		Its induced $2$-exact structure is $\mathscr X_{\mathscr F}$.
		Moreover, both $(\mathscr C,\mathscr X)$ and
		$(\mathscr C/[\mathscr F],\mathscr X_{\mathscr F})$
		are Frobenius $2$-exact categories, whose subcategories of projective-injective objects
		are $\mathscr L$ and
		$
		\mathscr L/[\mathscr F]
		=
		\add\{{\OO(a\x_4)}\mid a\in\Z\},
		$
		respectively.			
		\begin{figure}[H]
			\centering
			\resizebox{0.92\textwidth}{!}{%
				\large
				\begin{xy}
					0;<4.5pt,0pt>:<0pt,12pt>::
					(0,0)  *+{\cdots} ="l",
					(10,0)   *+{E_{-2}} ="em2",
					(20,4)  *+{-3\x_4} ="lm3",
					(30,4)  *+{-\c}    ="lm2",
					(40,0)  *+{E_0}      ="e0",
					(50,4)  *+{-\x_4}  ="lm1",
					(60,4)  *+{0}      ="l0",
					(70,0)  *+{E_2}    ="e2",
					(80,4)  *+{\x_4}   ="l1",
					(90,4)  *+{\c}     ="l2",
					(100,0) *+{E_4}    ="e4",
					(110,0) *+{\cdots} ="r",
					"l",   {\ar"em2"},
					"em2", {\ar"lm3"},
					"lm3", {\ar"lm2"},
					"lm2", {\ar"e0"},
					"e0",  {\ar"lm1"},
					"lm1", {\ar"l0"},
					"l0",  {\ar"e2"},
					"e2",  {\ar"l1"},
					"l1",  {\ar"l2"},
					"l2",  {\ar"e4"},
					"e4",  {\ar"r"},
				\end{xy}%
			}
			\caption{The Auslander--Reiten quiver of $\mathscr C/[\mathscr F]$.}
		\end{figure}			
			
		For a concrete $\mathscr X$-conflation, splice the two $\mathscr E$-conflations of the form \eqref{E injective hull}
		 with indices $2k$ and $2k+1$. This gives the
		$\mathscr X$-conflation
		$$
		E_{2k}\longrightarrow I_{2k}\longrightarrow I_{2k+1}
		\longrightarrow E_{2k+2}.
		$$
		The only persistent summand of $I_k$ is
		$\OO((k-1)\x_4)$. Hence its image in the quotient $\mathscr C/[\mathscr F]$ is
		isomorphic to the $\mathscr X_{\mathscr F}$-conflation
		\[
		{E}_{2k}
		\longrightarrow
		{\OO(k\c-\x_4)}
		\longrightarrow
		{\OO(k\c)}
		\longrightarrow
		{E}_{2k+2}.
		\]

		We note that $\mathscr X^{\mathrm{vec}}$ is strictly larger than $\mathscr X$.
		Indeed, the defining sequence
		\[
		0\longrightarrow\OO(\w)\xrightarrow{\alpha}
		E_0\longrightarrow
		\bigoplus_{1\le i\le 4}\OO(-\x_i)
		\longrightarrow\OO\longrightarrow0
		\]
		belongs to $\mathscr X^{\mathrm{vec}}$ but not to
		$\mathscr X$. If this sequence belonged to $\mathscr X$, injectivity of $\OO(\w)$ would force $\alpha$ to split, contradicting the indecomposability of $E_0$.
		\end{example}
		
		\begin{example}
			Let $\X$ be a GL projective plane of type $(2,2,2,4)$. The Auslander--Reiten quiver of $\ACM \X$ is depicted in Figure \ref{The quiver of ACM X(2,2,2,4)}; see \cite{CRW2,HIMO} for more details. 			
				In this case, the persistent line bundles are precisely
			$\OO(a\x_4)$, $a\in\Z$, whereas
			\[
			\mathscr F
			=
			\add\{
			\OO(\sum_{i=1}^{3}\lambda_i\x_i+a\x_4)
			\ \mid \lambda_i\in\{0,1\},\
			(\lambda_1,\lambda_2,\lambda_3)\ne(0,0,0),\ a\in\Z\}.
			\]		
			\begin{figure}
				\centering
				\resizebox{0.95\textwidth}{!}{%
					\begin{xy} 0;<18pt,0pt>:<0pt,20pt>::
						(15,4) *+{\dots},
						(16,8) *+{\bullet} ="42",
						(16,4) *+{\cdot} ="23",
						(18,6) *+{\langle 2,-3\rangle} ="33",
						(20,8) *+{(-\c)} ="43",
						(16,0) *+{\bullet} ="04",
						(18,2) *+{\langle 0,-2\rangle} ="14",
						(20,4) *+{\langle 1,-2\rangle} ="24",
						(22,6) *+{\langle 2,-2\rangle} ="34",
						(24,8) *+{(-3\x_4)} ="44",
						(20,0) *+{(\w-\x_4)} ="05",
						(22,2) *+{\langle 0,-1\rangle} ="15",
						(24,4) *+{\langle 1,-1\rangle} ="25",
						(26,6) *+{\langle 2,-1\rangle} ="35",
						(28,8) *+{(-2\x_4)} ="45",
						(24,0) *+{(\w)} ="06",
						(26,2) *+{\langle 0,0\rangle} ="16",
						(28,4) *+{\langle 1,0\rangle} ="26",
						(30,6) *+{\langle 2,0\rangle} ="36",
						(32,8) *+{(-\x_4)} ="46",
						(28,0) *+{(\w+\x_4)} ="07",
						(30,2) *+{\langle 0,1\rangle} ="17",
						(32,4) *+{\langle 1,1\rangle} ="27",
						(34,6) *+{\langle 2,1\rangle} ="37",
						(36,8) *+{(0)} ="47",
						(32,0) *+{(\w+2\x_4)} ="08",
						(34,2) *+{\langle 0,2\rangle} ="18",
						(36,4) *+{\langle 1,2\rangle} ="28",
						(38,6) *+{\langle 2,2\rangle} ="38",
						(40,8) *+{\bullet} ="48",
						(36,0) *+{(\w+3\x_4)} ="09",
						(38,2) *+{\langle 0,3\rangle} ="19",
						(40,4) *+{\cdot} ="29",
						(40,0) *+{\bullet} ="010",
						(41,4) *+{\dots},
						"42", {\ar "33"},
						"23", {\ar "33"},
						"33", {\ar "24"},
						"33", {\ar "43"},
						"23", {\ar "14"},
						"43", {\ar "34"},
						"04", {\ar "14"},
						"14", {\ar "24"},
						"34", {\ar "44"},
						"14", {\ar "05"},
						"24", {\ar "15"},
						"24", {\ar "34"},
						"34", {\ar "25"},
						"44", {\ar "35"},
						"05", {\ar "15"},
						"15", {\ar "25"},
						"35", {\ar "45"},
						"35", {\ar "26"},
						"15", {\ar "06"},
						"25", {\ar "16"},
						"25", {\ar "35"},
						"45", {\ar "36"},
						"06", {\ar "16"},
						"16", {\ar "26"},
						"36", {\ar "46"},
						"36", {\ar "27"},
						"16", {\ar "07"},
						"26", {\ar "17"},
						"26", {\ar "36"},
						"46", {\ar "37"},
						"07", {\ar "17"},
						"17", {\ar "27"},
						"37", {\ar "47"},
						"37", {\ar "28"},
						"27", {\ar "37"},
						"17", {\ar "08"},
						"27", {\ar "18"},
						"47", {\ar "38"},
						"08", {\ar "18"},
						"18", {\ar "28"},
						"28", {\ar "38"},
						"38", {\ar "48"},
						"38", {\ar "29"},
						"18", {\ar "09"},
						"28", {\ar "19"},
						"09", {\ar "19"},
						"19", {\ar "29"},
						"19", {\ar "010"},
						(20,8.8) *+{-\c},
						(20,9.6) *+{-\x_2-\x_3},
						(20,10.4) *+{-\x_1-\x_3},
						(20,11.2) *+{-\x_1-\x_2},
						(24,8.8) *+{-3\x_4},
						(24,9.6) *+{\x_2-\x_3-3\x_4},
						(24,10.4) *+{\x_1-\x_3-3\x_4},
						(24,11.2) *+{\x_1-\x_2-3\x_4},
						(28,8.8) *+{-2\x_4},
						(28,9.6) *+{\x_2-\x_3-2\x_4},
						(28,10.4) *+{\x_1-\x_3-2\x_4},
						(28,11.2) *+{\x_1-\x_2-2\x_4},
						(32,8.8) *+{-\x_4},
						(32,9.6) *+{\x_2-\x_3-\x_4},
						(32,10.4) *+{\x_1-\x_3-\x_4},
						(32,11.2) *+{\x_1-\x_2-\x_4},
						(36,8.8) *+{0},
						(36,9.6) *+{\x_2-\x_3},
						(36,10.4) *+{\x_1-\x_3},
						(36,11.2) *+{\x_1-\x_2},
						(20,-0.8) *+{-\x_3-2\x_4},
						(20,-1.6) *+{-\x_2-2\x_4},
						(20,-2.4) *+{-\x_1-2\x_4},
						(20,-3.2) *+{\w-\x_4},
						(24,-0.8) *+{-\x_3-\x_4},
						(24,-1.6) *+{-\x_2-\x_4},
						(24,-2.4) *+{-\x_1-\x_4},
						(24,-3.2) *+{\w},
						(28,-0.8) *+{-\x_3},
						(28,-1.6) *+{-\x_2},
						(28,-2.4) *+{-\x_1},
						(28,-3.2) *+{\w+\x_4},
						(32,-0.8) *+{-\x_3+\x_4},
						(32,-1.6) *+{-\x_2+\x_4},
						(32,-2.4) *+{-\x_1+\x_4},
						(32,-3.2) *+{\w+2\x_4},
						(36,-0.8) *+{-\x_3+2\x_4},
						(36,-1.6) *+{-\x_2+2\x_4},
						(36,-2.4) *+{-\x_1+2\x_4},
						(36,-3.2) *+{\w+3\x_4},
						"34", {\ar@{--} "33"},
						"35", {\ar@{--} "34"},
						"36", {\ar@{--} "35"},
						"37", {\ar@{--} "36"},
						"38", {\ar@{--} "37"},
						"24", {\ar@{--} "23"},
						"25", {\ar@{--} "24"},
						"26", {\ar@{--} "25"},
						"27", {\ar@{--} "26"},
						"28", {\ar@{--} "27"},
						"29", {\ar@{--} "28"},
						"15", {\ar@{--} "14"},
						"16", {\ar@{--} "15"},
						"17", {\ar@{--} "16"},
						"18", {\ar@{--} "17"},
						"19", {\ar@{--} "18"},
					\end{xy}%
				}
			\caption{The Auslander--Reiten quiver of $\ACM \X$ for type $(2,2,2,4)$. A vertex labeled $\langle i,j \rangle$ denotes the $2$-extension bundle  $E\langle i\x_4 \rangle(j\x_4)$ for $0\le i \le 2$ and $j\in\Z$.
				The four line bundles $\OO(\x)$ and $\OO(\x+\bar{x}_{ab})$,
				where $\bar{x}_{ab}:=\c-\x_a-\x_b$ for $1\le a<b\le3$,
				occupy the same position, labelled $(\x)$ or $\bullet$ when
				$\x$ is unspecified. Every arrow starting or ending at  vertex $(\x)$ or $\bullet$ represents four arrows in the actual Auslander--Reiten quiver (each one connected to one of the $\OO(\x)$ and $\OO(\x+\bar{x}_{ab})$) }
			\label{The quiver of ACM X(2,2,2,4)}
				\end{figure}	
	
			By \cite[Theorem~4.53]{HIMO}, the subcategory 
			\[
			\mathscr C'
			=
			\add\{\langle 0,2k \rangle, \langle 2,2k \rangle, \,\OO(\x)\mid k\in\Z,\ \x\in\L\}
			\]
			is a $2$-cluster tilting subcategory of $\ACM\X$. Indeed, by \cite[Theorem~6.2]{CRW},
			\[
			T=\langle0,0\rangle\oplus\langle2,0\rangle
			\oplus\langle0,2\rangle
			\]
			is a tilting object in $\underline{\ACM}\,\X$ with
			$\gl\underline{\End}(T)=2$. By Proposition \ref{susp.ext.}, we have
			$$\langle i,j \rangle(\w)=\langle 2-i,i+j-4 \rangle\quad\text{and}\quad\langle i,j \rangle(-\w)=\langle 2-i,i+j+2 \rangle.$$
			This gives
			$\mathscr C'=\add\{T(m\w),\ \OO(\x)
			\mid m\in\Z,\ \x\in\L\},$ so \cite[Theorem~4.53]{HIMO}
			 applies. Its Auslander--Reiten quiver is shown in Figure~\ref{The quiver of C_2} with			
			\begin{figure}
				\centering
				\resizebox{0.95\textwidth}{!}{%
					\begin{xy} 0;<0pt,30pt>:<30pt,0pt>::
						(2,-1.5) *+{\cdots},
						(8,14.5) *+{\cdots},
						(0,0) *+{(\w-2\x_4)} ="00",
						(2,0) *+{\langle 0,-2\rangle} ="10",
						(4,0) *+{\langle 2,-2\rangle} ="20",
						(6,0) *+{(-3\x_4)} ="30",
						(2,2) *+{(\w-\x_4)} ="11",
						(6,2) *+{(-2\x_4)} ="31",
						(2,4.5) *+{(\w)} ="02",
						(4,4.5) *+{\langle 0,0\rangle} ="12",
						(6,4.5) *+{\langle 2,0\rangle} ="22",
						(8,4.5) *+{(-\x_4)} ="32",
						(4,6.5) *+{(\w+\x_4)} ="13",
						(8,6.5) *+{(0)} ="33",
						(4,9) *+{(\w+2\x_4)} ="04",
						(6,9) *+{\langle 0,2\rangle} ="14",
						(8,9) *+{\langle 2,2\rangle} ="24",
						(10,9) *+{(\x_4)} ="34",
						(6,11) *+{(\w+3\x_4)} ="15",
						(10,11) *+{(2\x_4)} ="35",
						(6,13) *+{(\w+4\x_4)} ="06",
						(8,13) *+{\langle 0,4\rangle} ="16",
						(10,13) *+{\langle 2,4\rangle} ="26",
						"00", {\ar "10"},
						"10", {\ar "20"},
						"20", {\ar "30"},
						"20", {\ar@/^0.7pc/ "12"},
						"06", {\ar "16"},
						"16", {\ar "26"},
						"24", {\ar@/^0.7pc/ "16"},
						"35", {\ar "26"},
						"02", {\ar "12"},
						"12", {\ar "22"},
						"22", {\ar "32"},
						"04", {\ar "14"},
						"14", {\ar "24"},
						"24", {\ar "34"},
						"10", {\ar "11"},
						"30", {\ar "31"},
						"12", {\ar "13"},
						"22", {\ar@/^0.7pc/"14"},
						"32", {\ar "33"},
						"14", {\ar "15"},
						"15", {\ar "06"},
						"34", {\ar "35"},
						"13", {\ar@<-.3ex> "33"},
						"13", {\ar@<.3ex> "33"},
						"11", {\ar "02"},
						"31", {\ar "22"},
						"11", {\ar@<-.3ex> "31"},
						"11", {\ar@<.3ex> "31"},
						"13", {\ar "04"},
						"33", {\ar "24"},
						"15", {\ar@<-.3ex> "35"},
						"15", {\ar@<.3ex> "35"},
						"30", {\ar@<-.3ex> "04"},
						"30", {\ar@<.3ex> "04"},
						"32", {\ar@<-.3ex> "06"},
						"32", {\ar@<.3ex> "06"},
					\end{xy}%
				}
			\caption{The Auslander--Reiten quiver of $\mathscr C'$}
				\label{The quiver of C_2}
			\end{figure}	\noindent	
		the following conventions:
		for a $2$-extension bundle $U$, the situations				
		\[
		\begin{xy} 0;<50pt,0pt>:<0pt,-16pt>::
			(0,-1.25) *+{\OO(\x)} ="01",
			(0,0) *+{\OO(\x+\bar{x}_{12})} ="02",
			(0,1.25) *+{\OO(\x+\bar{x}_{13})} ="03",
			(0,2.5) *+{\OO(\x+\bar{x}_{23})} ="04",
			(1.2,0.75) *+{U} ="1",
			(3.4,-1.25) *+{\OO(\x)} ="11",
			(3.4,0) *+{\OO(\x+\bar{x}_{12})} ="12",
			(3.4,1.25) *+{\OO(\x+\bar{x}_{13})} ="13",
			(3.4,2.5) *+{\OO(\x+\bar{x}_{23})} ="14",
			(2.2,0.75) *+{U} ="2",
			"01", {\ar "1"},
			"02", {\ar "1"},
			"03", {\ar "1"},
			"04", {\ar "1"},
			"2", {\ar "11"},
			"2", {\ar "12"},
			"2", {\ar "13"},
			"2", {\ar "14"},
		\end{xy}
		\]				
		are written simply, respectively, as				
		\[
		\begin{xy} 0;<50pt,0pt>:<0pt,-16pt>::
			(4,0) *+{(\x)} ="4",
			(5,0) *+{U} ="5",
			(6.5,0) *+{U} ="6",
			(7.5,0) *+{(\x);} ="7",
			"4", {\ar "5"},
			"6", {\ar "7"},
		\end{xy}
		\]				
		for arrows between line bundles, the situations			
		\[
		\resizebox{0.98\textwidth}{!}{%
			\begin{xy} 0;<50pt,0pt>:<0pt,-16pt>::
				(4,-1.25) *+{\OO(\x)} ="21",
				(4,0) *+{\OO(\x+\bar{x}_{12})} ="22",
				(4,1.25) *+{\OO(\x+\bar{x}_{13})} ="23",
				(4,2.5) *+{\OO(\x+\bar{x}_{23})} ="24",
				(5.7,-1.25) *+{\OO(\x+\x_4)} ="31",
				(5.7,0) *+{\OO(\x+\bar{x}_{12}+\x_4)} ="32",
				(5.7,1.25) *+{\OO(\x+\bar{x}_{13}+\x_4)} ="33",
				(5.7,2.5) *+{\OO(\x+\bar{x}_{23}+\x_4)} ="34",
				(7.3,-1.25) *+{\OO(\x)} ="41",
				(7.3,0) *+{\OO(\x+\bar{x}_{12})} ="42",
				(7.3,1.25) *+{\OO(\x+\bar{x}_{13})} ="43",
				(7.3,2.5) *+{\OO(\x+\bar{x}_{23})} ="44",
				(9.5,-1.25) *+{\OO(\x+\x_3)} ="51",
				(9.5,0) *+{\OO(\x+\bar{x}_{12}+\x_3)} ="52",
				(9.5,1.25) *+{\OO(\x+\bar{x}_{13}+\x_3)} ="53",
				(9.5,2.5) *+{\OO(\x+\bar{x}_{23}+\x_3)} ="54",
				"21", {\ar "31"},
				"22", {\ar "32"},
				"23", {\ar "33"},
				"24", {\ar "34"},
				"41", {\ar "51"},
				"41", {\ar "53"},
				"41", {\ar "54"},
				"42", {\ar "52"},
				"42", {\ar "53"},
				"42", {\ar "54"},
				"43", {\ar "51"},
				"43", {\ar "52"},
				"43", {\ar "53"},
				"44", {\ar "51"},
				"44", {\ar "52"},
				"44", {\ar "54"},
			\end{xy}%
		}
		\]				
		are written simply, respectively, as				
		\[
		\begin{xy} 0;<50pt,0pt>:<0pt,-16pt>::
			(4,0) *+{(\x)} ="4",
			(5.3,0) *+{(\x+\x_4)} ="5",
			(6.7,0) *+{(\x)} ="6",
			(8,0) *+{(\x+\x_3).} ="7",
			"4", {\ar "5"},
			"6", {\ar@<-.2ex> "7"},
			"6", {\ar@<.2ex> "7"},
		\end{xy}
		\]
		
		Let $\mathscr X$ be the $2$-exact structure on $\mathscr C'$
		induced by $(\ACM\X,\mathscr E)$ as in Theorem \ref{n-exact from n-CT}. For any $0\le\ell\le2$ and
		$j\in\mathbb Z$,	 there are
		$\mathscr E$-conflations
		\[
		0\rightarrow\langle\ell,j\rangle
		\rightarrow I_{\ell,j}
		\rightarrow\langle2-\ell,j+\ell+1\rangle
		\rightarrow0
		\]
		by \cite[Theorem~4.4 and Proposition~4.6]{CRW}, where $I_{\ell,j}$ is the direct sum of line bundles represented by the two positions $((j-1)\x_4)$ and $(\w+(j+\ell+1)\x_4)$, that is,  
		\[
		\begin{aligned}
			I_{\ell,j}={}&\OO((j-1)\x_4)
			\oplus\bigoplus_{1\le i\le 3}\OO((j+\ell)\x_4-\x_i)\\
			&\oplus\bigoplus_{1\le i\le 3}\OO(\w+j\x_4+\x_i)
			\oplus\OO(\w+(j+\ell+1)\x_4).
		\end{aligned}
		\]
		Consequently, $\mho_{\mathscr E}^2(\langle\ell,2k\rangle)
		\simeq\langle\ell,2k+4\rangle$ for $\ell=0,2$ 
		and hence $\mho_{\mathscr E}^{2}(\mathscr C')\subset\mathscr C'$.
		By Theorem~\ref{n-exact from n-CT}, 
		$(\mathscr C',\mathscr X)$ is Frobenius $2$-exact, whose
		subcategory of projective-injective objects is $\mathscr L$.		
		By Corollary~\ref{F removable of ACM X} and Theorem \ref{Compatibility with cluster tilting}(a), $\mathscr F$
		is removable in $(\mathscr C',\mathscr X)$.  Thus	
		\[
		\mathscr C'/[\mathscr F]
		=\add\{\langle0,2k\rangle,\, \langle2,2k\rangle,\,
		\OO(a\x_4)\mid k,a\in\Z\}
		\]
		is  Frobenius
		$2$-exact with $2$-exact structure $\mathscr X_{\mathscr F}$ by Theorem~\ref{main thm2}. Its 	
	 subcategory of  projective-injective objects  is
		$
		\mathscr L/[\mathscr F]=\add\{\OO(a\x_4)\mid a\in\Z\}.
		$
		Its quiver is obtained by removing the vertices corresponding to fading
		line bundles and all arrows starting or ending at these vertices;
		see Figure~\ref{The quiver of  C_2F}.
		\begin{figure}[htbp]
			\centering
			\resizebox{0.95\textwidth}{!}{%
				\begin{xy} 0;<0pt,28pt>:<30pt,0pt>::
					(2,-1.5) *+{\cdots},
					(8,14.5) *+{\cdots},
					(2,0) *+{\langle 0,-2\rangle} ="10",
					(4,0) *+{\langle 2,-2\rangle} ="20",
					(6,0) *+{-3\x_4} ="30",
					(6,2) *+{-2\x_4} ="31",
					(4,4.5) *+{\langle 0,0\rangle} ="12",
					(6,4.5) *+{\langle 2,0\rangle} ="22",
					(8,4.5) *+{-\x_4} ="32",
					(8,6.5) *+{0} ="33",
					(6,9) *+{\langle 0,2\rangle} ="14",
					(8,9) *+{\langle 2,2\rangle} ="24",
					(10,9) *+{\x_4} ="34",
					(10,11) *+{2\x_4} ="35",
					(8,13) *+{\langle 0,4\rangle} ="16",
					(10,13) *+{\langle 2,4\rangle} ="26",
					"10", {\ar "20"},
					"20", {\ar "30"},
					"20", {\ar@/^0.7pc/ "12"},
					"16", {\ar "26"},
					"24", {\ar@/^0.7pc/ "16"},
					"35", {\ar "26"},
					"12", {\ar "22"},
					"22", {\ar "32"},
					"14", {\ar "24"},
					"24", {\ar "34"},
					"30", {\ar "31"},
					"22", {\ar@/^0.7pc/ "14"},
					"32", {\ar "33"},
					"34", {\ar "35"},
					"31", {\ar "22"},
					"33", {\ar "24"},
				\end{xy}%
			}
			\caption{The quiver of $\mathscr C'/[\mathscr F]$}
			\label{The quiver of  C_2F}
		\end{figure} 
		
		One can calculate from Figure \ref{The quiver of ACM X(2,2,2,4)} that  there are two $\mathscr{E}$-conflations
		\begin{align*}\label{two E-conflations}
		&	0\to \langle 0,0 \rangle \to \langle 2,0 \rangle \oplus F_0 \to \langle 1,1 \rangle \to 0,\\
		&	0\to \langle 1,1 \rangle \to \langle 0,2 \rangle \oplus \OO\oplus F_1 \to \langle 2,2 \rangle \to 0,
		\end{align*} 
		where $F_0$ is the direct sum of line bundles in the position $(\w+\x_4)$, while $\OO\oplus F_1$ is the direct sum of line bundles in the position $(0)$, that is, 
		$$F_0=\OO(\w+\x_4)\oplus\bigoplus_{1\le i\le 3}\OO(-\x_i)\quad\text{and}\quad 
		F_1=\bigoplus_{1\le i\le 3}\OO(\w+\x_4+\x_i).$$
		Splicing these two $\mathscr{E}$-conflations yields an
		$\mathscr E$-acyclic complex
		$$0\to\langle 0,0 \rangle \to \langle 2,0 \rangle \oplus F_0 \to \langle 0,2 \rangle \oplus \OO \oplus F_1 \to \langle 2,2 \rangle \to 0. $$ Since all its terms belong to $\mathscr C'$, it is an
		$\mathscr X$-conflation. Since $F_0,F_1\in \mathscr{F}$, its image in the quotient $\mathscr{C}'/[\mathscr{F}]$ is isomorphic to the $\mathscr{X}_{\mathscr{F}}$-conflation
		$$\langle 0,0 \rangle \to\langle 2,0 \rangle \to\langle 0,2 \rangle\oplus\OO \to\langle 2,2 \rangle.$$
			By Remark \ref{canonical equivalence}, the quotient functor $\pi_\mathscr{F}$ induces a canonical equivalence of 
		$4$-angulated categories
		$$\underline{\mathscr{C}'}=\mathscr{C}'/[\mathscr{L}]\simeq \underline{\mathscr{C}'/[\mathscr{F}]},$$
		see Figure~\ref{The quiver of  stable C_2}.
		\begin{figure}[htbp]
		\centering
		\resizebox{0.95\textwidth}{!}{%
			\begin{xy} 0;<0pt,28pt>:<30pt,0pt>::
				(2,-1.5) *+{\cdots},
				(8,14.5) *+{\cdots},
				(2,0) *+{\langle 0,-2\rangle} ="10",
				(4,0) *+{\langle 2,-2\rangle} ="20",
				(4,4.5) *+{\langle 0,0\rangle} ="12",
				(6,4.5) *+{\langle 2,0\rangle} ="22",
				(6,9) *+{\langle 0,2\rangle} ="14",
				(8,9) *+{\langle 2,2\rangle} ="24",
				(8,13) *+{\langle 0,4\rangle} ="16",
				(10,13) *+{\langle 2,4\rangle} ="26",
				"10", {\ar "20"},
				"20", {\ar@/^0.7pc/ "12"},
				"12", {\ar "22"},
				"22", {\ar@/^0.7pc/ "14"},
				"14", {\ar "24"},
				"24", {\ar@/^0.7pc/ "16"},
				"16", {\ar "26"},
			\end{xy}%
		}
		\caption{The quiver of the $4$-angulated category $\underline{\mathscr C'}$}
		\label{The quiver of  stable C_2}
		\end{figure}
	 The suspension functor $\Sigma$ satisfies $\Sigma(\langle\ell,2k\rangle)
		\simeq\langle\ell,2k+4\rangle$ for $\ell=0,2$. Moreover, we have the $4$-angle
		\begin{equation}\label{4angle}
			\langle 0,0 \rangle \to\langle 2,0 \rangle \to\langle 0,2 \rangle \to\langle 2,2 \rangle\to\langle 0,4 \rangle\simeq \Sigma\langle 0,0 \rangle.
		\end{equation}		
		For each $2$-extension bundle $\langle\ell,2k\rangle$,
		with $\ell=0,2$ and $k\in\mathbb Z$,
		a $4$-angle with this first term is obtained by
		iterated left and right rotations of \eqref{4angle}.
		\end{example}

			\vskip 5pt
		\noindent {\scriptsize   \noindent  Weikang Weng\\
			School of Mathematical Sciences, \\
			Xiamen University, Xiamen, 361005, Fujian, PR China.\\
			E-mail: 
			wkweng@stu.xmu.edu.cn\\ }
		\vskip 3pt
		
	\end{document}